\documentclass[12pt,reqno]{amsart}
\usepackage{amsmath}
\usepackage{amsfonts}
\usepackage{amssymb}
\usepackage{amsthm}
\usepackage{amscd}
\usepackage{amsbsy}
\usepackage{fancyhdr}
\usepackage[usenames,dvipsnames,svgnames,x11names,hyperref]{xcolor}
\usepackage{geometry}
\usepackage{graphicx}
\usepackage{hyperref}
\usepackage{showidx}
\hypersetup{backref=true,
pagebackref=true,
hyperindex=true,
colorlinks=true,
breaklinks=true,
urlcolor=NavyBlue,
linkcolor=Fuchsia,
bookmarks=true,
bookmarksopen=false,
filecolor=black,
citecolor=ForestGreen,
linkbordercolor=red
}
\newtheorem{theorem}{Theorem}[section]
\theoremstyle{plain}

\newtheorem{corollary}{Corollary}[section]

\newtheorem{lemma}{Lemma}[section]

\newtheorem{remark}{Remark}[section]

\numberwithin{equation}{section}
\newtheorem{theorema}{Theorem}[section]

\allowdisplaybreaks
\AtBeginDocument{\hypersetup{pdfborder={0 0 0.01}}}
\begin{document}
\title[Sharp Stability of the CKN and Weighted Poincar\'e Inequalities]{Caffarelli-Kohn-Nirenberg and Weighted Gaussian Poincar\'e Inequalities: a complete characterization of sharp $L^2$ stability and $L^p$ extensions}
\author{Anh Xuan Do}
\address{Anh Xuan Do: Department of Mathematics, University of Connecticut, Storrs, CT
06269, USA}
\email{anh.do@uconn.edu}
\author{Nguyen Lam}
\address{Nguyen Lam: School of Science and the Environment, Grenfell Campus, Memorial
University of Newfoundland, Corner Brook, NL A2H5G4, Canada}
\email{nlam@mun.ca}
\author{Guozhen Lu}
\address{Guozhen Lu: Department of Mathematics, University of Connecticut, Storrs, CT
06269, USA}
\email{guozhen.lu@uconn.edu}
\author{Van Hoang Nguyen}
\address{Van Hoang Nguyen: Department of Mathematics, FPT University, Ha Noi, Vietnam}
\email{vanhoang0610@yahoo.com,~hoangnv47@fe.edu.vn}
\date{\today}

\begin{abstract}
We introduce a new family of weighted Gaussian $L^2$-Poincaré-type inequalities with explicit sharp constants, optimizers, and corresponding sharp $L^2$-gradient stability estimates. This family substantially extends the classical Gaussian Poincaré inequality. Owing to the singular nature of the weights involved, standard approaches to classical Gaussian Poincaré inequalities do not apply. To overcome this difficulty, we develop a new method based on generalized Laguerre polynomial expansions, spherical harmonic decompositions, and a Kelvin-type transform.

As an application, we completely characterize the stability of the $L^2$-Caffarelli--Kohn--Nirenberg (CKN) inequalities by establishing sharp stability estimates, together with the stability of the stability inequality results, throughout the entire parameter range. Previous results were available only in a few special cases. We further establish weighted $L^p$
-Poincaré inequalities for all $p>1$, and derive stability estimates for the $L^p$-CKN  inequalities for $p\geq2$ throughout the full parameter regime in which sharp constants and optimizers are known. In contrast, earlier $L^p$
 results were restricted to highly limited parameter ranges.

\end{abstract}
\maketitle
\section{Introduction}

In \cite{BL85}, Brezis and Lieb raised a fundamental question regarding the sharp Sobolev inequality in $W^{1,2}(\mathbb{R}^N)$: can the \emph{deficit}--namely, the difference between the energy and the optimal constant times the critical $L^{2^*}$-norm be bounded below by a positive quantity comparable to the square of the distance from $u$ to the manifold of optimizers? 
This observation gave rise to a broad and influential research program on the stability of functional and geometric inequalities. Understanding such stability is crucial, as it not only refines classical sharp inequalities by revealing how close a function is to achieving equality but also provides deep insights into the underlying geometry, rigidity phenomena, and concentration behavior of extremal sequences. Over the past decades, this perspective has become a central theme in analysis, inspiring numerous quantitative results and new techniques across different settings. For interested readers, we refer to \cite{BWW03, BGKM25, BGKM25b, BDNN20, BFR24, CF13, CFW13, 
CLT23, CFMP09, DoLL26, DN25, DN26, FIL16, FJ1, FJ2, FN19, FZ22, LLR26, LW99, VHN16, 
VHN19}; this list is far from exhaustive.

Brezis and Lieb's question was solved affirmatively by Bianchi and Egnell \cite{BE91}, who established the stability estimate
\begin{equation}\label{StabilityS}
\int_{\mathbb{R}^{N}} |\nabla u|^{2}\,dx
- S_{N}\!\left(
\int_{\mathbb{R}^{N}} |u|^{\frac{2N}{N-2}}\,dx
\right)^{\!\frac{N-2}{N}}
\ge c_{BE}\!
\inf_{U \in E_{Sob}} \!
\int_{\mathbb{R}^{N}} |\nabla(u - U)|^{2}\,dx,
\end{equation}
where $S_{N}$ is the optimal Sobolev constant, $E_{Sob}$ denotes the set of extremal functions, and $c_{BE}>0$ is a universal stability constant. This result shows that the Sobolev deficit controls the distance in the gradient norm to the manifold of optimizers and is optimal in the underlying metrics and exponents.

Despite extensive research, the value and attainability of stability constants have long remained elusive. Recently, Dolbeault, Esteban, Figalli, Frank, and Loss \cite{DEFFL} initiated a systematic analysis of the Bianchi–Egnell constant, deriving sharp lower bounds for $c_{BE}$ as $N \to \infty$ and establishing global stability for the Gaussian log-Sobolev inequality via a gradient flow approach. More recently, Chen, Lu and Tang \cite{CLT24, CLT242, CLT243} obtained explicit lower bounds for the stability constants in Hardy–Littlewood–Sobolev and higher-order fractional Sobolev inequalities, leading to global stability for Beckner’s log-Sobolev inequality on the sphere. In a related development, the authors in \cite{CLTW} established optimal stability of the Sobolev inequality on the Heisenberg group. Since classical rearrangement and flow arguments fail in that setting, they introduced a novel approach based on the CR Yamabe flow to pass from local to global stability.

The original motivation of our paper is to study sharp stability estimates for the following $L^{2}$-Caffarelli–Kohn–Nirenberg (CKN) inequality:
\begin{equation}
\left( \int_{\mathbb{R}^{N}} \frac{|\nabla u|^{2}}{|x|^{2b}}\,dx \right)^{\frac{1}{2}}
\left( \int_{\mathbb{R}^{N}} \frac{|u|^{2}}{|x|^{2a}}\,dx \right)^{\frac{1}{2}}
\geq C(N,a,b)
\left( \int_{\mathbb{R}^{N}} \frac{|u|^{2}}{|x|^{a+b+1}}\,dx \right),
\quad
u \in C_{0}^{\infty}(\mathbb{R}^{N} \setminus \{0\}).
\label{CKN}
\end{equation}
We note that the power $a+b+1$ in the right-hand integral is essential: it is precisely the exponent dictated by the scaling invariance of \eqref{CKN}, and any other power would break this invariance and prevent the inequality from holding with a uniform constant. 

This family \eqref{CKN} encompasses several celebrated inequalities, such as the Heisenberg uncertainty principle (corresponding to $a=-1$, $b=0$), the hydrogen uncertainty principle ($a=b=0$), and the classical Hardy inequality ($a=1$, $b=0$), each of which plays a fundamental role in quantum mechanics. Moreover, \eqref{CKN} is a special case of the more general Caffarelli–Kohn–Nirenberg inequality, introduced by Caffarelli, Kohn, and Nirenberg in \cite{CKN}. Remarkably, many classical functional inequalities—including the Gagliardo–Nirenberg, Sobolev, Hardy–Sobolev, and Nash inequalities—can be recovered within the broader CKN framework. The literature on this topic is extensive and continually expanding; see, for example, \cite{BCFGG20, CKN, CFL23, DY23, DDDL23, DDLL25, DELT09, Dong18, DN23, DN26, Fly20, FLL21, FLL22, GHS25, HT25, KRS22, LL17, VHN15, VHN21, VHN22}, to name just a few.

The optimal constant $C(N,a,b) > 0$ associated with the $L^{2}$-CKN inequality \eqref{CKN} was first investigated by Costa \cite{Cos08} in a restricted parameter regime via the expanding-the-square method. This analysis was subsequently extended to the full admissible range of parameters by Catrina and Costa \cite{CC09}, using spherical harmonic decomposition and a Kelvin-type transform. More recently, alternative direct proofs of the sharp constant $C(N,a,b)$ valid over the entire parameter range, along with a complete characterization of all optimizers, have been established in \cite{CFL21}.
More precisely, the main findings in \cite{CC09, CFL21, Cos08} can be summarized as follows:

\begin{theorema}
\label{TA}
For all admissible parameters $(a,b)$, one has
\[
C(N,a,b)
= \max \left\{
\frac{|N-(a+b+1)|}{2}, \,
\frac{|N-(3b - a + 3)|}{2}
\right\}.
\]
Moreover, the sharp constant and the corresponding extremal functions are characterized as follows:

\begin{enumerate}
\item If $(a,b) \in \mathcal{A}$, then
\[
C(N,a,b) = \frac{|N-(a+b+1)|}{2},
\]
and equality in \eqref{CKN} is attained precisely by functions of the form
\[
u(x) = D \exp\!\left( \frac{t |x|^{\,b+1-a}}{b+1-a} \right),
\]
where $D \neq 0$, $t < 0$ in $\mathcal{A}_{1}$, and $t > 0$ in $\mathcal{A}_{2}$.

\item If $(a,b) \in \mathcal{B}$, then
\[
C(N,a,b) = \frac{|N-(3b - a + 3)|}{2},
\]
and equality in \eqref{CKN} is achieved by functions of the form
\[
u(x) = D\, |x|^{\,2(b+1)-N} \exp\!\left( \frac{t |x|^{\,b+1-a}}{b+1-a} \right),
\]
where $D \neq 0$, $t > 0$ in $\mathcal{B}_{1}$, and $t < 0$ in $\mathcal{B}_{2}$.

\item The only parameter values for which the best constant is not achieved correspond to the line $a=b+1$, where
\[
C(N,b+1,b) = \frac{|N - 2(b+1)|}{2}.
\]
\end{enumerate}
\end{theorema}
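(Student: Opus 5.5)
The plan is to reduce \eqref{CKN} to a family of one-dimensional problems. I would use a spherical harmonic decomposition, handle each piece with an expanding-the-square identity, and use a Kelvin-type change of the unknown to pass between the two regimes $\mathcal A$ and $\mathcal B$.

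\textbf{Step 1: expanding the square.} For $u\in C_0^\infty(\mathbb{R}^N\setminus\{0\})$ and $t\in\mathbb{R}$, I would start from
\[
0\le \int_{\mathbb{R}^N}\Bigl|\,|x|^{-b}\nabla u+t\,|x|^{-a}u\,\tfrac{x}{|x|}\Bigr|^2dx .
\]
Expanding, the cross term is $t\int \nabla(u^2)\cdot x\,|x|^{-a-b-1}\,dx$. Integrating by parts turns it into $-t\,(N-a-b-1)\int u^2|x|^{-a-b-1}\,dx$. This gives
\[
\int\frac{|\nabla u|^2}{|x|^{2b}}+t^2\int\frac{u^2}{|x|^{2a}}\ge t\,(N-a-b-1)\int\frac{u^2}{|x|^{a+b+1}} .
\]
Optimizing in $t$, or equivalently using the scaling invariance together with AM--GM, yields \eqref{CKN} with constant $|N-(a+b+1)|/2$. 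Equality forces $\nabla u=-t|x|^{b-a}u\,x/|x|$, that is $u=D\exp\bigl(-t|x|^{b+1-a}/(b+1-a)\bigr)$ after renaming the sign of $t$. Requiring the three integrals to be finite near $0$ and $\infty$ fixes the sign of $t$, and this is what separates $\mathcal A_1$ from $\mathcal A_2$.

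\textbf{Step 2: the other constant and the Kelvin-type transform.} Set $u(x)=|x|^{2(b+1)-N}v(x)$. A direct computation, with one integration by parts absorbing the cross term $\nabla(|x|^{\gamma})\cdot\nabla v$, should show the following. The three weighted integrals for $u$ become the analogous integrals for $v$ with a new pair $(a',b')$, satisfying $N-(a'+b'+1)=\pm\bigl(N-(3b-a+3)\bigr)$ and $b'+1-a'=\pm(b+1-a)$. Applying Step 1 to $v$ then gives the constant $|N-(3b-a+3)|/2$, with optimizers $|x|^{2(b+1)-N}\exp(\cdots)$, which is the region $\mathcal B$.

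\textbf{Step 3: the sharp constant is the maximum.} Write $u=\sum_k f_k(r)Y_k(\sigma)$. The denominator terms decouple. The $k$-th gradient term gains $\lambda_k\int f_k^2 r^{-2b-2}$ with $\lambda_k=k(k+N-2)$. Since the inequality is not linear in the three quantities, I would work with the equivalent linear family from Step 1 in the form $A+t^2B-tcC\ge0$ for the optimal $t$. Each mode then satisfies a one-dimensional inequality. For $k\ge1$ the Hardy-type term $\lambda_k\int f_k^2r^{-2b-2}$ has to be controlled, for instance by the weighted Hardy inequality in the log variable $s=\log r$, so that it does not lower the constant below the radial one.

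The \textbf{main obstacle} is this mode analysis. One must show that the infimum over all modes equals exactly $\max\{|N-(a+b+1)|/2,\;|N-(3b-a+3)|/2\}$, and that the radial mode, possibly after the Kelvin transform, realizes it. The approach I would try first is the substitution $r=e^s$, $f_k=r^{\alpha}g(s)$, with $\alpha$ chosen to make the weights exponential. This turns each mode into a constant-coefficient quadratic form in $g,g'$ to be compared via Fourier or Plancherel. The two candidate constants then appear as the two branches coming from the choice of $\alpha$, which is the mechanism behind the Kelvin transform.

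\textbf{Step 4: the line $a=b+1$.} Here the exponentials degenerate, since $b+1-a=0$. The inequality reduces to the weighted Hardy inequality $\int|x|^{-2b}|\nabla u|^2\ge\frac{(N-2b-2)^2}{4}\int|x|^{-2b-2}u^2$, whose constant is sharp but not attained. To show sharpness I would use the family $|x|^{-(N-2b-2)/2}$ truncated logarithmically; non-attainment follows because the equality case of Step 1 would force a pure power, which is not in the energy space.
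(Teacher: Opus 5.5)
The paper does not prove this statement; it quotes it from \cite{Cos08, CC09, CFL21}. Your Steps 1, 2 and 4 are in substance the direct, identity-based route the paper attributes to \cite{CFL21}. They are also exactly the identities the paper uses in Section 3. Theorem~\ref{L2CKNI} with $\mathbf{X}=\mp|x|^{b-a}\frac{x}{|x|}$ is your Step 1. With $\mathbf{X}=\bigl(\pm|x|^{b-a}-(N-2b-2)|x|^{-1}\bigr)\frac{x}{|x|}$ it is your conjugation $u=|x|^{2(b+1)-N}v$ followed by Step 1.

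Your Step 2 goes through for arbitrary $v$, not only radial ones. With $\gamma=2(b+1)-N$ one has $N+2\gamma-2b-2=\gamma$, so the cross term $-\gamma(N+2\gamma-2b-2)\int|x|^{2\gamma-2b-2}v^2$ cancels the $\gamma^2$ term exactly. You land on $(a',b')=(a+N-2b-2,\,N-2-b)$, with $b'+1-a'=b+1-a$ and $N-(a'+b'+1)=-\bigl(N-(3b-a+3)\bigr)$. This map exchanges $\mathcal B_1$ with $\mathcal A_2$ and $\mathcal B_2$ with $\mathcal A_1$.

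The flaw is Step 3, which you single out as the main obstacle: it is not needed, and the mechanism you sketch for it would not work.

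\textbf{Why Step 3 is unnecessary.} Steps 1 and 2 hold for every $u\in C_0^\infty(\mathbb{R}^N\setminus\{0\})$ and every $(a,b)$. So you already have $C(N,a,b)\ge\frac12\max\{|X|,|Y|\}$, with $X=N-(a+b+1)$ and $Y=N-(3b-a+3)$. The reverse inequality comes from the equality cases alone, since $X^2-Y^2=4(b+1-a)(N-2-2b)$. Thus $|X|\ge|Y|$ on $\mathcal A$, where the Step 1 extremal is admissible, and $|Y|\ge|X|$ on $\mathcal B$, where the Step 2 extremal is admissible. On the line $a=b+1$, your Step 4 supplies a minimizing sequence. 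The sandwich closes with no mode analysis; once you have the two global identities, the spherical-harmonic route of \cite{CC09} buys nothing here.

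\textbf{Why the sketched mechanism fails.} The term $\lambda_k\int f_k^2r^{-2b-2}\ge0$ only raises the energy, so there is nothing to control. Moreover, $r=e^s$, $f=r^\alpha g$ cannot give a constant-coefficient form. The three radial weights become $e^{(N-2-2b)s}$, $e^{(N-2a)s}$ and $e^{(N-1-a-b)s}$. The factor $r^\alpha$ shifts all three rates by the same $2\alpha$, so they can be made constant simultaneously only if $a=b+1$.

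\textbf{What actually has to be written out.} Admissibility of the extremals carries the entire upper bound and the ``precisely'' claims. You must show the extremals lie in the closure of $C_0^\infty(\mathbb{R}^N\setminus\{0\})$ for the three weighted norms. You must also extend the square-remainder identity of Step 1 to that closure, so that any optimizer satisfies $\nabla u=-t|x|^{b-a}u\,\frac{x}{|x|}$.

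For example, in $\mathcal A_1$ all three integrals are finite near the origin because $2a<N$ and $a+b+1<N$, both of which follow from $a<b+1\le N/2$. Cutting off at scale $\varepsilon$ costs $O(\varepsilon^{N-2-2b})$ in the gradient term, which tends to $0$ only when $b<\frac{N-2}{2}$. On the borderline $b=\frac{N-2}{2}$, where $\mathcal A_1$ meets $\mathcal B_2$ and $|x|^{2(b+1)-N}\equiv 1$, you need a logarithmic cutoff. The same issue arises symmetrically at infinity in $\mathcal A_2$.
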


The sets involved above are defined by
\[
\left\{
\begin{array}{ll}
\mathcal{A}_{1} := \{ (a,b) \mid b+1-a > 0,\; b \le (N-2)/2 \}, \\[5pt]
\mathcal{A}_{2} := \{ (a,b) \mid b+1-a < 0,\; b \ge (N-2)/2 \}, \\[5pt]
\mathcal{A} := \mathcal{A}_{1} \cup \mathcal{A}_{2}, \\[5pt]
\mathcal{B}_{1} := \{ (a,b) \mid b+1-a < 0,\; b \le (N-2)/2 \}, \\[5pt]
\mathcal{B}_{2} := \{ (a,b) \mid b+1-a > 0,\; b \ge (N-2)/2 \}, \\[5pt]
\mathcal{B} := \mathcal{B}_{1} \cup \mathcal{B}_{2}.
\end{array}
\right.
\]

The stability problem for the $L^2$-CKN inequality \eqref{CKN} was first addressed in \cite{MV21} for the particular case $a = -1$ and $b = 0$, corresponding to the Heisenberg Uncertainty Principle (HUP). In their study, McCurdy and Venkatraman employed the concentration–compactness framework to establish the existence of universal constants $C_{1} > 0$ and $C_{2}(N) > 0$ satisfying
\[
\delta(u) \ge C_{1}\!\left(\int_{\mathbb{R}^{N}} |u(x)|^{2}\,dx\right)\! d^{2}(u, E_{HUP}) 
+ C_{2}(N)\, d^{4}(u, E_{HUP}),
\]
where the HUP deficit is given by
\[
\delta(u)
:= \left(\int_{\mathbb{R}^{N}} |\nabla u|^{2}\,dx\right)
\left(\int_{\mathbb{R}^{N}} |x|^{2} |u|^{2}\,dx\right)
- \frac{N^{2}}{4}\!\left(\int_{\mathbb{R}^{N}} |u|^{2}\,dx\right)^{2}.
\]
Here,
\[
E_{HUP} := \{\alpha e^{-\beta |x|^{2}} : \alpha \in \mathbb{R},\, \beta > 0\}
\]
denotes the family of Gaussian optimizers of the HUP, and the distance to a set $A$ in $L^{2}(\mathbb{R}^{N})$ is defined by
\[
d(u, A) := \inf_{v \in A} \|u - v\|_{2}.
\]
This result shows that a small deficit $\delta
(u) \approx 0$ implies that $u$ lies close, in the $L^{2}$ sense, to a 
Gaussian of the form $\alpha e^{-\beta |x|^{2}}$. A constructive approach was later developed by Fathi \cite{Fat21}, who obtained the same estimate with explicit constants $C_{1} = \tfrac{1}{4}$ and $C_{2} = \tfrac{1}{16}$, though these are not optimal. Eventually, the authors in \cite{CFLL24} established a sharp and fully quantitative stability estimate for the HUP, summarized below.

\begin{theorema}
\label{TB}
For all $u \in C_{0}^{\infty}(\mathbb{R}^{N})$, one has
\[
\bar{\delta}(u)
:= \left(\int_{\mathbb{R}^{N}} |\nabla u|^{2}\,dx\right)^{1/2}
\left(\int_{\mathbb{R}^{N}} |x|^{2} |u|^{2}\,dx\right)^{1/2}
- \frac{N}{2} \int_{\mathbb{R}^{N}} |u|^{2}\,dx
\ge d^{2}(u, E_{HUP}).
\]
Equality holds for nontrivial functions $u \notin E_{HUP}$, and the bound is sharp. As a consequence, \[
\delta(u) \ge N\!\left(\int_{\mathbb{R}^{N}} |u(x)|^{2}\,dx\right)\! d^{2}(u, E_{HUP}) 
+ d^{4}(u, E_{HUP}),
\] and the estimate is optimal.
\end{theorema}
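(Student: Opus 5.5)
We prove Theorem~\ref{TB} in three steps: reduce to a single Gaussian scale, apply the Gaussian Poincar\'e inequality with its spectral gap, and then optimize the scale.

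\textbf{Step 1: reduction to one scale.} Since every quantity involved is $2$-homogeneous in $u$, we may normalize $u$. For $\beta>0$, expand the square
\[
\int_{\mathbb{R}^N}|\nabla u+2\beta x u|^2\,dx=\int|\nabla u|^2+4\beta^2\int|x|^2u^2-2N\beta\int u^2 ,
\]
where we integrated by parts using $\int 2x u\cdot\nabla u=-N\int u^2$. Choose $\beta$ so that $\int|\nabla u|^2=4\beta^2\int|x|^2u^2$, that is, $2\beta=\|\nabla u\|_2/\|xu\|_2$. With this choice, $\int|\nabla u+2\beta xu|^2=4\beta\,\bar\delta(u)$. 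Writing $u=e^{-\beta|x|^2}v$, we get $\nabla u+2\beta xu=e^{-\beta|x|^2}\nabla v$. So it suffices to show
\[
\int|\nabla v|^2e^{-2\beta|x|^2}\,dx\ \ge\ 4\beta\inf_{\alpha}\int|v-\alpha|^2e^{-2\beta|x|^2}\,dx ,
\]
because the right-hand side dominates $4\beta\, d^2(u,E_{HUP})$: we restrict to Gaussians with this fixed $\beta$.

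\textbf{Step 2: Gaussian Poincar\'e inequality.} Rescale by $x=y/(2\sqrt\beta)$. The measure $e^{-2\beta|x|^2}dx$ becomes a multiple of the standard Gaussian measure $\gamma$ with covariance $1/(4\beta)$. The Gaussian Poincar\'e inequality $\int|\nabla v|^2d\gamma\ge \frac{1}{\sigma^2}\int|v-\bar v|^2d\gamma$, with $\sigma^2=1/(4\beta)$, gives exactly the constant $4\beta$, and the infimum is attained at $\alpha=\bar v$. This yields $\bar\delta(u)\ge d^2(u,E_{HUP})$. Equality in Poincar\'e is achieved by the first Hermite polynomials, $v=\alpha+\ell\cdot x$. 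The corresponding $u=(\alpha+\ell\cdot x)e^{-\beta|x|^2}$ lies outside $E_{HUP}$. We still have to check that the balance condition fixing $\beta$ and the minimality over the other Gaussian scales are compatible with these functions. The balance condition holds because the gradient and moment integrals of $x_1e^{-\beta|x|^2}$ have the same Gaussian ratio. Since the infimum over all $\beta'$ is at most the value at the fixed $\beta$ and we already have the lower bound, the inequality is an equality, which shows sharpness.

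\textbf{Step 3: the deficit $\delta$.} Set $A=\|\nabla u\|\,\|xu\|$ and $B=\frac N2\|u\|^2$. Then
\[
\delta=A^2-B^2=(A-B)(A+B)=\bar\delta\,(\bar\delta+2B)\ge d^2\bigl(d^2+N\|u\|_2^2\bigr).
\]
Optimality follows from the same extremals used in Step 2.

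\textbf{Main obstacle.} The delicate point is the equality case in Step 2. A function $u=(\alpha+\ell\cdot x)e^{-\beta|x|^2}$ is a Poincar\'e extremal at scale $\beta$, but its true $L^2$-distance to $E_{HUP}$ might be realized by a different Gaussian width. I would first try odd perturbations, $\alpha=0$. For these, $\langle u,e^{-\beta'|x|^2}\rangle=0$ for every $\beta'$, so $d^2(u,E_{HUP})=\|u\|_2^2$ exactly. This settles the equality without any optimization over $\beta'$.
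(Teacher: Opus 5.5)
Your proof is correct and follows the same route the paper attributes to \cite{CFLL24} and reuses in Section 3. That route is the remainder identity of Theorem~\ref{L2CKNI} at the balanced scale $2\beta=\|\nabla u\|_2/\|xu\|_2$, followed by the Gaussian Poincar\'e inequality \eqref{keyPoin}; for $a=-1$, $b=0$, $N\ge 2$ the paper's constant is $C_{PI}=\min\{4,2\}=2$, attained by $c+\mathbf{d}\cdot x$, which are exactly your $\alpha+\ell\cdot x$.

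One correction: the sentence in Step 2 that derives equality from ``the infimum over all $\beta'$ is at most the value at the fixed $\beta$'' only reproduces $d^2(u,E_{HUP})\le\bar\delta(u)$. So the equality case really rests on your last paragraph: the odd part $\ell\cdot x\,e^{-\beta|x|^2}$ is orthogonal to every radial Gaussian, hence $d^2(u,E_{HUP})=\|\ell\cdot x\,e^{-\beta|x|^2}\|_2^2=\bar\delta(u)$. By the even/odd splitting, this argument also covers the case $\alpha\neq 0$.
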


See also \cite{LLR25} for an improved version, and \cite{LLLS25} for the HUP and its stability on the half-space and orthants.

The approach in \cite{CFLL24} is based on two fundamental ideas. The first is the explicit computation of the remainder term in the Heisenberg uncertainty principle (HUP), and the second is the use of a sharp Gaussian-type Poincaré inequality, which states that for any $\lambda \ne 0$,
\begin{equation}\label{keyPoin}
\int_{\mathbb{R}^{N}} |\nabla u|^{2} e^{-\frac{|x|^{2}}{2|\lambda|^{2}}}\,dx
\ge \frac{1}{|\lambda|^{2}}
\inf_{c \in \mathbb{R}} \int_{\mathbb{R}^{N}} |u - c|^{2} e^{-\frac{|x|^{2}}{2|\lambda|^{2}}}\,dx.
\end{equation}
This Gaussian-type Poincaré inequality constitutes the main analytical tool in \cite{CFLL24}.

Building on this idea, the authors established in  \cite{CFLL24} the stability of the $L^{2}$-CKN inequality \eqref{CKN} under the restricted 

conditions
\[
0 \leq b < \frac{N-2}{2}, \quad a \leq \frac{Nb}{N-2}, \quad \text{and} \quad a + b + 1 = \frac{2bN}{N-2}.
\]
These constraints follow from the necessity of applying a Poincaré inequality for log-concave measures, which is used in place of \eqref{keyPoin}. Using a similar approach,    a stability estimate for the $L^{2}$-CKN inequality \eqref{CKN} was derived in \cite{DFLL23} under the assumptions
\[
\frac{N-2}{2} < b \leq N - 2, \quad \text{and} \quad N(b - a + 3) = 2(3b - a + 3).
\]
Once again, these conditions arise from the requirement to apply a Poincaré inequality for log-concave measures. Recently, in \cite{CT25}, the authors, following the same ideas from \cite{CFLL24} and \cite{DFLL23},  extended some stability estimates for the $L^{2}$-CKN inequality \eqref{CKN} under the assumptions
\[
0 \leq b < \frac{N-2}{2}, \quad \text{and} \quad a = \frac{Nb}{N-2}.
\]
It is worth noting that, in addition to the strict parameter conditions imposed in the aforementioned works, information on the sharp stability constants is also lacking due to the reliance on the Poincaré inequality for log-concave measures.

The first main objective of this paper is to fully establish the stability 
of the $L^{2}$-CKN inequality by deriving stability estimates, with explicit 
sharp stability constants, {\bf for all admissible parameters}, thus removing the 
constraints imposed in \cite{CFLL24, CT25, DFLL23}. More precisely, define the \(L^2\)-CKN deficit by
\begin{equation}\label{L2CKNdeficit}
    \delta_2(u) := \left( \int_{\mathbb{R}^N} \frac{|\nabla u|^2}{|x|^{2b}} \, dx \right)^{\frac{1}{2}} 
    \left( \int_{\mathbb{R}^N} \frac{|u|^2}{|x|^{2a}} \, dx \right)^{\frac{1}{2}} 
    - C(N,a,b) \int_{\mathbb{R}^N} \frac{|u|^2}{|x|^{a + b + 1}} \, dx \geq 0.
\end{equation}
We will then establish the following sharp stability estimates.

\begin{theorem}\label{staL2CKN} Let $N \geq 1$, and $a,b \in \mathbb{R}$. Define
\[
C_{PI}(a,b,N) = \min\left\{ 2(1+b-a),\, \sqrt{(N-2-2b)^2 + 4(N-1)} - (N-2-2b) \right\}.
\] Then the following stability estimates hold, and in each case the given constant is sharp and can be attained:
\begin{itemize}
    \item[(i)] If $1 + b - a > 0$ and $b \leq \frac{N-2}{2}$, then
    $$\delta_2 (u) \geq \frac{C_{PI}(a,b,N)}{2}\inf_{c \in \mathbb{R}, \lambda>0}\displaystyle\int_{\mathbb{R}^N}\dfrac{\left|u-ce^{-\frac{\lambda|x|^{b+1-a}}{(b+1-a)}}\right|^2}{|x|^{1+b+a}}dx.$$
    \item[(ii)] If $1 + b - a < 0$ and $b \geq \frac{N-2}{2}$,
    \[
    \delta_2 (u) \geq \frac{C_{PI}(N-a, N-2-b, N)}{2} \inf_{c \in \mathbb{R}, \lambda>0}\displaystyle\int_{\mathbb{R}^N}\dfrac{\left|u-ce^{\frac{\lambda|x|^{b+1-a}}{(b+1-a)}}\right|^2}{|x|^{1+b+a}}dx.
    \]
    \item[(iii)] If $1 + b - a < 0$ and $b \leq \frac{N-2}{2}$,
    \[
    \delta_2 (u) \geq \frac{C_{PI}(-a+2b+2, b, N)}{2}\inf_{c \in \mathbb{R}, \lambda>0}\displaystyle\int_{\mathbb{R}^N}\dfrac{\left|u-ce^{\frac{\lambda|x|^{b+1-a}}{(b+1-a)}}|x|^{2b+2-N}\right|^2}{|x|^{1+b+a}}dx.
    \]
    \item[(iv)] If $1 + b - a > 0$ and $b \geq \frac{N-2}{2}$,
    \[
    \delta_2 (u) \geq  \frac{ C_{PI}(N+a-2b-2, N-b-2, N)}{2}\inf_{c \in \mathbb{R}, \lambda>0}\displaystyle\int_{\mathbb{R}^N}\dfrac{\left|u-ce^{-\frac{\lambda|x|^{b+1-a}}{(b+1-a)}}|x|^{2b+2-N}\right|^2}{|x|^{1+b+a}}dx.
    \]
 \end{itemize} 
 \end{theorem}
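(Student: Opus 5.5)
We treat case (i) first and then reduce (ii)–(iv) to it by Kelvin-type transforms, following the route of \cite{CFLL24}. The Gaussian Poincaré inequality \eqref{keyPoin} is replaced by a new sharp weighted Poincaré inequality with constant $C_{PI}(a,b,N)$.

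\textbf{Step 1: remainder identity.} Set $\gamma=b+1-a>0$ and, for $\lambda>0$, $w_\lambda(x)=e^{-\lambda|x|^\gamma/\gamma}$. For $u=w_\lambda v$ we expand
\[
\int\frac{|\nabla u|^2}{|x|^{2b}}=\int \frac{w_\lambda^2|\nabla v|^2}{|x|^{2b}}-2\lambda\int \frac{v w_\lambda^2 \,x\cdot\nabla v}{|x|^{a+b+1}}+\lambda^2\int\frac{|u|^2}{|x|^{2a}}
\]
and integrate the middle term by parts, using $\operatorname{div}(x|x|^{-(a+b+1)}w_\lambda^2)$. This gives the exact identity
\[
\int\frac{|\nabla u|^2}{|x|^{2b}}+\lambda^2\int\frac{|u|^2}{|x|^{2a}}-2\lambda\,\tfrac{N-(a+b+1)}{2}\int\frac{|u|^2}{|x|^{a+b+1}}=\int \frac{|\nabla v|^2 w_\lambda^2}{|x|^{2b}}.
\]
Here $\tfrac{N-(a+b+1)}{2}=C(N,a,b)\ge 0$ in $\mathcal A_1$. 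We choose the optimal $\lambda=(\int|\nabla u|^2|x|^{-2b}/\int|u|^2|x|^{-2a})^{1/2}$. Then the left side equals $2\lambda\,\delta_2(u)$, so
\[
\delta_2(u)=\frac1{2\lambda}\int\frac{|\nabla v|^2}{|x|^{2b}}e^{-2\lambda|x|^\gamma/\gamma}\,dx .
\]

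\textbf{Step 2: weighted Poincaré inequality, the main obstacle.} We need
\[
\int\frac{|\nabla v|^2}{|x|^{2b}}e^{-2\lambda|x|^\gamma/\gamma}\ge \lambda\, C_{PI}(a,b,N)\inf_c\int\frac{|v-c|^2}{|x|^{a+b+1}}e^{-2\lambda|x|^\gamma/\gamma}.
\]
Scaling $x\mapsto \lambda^{-1/\gamma}x$ reduces this to $\lambda=1$; note $2b+\gamma-(a+b+1)=0$, which is exactly what makes the scaling work. Multiplying back by $w_\lambda$ then turns the right side into the infimum in (i) (with $\lambda$ and $c$ free), giving (i) with constant $C_{PI}/2$.

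To prove the $\lambda=1$ inequality we decompose $v=\sum_{k}v_k(r)Y_k$ in spherical harmonics, with $\mu_k=k(k+N-2)$. Each mode gives the one-dimensional problem
\[
\int_0^\infty (v_k'^2+\mu_k r^{-2}v_k^2)\,r^{N-1-2b}e^{-2r^\gamma/\gamma}\,dr \ge C\int_0^\infty v_k^2\,r^{N-2-a-b}e^{-2r^\gamma/\gamma}\,dr .
\]
With the change of variable $s=2r^\gamma/\gamma$ this becomes a Laguerre operator with parameter $\alpha=(N-2-2b)/\gamma$.

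For $k=0$ we subtract the constant. The first nonzero eigenvalue, attained by the degree-one Laguerre polynomial, should give $2\gamma=2(1+b-a)$.

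For $k\ge1$ the ground state is $r^{\beta}\times L^{(\alpha')}_0$, where $\beta$ solves $\beta(\beta+N-2-2b)=\mu_k$. Hence $\beta=\tfrac12\bigl(\sqrt{(N-2-2b)^2+4\mu_k}-(N-2-2b)\bigr)$ and the eigenvalue is $2\beta$. This is increasing in $k$, and $k=1$ gives the second entry of $C_{PI}$.

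Verifying completeness of these generalized Laguerre expansions under the singular weight is the technical core. So is checking the admissibility of $v$ near the origin (density in the weighted space), which uses $b\le (N-2)/2$. Sharpness comes from the eigenfunctions: $v=L_1^{(\alpha)}(s)$ or $v=r^{\beta}Y_1$, and equality then holds throughout.

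\textbf{Step 3: cases (ii)–(iv).} For (iii) and (iv) we use the Kelvin-type transform $u(x)=|x|^{2b+2-N}\tilde u(x)$. This preserves $\int|u|^2|x|^{-2a}$ and $\int|u|^2|x|^{-(a+b+1)}$ after relabeling $a\to -a+2b+2$ (resp. the analogous map). The cross term produced in $\int|\nabla u|^2|x|^{-2b}$ integrates to zero. For (ii) we use the inversion $x\mapsto x/|x|^2$ combined with multiplication by $|x|^{2b+2-N}$, which sends $(a,b)\mapsto(N-a,N-2-b)$ and exchanges $\mathcal A_2$ with $\mathcal A_1$. In each case we check that the deficit and the distance functional transform covariantly, with the optimizers mapping to the stated families. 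Then (ii)–(iv) follow from (i) with the indicated $C_{PI}$ arguments, and sharpness is inherited.
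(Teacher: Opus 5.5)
Your proof of the inequalities follows the paper's route. Step 1 is Theorem~\ref{L2CKNI} with $A=|x|^{-2b}$, $\mathbf{X}=-|x|^{b-a}\frac{x}{|x|}$ and the optimal $\lambda$. Step 2 is Theorems~\ref{lemma 0}, \ref{lemma 0.1} and \ref{lemma 1}; for the modes $k\ge1$ the paper uses the substitution $w=s^{\beta}\eta$, which gives an exact remainder, so completeness is needed only for the radial Laguerre expansion. For (ii)--(iv) you transform the deficit and the distance directly, instead of Kelvin-transforming the Poincar\'e inequality and redoing the remainder identity with a new $\mathbf{X}$. That is legitimate and a bit cleaner, but the bookkeeping needs fixing:
\begin{itemize}
\item The inversion $u(x)=\tilde u(x/|x|^2)$ alone gives $(a,b)\mapsto(N-a,N-2-b)$, taking (ii) to (i).
\item The multiplication $u=|x|^{2b+2-N}\tilde u$ gives $(a,b)\mapsto(a+N-2b-2,N-2-b)$, taking (iv) to (i) and (iii) to (ii). Its cross term does not vanish; it cancels the zeroth-order term.
\item Only the composition gives $(a,b)\mapsto(2b+2-a,b)$, which is what (iii) needs.
\item The scaling relation is $2b+2=\gamma+(a+b+1)$, not $2b+\gamma=a+b+1$.
\end{itemize}

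The genuine gap is the sharpness/attainment claim, which fails for $v=L^{(\alpha)}_1(s)$ with $s=2|x|^{\gamma}/\gamma$. For $u=e^{-s/2}(1+\alpha-s)$ the first inequality is an equality. One checks $\int|\nabla u|^2|x|^{-2b}=\int|u|^2|x|^{-2a}$, so your optimal $\lambda$ is $1$ and $\delta_2(u)=\frac{C_1}{2}\int|u|^2|x|^{-(a+b+1)}$. But the last step, replacing $\inf_c$ at $\lambda=1$ by $\inf_{c,\lambda}$, is strict. Since $\partial_\lambda e^{-\lambda s/2}=-\frac{s}{2}e^{-\lambda s/2}$, the function $u\in\mathrm{span}\{e^{-s/2},se^{-s/2}\}$ is the dilation direction of the family $\{ce^{-\lambda s/2}\}$. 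Explicitly, $\int u\,e^{-\lambda s/2}|x|^{-(a+b+1)}dx$ is a nonzero multiple of $\mu^{-\alpha-1}(1-\mu^{-1})$ with $\mu=\frac{1+\lambda}{2}$. It is nonzero for $\lambda\neq1$, so $\inf_{c,\lambda}\int|u-ce^{-\lambda s/2}|^2|x|^{-(a+b+1)}dx<\int|u|^2|x|^{-(a+b+1)}dx$.

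No other choice repairs this when $2(1+b-a)<D_1$. Equality forces $u/w_{\lambda_u}$ to be a Poincar\'e extremal. After scaling to $\lambda_u=1$ this means $v=a_0+a_1L^{(\alpha)}_1(s)$. A direct computation gives $\int|\nabla u|^2|x|^{-2b}-\int|u|^2|x|^{-2a}=\kappa\,a_0a_1$ with $\kappa\neq0$, so $\lambda_u=1$ forces $a_0=0$ (or $a_1=0$, i.e.\ $u$ on the manifold). That is exactly the strict case above. So in that regime $C_{PI}/2$ is not attained, and a compactness argument along the same lines shows it is not even the optimal constant.

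Your argument is correct when $C_{PI}=D_1$. Take $u=e^{-|x|^{\gamma}/\gamma}|x|^{\beta-1}x_1$ with $\beta=D_1/2$. Oddness gives $\int u\,e^{-\lambda|x|^{\gamma}/\gamma}|x|^{-(a+b+1)}dx=0$ for every $\lambda$. The relation $\beta(\beta+N-2-2b)=N-1$ plus one integration by parts gives $\int|\nabla u|^2|x|^{-2b}=\int|u|^2|x|^{-2a}$, so equality holds throughout. The paper's proof gives no separate argument for sharpness of the $\delta_2$ bounds either (it just says ``Similarly''). This part of the statement is therefore justified only when $C_{PI}=D_1$, and then transfers to (ii)--(iv) through your transforms.
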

\begin{remark}
 As stated in Theorem~A, the case $1+b-a=0$ corresponds to the weighted Hardy inequality, and it is well known that the sharp constant cannot be attained. 
  
\end{remark}
To prove Theorem~\ref{staL2CKN}, we adopt the strategy developed in~\cite{CFLL24}. For this purpose, we will first establish sharp weighted $L^2$-Poincar\'e inequalities with Gaussian-type measures, which constitute our second main result. These inequalities form  a much broader generalization of the classical Gaussian Poincar\'e inequalities and are of independent interests.

\begin{theorem}\label{TPoincare}
Let \(N \geq 1\), and \(a,b \in \mathbb{R}\). For \(u \in C_0^\infty(\mathbb{R}^N)\),  the following weighted Poincar\'e inequalities hold, and in each case the given constant is sharp and can be attained:
\begin{itemize}
    \item[(i)] If \(1 + b - a > 0\) and \(b \leq \frac{N-2}{2}\),
    \[
    \int_{\mathbb{R}^N} |\nabla u|^2 \frac{e^{-\frac{2|x|^{1+b-a}}{1+b-a}}}{|x|^{2b}} \, dx
    \geq C_{PI}(a,b,N) \inf_{c \in \mathbb{R}} \int_{\mathbb{R}^N} |u - c|^2 \frac{e^{-\frac{2|x|^{1+b-a}}{1+b-a}}}{|x|^{1+b+a}} \, dx.
    \]
    \item[(ii)] If \(1 + b - a < 0\) and \(b \geq \frac{N-2}{2}\),
    \[
    \int_{\mathbb{R}^N} |\nabla u|^2 \frac{e^{\frac{2|x|^{1+b-a}}{1+b-a}}}{|x|^{2b}} \, dx
    \geq C_{PI}(N-a, N-2-b, N) \inf_{c \in \mathbb{R}} \int_{\mathbb{R}^N} |u - c|^2 \frac{e^{\frac{2|x|^{1+b-a}}{1+b-a}}}{|x|^{1+b+a}} \, dx.
    \]
    \item[(iii)] If \(1 + b - a < 0\) and \(b \leq \frac{N-2}{2}\),
    \[
    \int_{\mathbb{R}^N} |\nabla u|^2 \frac{e^{\frac{2|x|^{1+b-a}}{1+b-a}}}{|x|^{2N - 2b - 4}} \, dx
    \geq C_{PI}(-a+2b+2, b, N) \inf_{c \in \mathbb{R}} \int_{\mathbb{R}^N} |u - c|^2 \frac{e^{\frac{2|x|^{1+b-a}}{1+b-a}}}{|x|^{2N + a - 3b - 3}} \, dx.
    \]
    \item[(iv)] If \(1 + b - a > 0\) and \(b \geq \frac{N-2}{2}\),
    \[
    \int_{\mathbb{R}^N} |\nabla u|^2 \frac{e^{-\frac{2|x|^{1+b-a}}{1+b-a}}}{|x|^{2N - 2b - 4}} \, dx
    \geq C_{PI}(N+a-2b-2, N-b-2, N) \inf_{c \in \mathbb{R}} \int_{\mathbb{R}^N} |u - c|^2 \frac{e^{-\frac{2|x|^{1+b-a}}{1+b-a}}}{|x|^{2N + a - 3b - 3}} \, dx.
    \]
\end{itemize}
Here, $C_{PI}$ is defined as in Theorem~\ref{staL2CKN}.
\end{theorem}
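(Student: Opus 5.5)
The plan is to prove case (i) directly by spectral analysis, and then obtain (ii)–(iv) from (i) through Kelvin-type transforms that move the parameters into the region of (i).

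\textbf{Case (i): reduction.} Set $\gamma=1+b-a>0$ and write $x=r\sigma$. Expand $u(x)=\sum_{k,j} f_{k,j}(r)Y_{k,j}(\sigma)$ in spherical harmonics, where the $Y_{k,j}$ have eigenvalues $k(k+N-2)$. Both sides of the inequality decouple in $k$. For each mode, the left side becomes
\[
\int_0^\infty \Big(|f'|^2+\frac{k(k+N-2)}{r^2}|f|^2\Big) r^{N-1-2b}e^{-2r^\gamma/\gamma}\,dr,
\]
and the right side becomes $\int_0^\infty |f|^2 r^{N-2-a-b}e^{-2r^\gamma/\gamma}\,dr$. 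The constant $c$ only interacts with $k=0$.

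Next change variables to $s=\tfrac{2}{\gamma}r^\gamma$. Since $r^{N-2-a-b}\,dr$ differs from $r^{N-1-2b}\,dr$ by the factor $r^{\gamma-1}$, this choice is exactly what makes the weights compatible. The radial energy becomes a generalized Laguerre form
\[
\int_0^\infty |g'(s)|^2 s^{\alpha+1}e^{-s}\,ds \quad\text{versus}\quad \int_0^\infty |g|^2 s^{\alpha}e^{-s}\,ds,
\]
with $\alpha=\frac{N-2-2b}{\gamma}$ and overall multiplicative factor $2\gamma$. The Laguerre polynomials $L_n^{(\alpha)}$ diagonalize this form with eigenvalues $n$. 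The hypothesis $b\le \frac{N-2}{2}$ gives $\alpha\ge 0$, which ensures integrability and density of polynomials.

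\textbf{Case (i): the two modes.} For $k=0$, after removing the constant the first eigenvalue is $n=1$. This gives the constant $2\gamma=2(1+b-a)$, attained by $u=|x|^{\gamma}$, i.e.\ $L_1^{(\alpha)}$.

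For $k\ge1$ the extra term is $k(k+N-2)r^{-2}$. Here I substitute $f=r^{\beta}h$ with $\beta$ chosen to solve the indicial equation
\[
\beta(\beta+N-2-2b)=k(k+N-2).
\]
After this, the problem reduces again to a Laguerre operator in $h$, with lowest eigenvalue attained by $h=$ const. Computing, the Rayleigh quotient for $f=r^\beta$ equals $2\beta$ times the weight-matching factor. So the $k$-th mode constant is
\[
2\beta_k=\sqrt{(N-2-2b)^2+4k(k+N-2)}-(N-2-2b),
\]
which is increasing in $k$. Its minimum over $k\ge1$ is the second entry of $C_{PI}$, attained by $u=|x|^{\beta_1}Y_1=r^{\beta_1-1}x_i$.

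Taking the minimum of the $k=0$ and $k\ge1$ constants yields $C_{PI}(a,b,N)$ together with sharpness. The main obstacle is justifying the ground state $h\equiv$ const for $k\ge1$ rigorously. I would handle it with a ground-state substitution and integration by parts: write $f=r^{\beta}h$, expand $|f'|^2$, and integrate the cross term by parts. The result is an identity
\[
\text{LHS}-2\beta_k\,\text{RHS}=\int |h'|^2(\cdots)\ge0,
\]
in which the boundary terms vanish at $0$ because $\beta_k>0$ and $u\in C_0^\infty$.

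\textbf{Cases (ii)–(iv).} Apply the Kelvin-type transforms $u(x)=v(x/|x|^2)$ and $u(x)=|x|^{2b+2-N}v(x)$, or compositions of them. Under $x\mapsto x/|x|^2$ the weight exponents transform so that $(a,b)\mapsto(N-a,N-2-b)$ and $\gamma\mapsto-\gamma$. Multiplying by $|x|^{2b+2-N}$ converts the weight $|x|^{-2b}$ into $|x|^{-(2N-2b-4)}$, up to a cross term that integrates out by parts. Checking that each transformed parameter pair lies in region (i), and that constants are preserved by the map $v\mapsto u$, gives the stated constants $C_{PI}(N-a,N-2-b,N)$, $C_{PI}(-a+2b+2,b,N)$ and $C_{PI}(N+a-2b-2,N-b-2,N)$. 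The extremals transfer with the maps.
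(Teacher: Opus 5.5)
Your proof of (i) is the paper's argument. Your substitution $s=\frac{2}{\gamma}r^{\gamma}$ is the composite of the paper's $\frac{s^2}{2}=\frac{2r^{\gamma}}{\gamma}$ and $t=\frac{s^2}{2}$, so it gives the same Laguerre parameter $\frac{N-2-2b}{\gamma}$ and the same constant $2(1+b-a)$ for $k=0$. Your ground-state substitution $f=r^{\beta}h$ with $\beta(\beta+N-2-2b)=k(k+N-2)$ is Theorem~\ref{lemma 0.1} written in the $r$ variable. Your Kelvin step for (ii) is Theorem~\ref{lemma 2}.

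One small correction in (i): the boundary term at $r=0$ is $\beta_k r^{N-2-2b}f_k(r)^2e^{-2r^{\gamma}/\gamma}$. It vanishes because $f_k(r)=O(r^k)$ with $k\ge 1$ and $N-2-2b\ge 0$, not because $\beta_k>0$.

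\textbf{The gap: the power multiplication in (iii)--(iv).} The multiplication $u=|x|^{\mu}v$ with $\mu=2b+2-N$ fails. Take (iv), with $\phi=-\frac{2|x|^{\gamma}}{\gamma}$, and integrate the cross term by parts. This cancels the Hardy term $\mu^2\int|x|^{2\mu-2-2b}v^2e^{\phi}$, but it also differentiates $e^{\phi}$, and the result is exactly
\[
\int_{\mathbb{R}^N}|\nabla u|^2\frac{e^{\phi}}{|x|^{2b}}\,dx=\int_{\mathbb{R}^N}|\nabla v|^2\frac{e^{\phi}}{|x|^{2N-2b-4}}\,dx+2(2b+2-N)\int_{\mathbb{R}^N}|v|^2\frac{e^{\phi}}{|x|^{2N+a-3b-3}}\,dx .
\]
So the cross term does not integrate out unless $b=\frac{N-2}{2}$. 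There are two further problems:
\begin{itemize}
\item Constants are not preserved: $v=c$ corresponds to $u=c|x|^{\mu}$, so the $\inf_c$ is not transported.
\item The map does not land in region (i). In (iv) the $u$-side problem has (i)-type weights at $(a,b)$ with $b\ge\frac{N-2}{2}$, which is outside region (i).
\end{itemize}
This power factor is the right tool for the CKN deficits in Section~3, where there is no $\inf_c$, but not for these Poincar\'e inequalities.

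\textbf{The fix.} No function transform beyond a single Kelvin inversion is needed. Set $(m,n)=(N+a-2b-2,\,N-b-2)$. Then
\[
2n=2N-2b-4,\qquad 1+n+m=2N+a-3b-3,\qquad 1+n-m=1+b-a,
\]
and $n\le\frac{N-2}{2}\iff b\ge\frac{N-2}{2}$. Hence:
\begin{itemize}
\item (iv) at $(a,b)$ is verbatim (i) at $(m,n)$, giving the constant $C_{PI}(N+a-2b-2,N-b-2,N)$.
\item (iii) at $(a,b)$ is verbatim (ii) at $(m,n)$, which is the Kelvin image of (i) at $(2b+2-a,\,b)$. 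This gives the constant $C_{PI}(-a+2b+2,b,N)$.
\end{itemize}
This pure relabeling is exactly how the paper obtains both constants.
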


\begin{remark}
In Section~2, we completely characterize the explicit optimizers and then establish sharp stability estimates for the inequalities in Theorem~\ref{staL2CKN} and Theorem~\ref{TPoincare}.
\end{remark}

We note that in the special case $a = -1$ and $b = 0$, our weighted Poincaré inequality reduces to  
\begin{equation}\label{classicalPoin1}
\int_{\mathbb{R}^{N}} |\nabla u|^{2} e^{-|x|^{2}}\,dx
\geq 2 \inf_{c \in \mathbb{R}} \int_{\mathbb{R}^{N}} |u - c|^{2} e^{-|x|^{2}}\,dx,
\end{equation}
which is equivalent to the following classical Poincaré inequality with respect to the Gaussian measure:
\begin{equation}\label{classicalPoin}
\int_{\mathbb{R}^{N}} |\nabla u|^{2} e^{-\frac{|x|^{2}}{2}}\,dx
\geq \inf_{c \in \mathbb{R}} \int_{\mathbb{R}^{N}} |u - c|^{2} e^{-\frac{|x|^{2}}{2}}\,dx.
\end{equation}

It is well-known that $1$ is the optimal constant in the Gaussian Poincaré inequality \eqref{classicalPoin} (and therefore $2$ is sharp in \eqref{classicalPoin1}), which perfectly aligns with our general result. It is also worth noting that this inequality \eqref{classicalPoin} has been established through various classical methods, including the spectral analysis of the Ornstein-Uhlenbeck operator via Hermite polynomial decomposition, probabilistic techniques involving concentration of measure, semigroup interpolation methods, and by linearizing the logarithmic Sobolev inequality; see, for example, \cite{BGL14, GRO75, GRO75b}.

However, since our Poincaré inequality involves singular weights of the form $|x|^\alpha$, these traditional methods no longer apply directly. The singular nature of the weights breaks many of the symmetries and functional frameworks that classical proofs rely on. Consequently, substantial new techniques are required.

In this paper, to prove Theorem \ref{TPoincare}, we need to establish the following Poincaré inequalities stated in Theorem \ref{Tkey} and Theorem \ref{Tkey1} below.
These one-dimensional Gaussian type weighted Poincar\'e inequalities, which play a crucial role in our proof of Theorem \ref{TPoincare},  have not appeared in the literature.   The proofs of these inequalities involve innovative  ideas of using  the generalized Laguerre polynomial expansion combining with classical tools such as  spherical harmonics decomposition, a Kelvin-type transform, and a judicious change of variables. These techniques allow us to carefully analyze the weighted structure and establish sharp inequalities in this more complex setting. More precisely, this approach enables us to determine the sharp constants and optimizers, and then derive the associated stability estimates.

\begin{theorem}\label{Tkey}
    Let \(\alpha > 0\). For any function \(u \in W^{1,2}((0, \infty), e^{-\frac{s^2}{2}} s^{\alpha - 1} ds)\), the following sharp inequality holds:
    \[
    \int_0^\infty |u'(s)|^2 e^{-\frac{s^2}{2}} s^{\alpha - 1} \, ds 
    \geq 2 \inf_{c \in \mathbb{R}} \int_0^\infty |u(s) - c|^2 e^{-\frac{s^2}{2}} s^{\alpha - 1} \, ds.
    \]
    The constant \(2\) is optimal and attained precisely by functions of the form
    \[
    u(s) = a_0 + a_1 s^2,
    \]
    where \(a_0, a_1 \in \mathbb{R}\).
\end{theorem}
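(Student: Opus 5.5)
The plan is to change variables so the weight becomes a Laguerre weight, then expand in generalized Laguerre polynomials. Set $t = s^2/2$, so that $s = \sqrt{2t}$ and $ds = (2t)^{-1/2}\,dt$. Write $u(s) = v(t)$, so that $u'(s) = v'(t)\,s = v'(t)\sqrt{2t}$. Then the weight transforms as
\[
e^{-s^2/2}s^{\alpha-1}\,ds = 2^{\alpha/2-1} e^{-t} t^{\alpha/2-1}\,dt .
\]
With $\beta := \alpha/2 - 1 > -1$, the inequality becomes
\[
\int_0^\infty 2t\,|v'(t)|^2 e^{-t}t^{\beta}\,dt \;\ge\; 2\inf_c \int_0^\infty |v-c|^2 e^{-t}t^{\beta}\,dt,
\]
that is,
\[
\int_0^\infty t|v'|^2 e^{-t}t^\beta\,dt \;\ge\; \inf_c\int_0^\infty |v-c|^2e^{-t}t^\beta\,dt .
\]
This is the Poincaré inequality for the Laguerre operator $\mathcal{L}v = -t v'' - (\beta+1-t)v'$. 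This operator is symmetric on $L^2(e^{-t}t^\beta dt)$ with Dirichlet form $\int t|v'|^2 e^{-t}t^\beta\,dt$, and its eigenfunctions are the generalized Laguerre polynomials $L_n^{(\beta)}$ with eigenvalues $n$.

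The key steps are as follows.

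\textbf{Step 1 (completeness).} Since $\beta>-1$, the polynomials $\{L_n^{(\beta)}\}$ form a complete orthogonal basis of $L^2((0,\infty),e^{-t}t^\beta dt)$; this is a standard fact. Expand
\[
v = \sum_{n\ge 0} c_n L_n^{(\beta)} .
\]

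\textbf{Step 2 (orthogonality of derivatives).} Use $\frac{d}{dt}L_n^{(\beta)} = -L_{n-1}^{(\beta+1)}$ together with the orthogonality of $\{L_m^{(\beta+1)}\}$ in $L^2(e^{-t}t^{\beta+1}dt)$. This gives
\[
\int t|v'|^2 e^{-t}t^\beta\,dt = \sum_{n\ge1} c_n^2\,\|L_{n-1}^{(\beta+1)}\|^2_{\beta+1}.
\]
Using the norms $\|L_n^{(\beta)}\|^2 = \Gamma(n+\beta+1)/n!$, we get
\[
\|L_{n-1}^{(\beta+1)}\|^2_{\beta+1} = \frac{\Gamma(n+\beta+1)}{(n-1)!} = n\,\|L_n^{(\beta)}\|_\beta^2 .
\]

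\textbf{Step 3 (conclusion).} The infimum over $c$ is attained at $c = c_0$, since $L_0 = 1$. Hence the right-hand side equals $\sum_{n\ge1}c_n^2\|L_n^{(\beta)}\|^2$, while the left-hand side equals $\sum_{n\ge 1} n\,c_n^2\|L_n^{(\beta)}\|^2$. The inequality follows, with equality if and only if $c_n = 0$ for all $n\ge2$. In that case $v$ is affine in $t$, i.e.\ $u = a_0 + a_1 s^2$. Testing $v = L_1^{(\beta)}$ shows the constant is attained, so it is optimal.

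\textbf{Main obstacle: justification.} The main work is the functional-analytic justification for a general $u$ in the weighted Sobolev space $W^{1,2}(e^{-s^2/2}s^{\alpha-1}ds)$, rather than for polynomials. Two points need care. First, we must know that the derivative of the $L^2$-expansion is the expansion of the derivative, so that the Parseval identity in Step 2 holds for the Dirichlet form. Second, we must exclude boundary contributions at $t=0$ when $\beta\le 0$, which is where the weight is singular.

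For this I would proceed by approximation. Show that polynomials, or $C_c^\infty([0,\infty))$ functions, are dense in the form domain with respect to the norm
\[
\Big(\int (t|v'|^2+|v|^2)e^{-t}t^\beta\,dt\Big)^{1/2}.
\]
This uses truncation at infinity and the fact that the weight $t^{\beta+1}$ in the Dirichlet form is integrable near $0$. Alternatively, compute the coefficients of $w = \sqrt{t}\,v'$ in the basis $\{L_{n-1}^{(\beta+1)}\}$ directly by integrating by parts against $L_{n-1}^{(\beta+1)}$ on $[\varepsilon,R]$. The boundary terms vanish in the limit because the weights $t^{\beta+1}e^{-t}$ kill the contributions at both ends along suitable sequences $\varepsilon\to0$ and $R\to\infty$. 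This yields Bessel's inequality, which is all that is needed for the lower bound.
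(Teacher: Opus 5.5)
Your proposal is correct and is essentially the paper's proof: the substitution $t=s^2/2$, the expansion in generalized Laguerre polynomials $L_k^{(\beta)}$ with $\beta=\alpha/2-1$, the identity $\frac{d}{dt}L_k^{(\beta)}=-L_{k-1}^{(\beta+1)}$ together with $\|L_k^{(\beta)}\|^2=\Gamma(k+\beta+1)/k!$, and the comparison of $\sum_{k\ge1}k\,a_k^2\|L_k^{(\beta)}\|^2$ with $\sum_{k\ge1}a_k^2\|L_k^{(\beta)}\|^2$. Your justification is more careful than the paper's: you compute the coefficients of $v'$ by integrating by parts on $[\varepsilon,R]$, which gives Bessel's inequality and already suffices for both the bound and the equality case, whereas the paper just reduces to $C_0^\infty(0,\infty)$ ``by density'' and differentiates the series termwise; for $0<\alpha<2$ that reduction is only valid after subtracting the boundary value $u(0^+)$, which is harmless since both sides are invariant under adding constants.
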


\begin{theorem}\label{Tkey1}
    Let $A \geq 2$ and $C > 0$. Then
\begin{align*}
    &\int_0^\infty \left[ (u'(s))^2 + C \left( \frac{u(s)}{s} \right)^2 \right] e^{-s^2/2} s^{A-1} ds \nonumber\\
    &\geq \frac{ - (A - 2) + \sqrt{(A - 2)^2 + 4 C} }{2} \int_0^\infty u^2(s) e^{-s^2/2} s^{A-1} ds.
\end{align*}  The equality holds if and only if $u(s)=c s^{\frac{-(A-2)+\sqrt{(A-2)^2+4C}}{2}}$. 
\end{theorem}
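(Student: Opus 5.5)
We prove Theorem~\ref{Tkey1} with a ground-state substitution. This identifies both the sharp constant and the equality case directly. Set $\gamma := \frac{-(A-2)+\sqrt{(A-2)^2+4C}}{2}$. Then $\gamma>0$ (since $C>0$) and $\gamma$ is the positive root of
\[
\gamma^2+(A-2)\gamma - C = 0 .
\]
The candidate optimizer is $\phi(s)=s^\gamma$. Write $d\mu = e^{-s^2/2}s^{A-1}\,ds$ and consider the weighted operator
\[
Lu = -\frac{1}{s^{A-1}e^{-s^2/2}}\bigl(s^{A-1}e^{-s^2/2}u'\bigr)' + \frac{C}{s^2}u .
\]
A direct computation gives
\[
L\phi = \Bigl(-\gamma(\gamma-1) - (A-1)\gamma + C\Bigr)s^{\gamma-2} + \gamma s^{\gamma} = \gamma\,\phi ,
\]
because the coefficient of $s^{\gamma-2}$ is $-(\gamma^2+(A-2)\gamma - C)=0$. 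So $\phi$ is a positive eigenfunction with eigenvalue $\gamma$, which is exactly the claimed constant.

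Next, for $u$ in the natural class (say $u\in C_0^\infty(0,\infty)$ first, then by density), write $u=\phi v$. Expand $(u')^2 = \phi^2 (v')^2 + 2\phi\phi' v v' + (\phi')^2 v^2$ and integrate the middle term by parts against $e^{-s^2/2}s^{A-1}$. This yields the Picone/ground-state identity
\[
\int_0^\infty \Bigl[(u')^2 + C\frac{u^2}{s^2}\Bigr]d\mu - \gamma\int_0^\infty u^2\,d\mu = \int_0^\infty \phi^2 (v')^2\,d\mu \;\ge\; 0 .
\]
Equivalently, the identity comes from $\int (u' - \frac{\phi'}{\phi}u)^2 d\mu$ together with $\int 2\frac{\phi'}{\phi}uu'\,d\mu = -\int u^2 \frac{(s^{A-1}e^{-s^2/2}\phi'/\phi)'}{s^{A-1}e^{-s^2/2}}ds$. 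Here
\[
\frac{\phi'}{\phi} = \frac{\gamma}{s}, \qquad \bigl(\gamma s^{A-2}e^{-s^2/2}\bigr)' = \gamma\bigl((A-2)s^{A-3} - s^{A-1}\bigr)e^{-s^2/2}.
\]
Collecting terms gives exactly $\gamma^2+(A-2)\gamma=C$ in front of $u^2/s^2$, and $\gamma$ in front of $u^2$. This is a clean computation that can be checked line by line.

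The main obstacle is justifying the integration by parts, i.e.\ that the boundary terms $\gamma s^{A-2}e^{-s^2/2}u^2$ vanish at $0$ and $\infty$, for a general $u$ with finite left-hand side rather than a compactly supported one. At infinity the Gaussian factor handles this for functions in the weighted space; I would argue by density of $C_0^\infty(0,\infty)$ in the energy space. At $0$, the condition $A\ge2$ makes $s^{A-2}$ bounded. Finiteness of $\int u^2 s^{A-3}$ (implied by the $C>0$ term) forces $\liminf_{s\to0} s^{A-2}u^2 = 0$, so the boundary term vanishes along a sequence. I would truncate on $[\varepsilon_k,R_k]$ with such sequences and pass to the limit.

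For the equality case: equality forces $\int \phi^2 (v')^2 d\mu=0$, so $v$ is constant and $u=c\,s^\gamma$. Conversely, $s^\gamma$ lies in the space, since $\gamma>0$ and $A\ge2$ make $\int s^{2\gamma+A-3}e^{-s^2/2}ds$ finite, and equality holds by the identity. This also shows the constant is attained and therefore optimal.
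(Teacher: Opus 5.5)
Your proposal is correct and is essentially the paper's proof: the paper also writes $w(s)=s^{\alpha}\eta(s)$, expands $(w')^2$, integrates the cross term $\alpha\int_0^\infty(\eta^2)'s^{A+2\alpha-2}e^{-s^2/2}\,ds$ by parts, and takes $\alpha$ to be the positive root of $\alpha^2+(A-2)\alpha-C=0$, leaving the remainder $\int_0^\infty(\eta')^2s^{A+2\alpha-1}e^{-s^2/2}\,ds$, which vanishes exactly when $\eta$ is constant. Your treatment of the boundary terms (the term $s^{A-2}e^{-s^2/2}u^2$ vanishing along a sequence $\varepsilon_k\to0$, and the favorable sign of the term at $R$) is a justification the paper omits; one small slip is that the right-hand side of your cross-term identity should be integrated against $d\mu$ rather than $ds$ (or, equivalently, you should drop the division by $s^{A-1}e^{-s^2/2}$).
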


A direct consequence of Theorem \ref{Tkey} (and Theorem \ref{lemma 0}) is the following sharp Gaussian Poincaré inequality for radial functions, which is also of independent interest:

\begin{theorem}\label{TGPoinRadial}
    Let \(N \geq 1\). For any radial function \(u \in W^{1,2}(\mathbb{R}^N, e^{-\frac{|x|^2}{2}} dx)\), the inequality
    \[
    \int_{\mathbb{R}^N} |\nabla u|^2 e^{-\frac{|x|^2}{2}} \, dx 
    \geq 2 \inf_{c \in \mathbb{R}} \int_{\mathbb{R}^N} |u(x) - c|^2 e^{-\frac{|x|^2}{2}} \, dx
    \]
    holds with the sharp constant \(2\). This constant is attained exactly by functions of the form
    \[
    u(x) = a_0 + a_1 |x|^2,
    \]
    with \(a_0, a_1 \in \mathbb{R}\). Moreover, the following stability result holds:
    \begin{align*}
        &\int_{\mathbb{R}^N} |\nabla u|^2 e^{-\frac{|x|^2}{2}} \, dx 
    - 2 \inf_{c \in \mathbb{R}} \int_{\mathbb{R}^N} |u(x) - c|^2 e^{-\frac{|x|^2}{2}} \, dx \\
    &\geq \frac{1}{2} \inf_{d \in \mathbb{R}}\int_{\mathbb{R}^N} |\nabla \left(u-d|x|^2 \right)|^2 e^{-\frac{|x|^2}{2}} \, dx. 
    \end{align*}
\end{theorem}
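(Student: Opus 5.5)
For a radial function $u(x)=v(|x|)$ we have $|\nabla u|=|v'(r)|$ and $dx=|\mathbb{S}^{N-1}|\,r^{N-1}dr$. Both sides of the inequality therefore become one-dimensional integrals against the measure $d\mu=e^{-r^2/2}r^{N-1}dr$ on $(0,\infty)$. This is exactly the setting of Theorem~\ref{Tkey} with $\alpha=N>0$. The inequality with constant $2$, its sharpness, and the description of the optimizers $a_0+a_1|x|^2$ then follow at once from Theorem~\ref{Tkey}. The common factor $|\mathbb{S}^{N-1}|$ cancels, and radial $W^{1,2}$ functions correspond to $v\in W^{1,2}((0,\infty),d\mu)$ after a routine density argument.

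For the stability estimate I would redo the spectral picture behind Theorem~\ref{Tkey}. With $t=s^2/2$, the weighted space $L^2(d\mu)$ becomes $L^2(t^{N/2-1}e^{-t}dt)$ up to constants. The operator $L v=-v''-\big(\tfrac{N-1}{s}-s\big)v'$ satisfies
\[
\int |v'|^2\,d\mu=\int v\,Lv\,d\mu,
\]
and it acts on polynomials in $t$ so that the generalized Laguerre polynomials $L_k^{(N/2-1)}(t)$ are eigenfunctions with eigenvalues $2k$. They form an orthogonal basis of $L^2(d\mu)$; this is what I take to underlie Theorem~\ref{Tkey}. Expand
\[
v=\sum_{k\ge0} c_k\,\phi_k,\qquad \phi_k=L_k^{(N/2-1)}(s^2/2),
\]
normalized in $L^2(d\mu)$. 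Then $\int|v'|^2d\mu=\sum_k 2k\,c_k^2$, and $\inf_c\int|v-c|^2d\mu=\sum_{k\ge1}c_k^2$. The deficit is therefore $D=\sum_{k\ge2}(2k-2)c_k^2$.

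The optimizers $a_0+a_1s^2$ are exactly $\mathrm{span}\{\phi_0,\phi_1\}$. Since $\nabla$ kills constants, $\inf_d\int|\nabla(u-d|x|^2)|^2d\mu$ equals the squared gradient-distance from $v$ to $\mathrm{span}\{\phi_0,\phi_1\}$, with $\phi_1$ an affine function of $s^2$. The gradient seminorm is the quadratic form of $L$, under which the $\phi_k$ are orthogonal. Hence the infimum is attained by removing the $\phi_1$ component, and its value is $\sum_{k\ge2}2k\,c_k^2$. The claimed estimate then reduces to the termwise inequality $2k-2\ge \tfrac12\cdot 2k$, i.e.\ $k\ge2$, which holds; equality occurs at $k=2$, so the constant $\tfrac12$ is in fact sharp.

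The main obstacle is justifying the spectral decomposition: completeness of the Laguerre system in $L^2(t^{\beta}e^{-t}dt)$ with $\beta=N/2-1>-1$, and the identity $\int|v'|^2d\mu=\sum_k 2k\,c_k^2$ for every $v$ in the weighted Sobolev space, including the boundary terms at $s=0$ and $s=\infty$. I would first prove everything for polynomials in $s^2$, together with the orthogonality of the derivatives,
\[
\int \phi_j'\phi_k'\,d\mu=2k\,\delta_{jk},
\]
obtained by integration by parts using that $\phi_k$ is an eigenfunction. I would then pass to general radial $W^{1,2}$ functions by density; this density is presumably already part of the proof of Theorem~\ref{Tkey} and can be cited from there.
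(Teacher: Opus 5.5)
Your proposal is correct and follows the paper's route. The paper obtains this theorem by the same radial reduction to the measure $e^{-s^2/2}s^{N-1}\,ds$, followed by Theorem~\ref{lemma 0} with $\alpha=N$. That theorem already contains the stability half, and its proof is your Laguerre expansion in $t=s^2/2$ with the termwise comparison $k-1\ge k/2$ for $k\ge2$, so you could simply cite it instead of redoing the spectral computation.

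The differences are minor. You get orthogonality of the derivatives from the eigenvalue equation, where the paper uses $\partial_t L_k^{(\beta)}=-L_{k-1}^{(\beta+1)}$. You also compute the infimum over $d$ exactly, which shows the constant $\tfrac12$ is sharp; the paper only plugs in the particular choice $d=-a_1$.
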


Our result shows that restricting to radial functions improves the sharp constant in the classical Gaussian Poincaré inequality, increasing it from $1$ to $2$. Moreover, we establish a gradient stability estimate for this sharp inequality.

Once the sharp weighted \(L^2\)-Poincar\'e inequalities with Gaussian-type measures are established (cf. Theorem \ref{TPoincare}), we will use them to prove Theorem \ref{staL2CKN} and derive stability estimates for the \(L^2\)-CKN inequality with exact optimal constants, covering the full range of parameters \((a,b)\). 

Next, inspired by the aforementioned stability results for various functional and geometric inequalities, we also investigate the stability of the weighted Poincar\'e inequalities (Theorem~\ref{TPoincare}) in Section~2. These results yield sharp improved $L^2$-stability estimates for the $L^2$-CKN inequality, namely the stability version of the sharp inequalities in Theorem~\ref{staL2CKN}. Our approach refines Theorems~\ref{Tkey} and~\ref{Tkey1}, combining them with spherical harmonics decomposition to derive $L^2$-stability for the weighted Gaussian Poincar\'e inequalities, and establishes a general principle equating $L^2$-stability with gradient stability for such inequalities---which holds intrinsic value.

As a special case of our main results, when $a=b=0$, we obtain the following sharp stability result for the Hydrogen Uncertainty Principle, which appears to be still absent in the literature.

\begin{corollary}[The stability of the Hydrogen Uncertainty Principle] Let \(N \geq 2\). Then the following inequality holds:
    \begin{align*}
       & \left( \int_{\mathbb{R}^N} |\nabla u|^2 \, dx \right)^{\frac{1}{2}} \left( \int_{\mathbb{R}^N} |u|^2 \, dx \right)^{\frac{1}{2}} 
       - \frac{N-1}{2} \int_{\mathbb{R}^N} \frac{|u|^2}{|x|} \, dx \\
       & \quad \geq \inf_{c \in \mathbb{R}, \lambda > 0} \int_{\mathbb{R}^N} \frac{\left| u - c e^{-\lambda |x|} \right|^2}{|x|} \, dx. 
    \end{align*}
    The constant $1$  on the right hand side is sharp and can be attained. Moreover, the following stability of the stability inequality holds:
    \begin{align*}
       & \left( \int_{\mathbb{R}^N} |\nabla u|^2 \, dx \right)^{\frac{1}{2}} \left( \int_{\mathbb{R}^N} |u|^2 \, dx \right)^{\frac{1}{2}} 
       - \frac{N-1}{2} \int_{\mathbb{R}^N} \frac{|u|^2}{|x|} \, dx \\
       &- \inf_{c \in \mathbb{R}, \lambda > 0} \int_{\mathbb{R}^N} \frac{\left| u - c e^{-\lambda |x|} \right|^2}{|x|} \, dx \\
       &\geq \inf_{c,d \in \mathbb{R}, \mathbf{a} \in \mathbb{R}^N, \lambda > 0} \int_{\mathbb{R}^N} \frac{\left| u - (c+d|x|+\mathbf{a}\cdot x) e^{-\lambda |x|} \right|^2}{|x|} \, dx.
    \end{align*}
\end{corollary}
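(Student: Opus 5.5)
We obtain the corollary as the special case $a=b=0$ of Theorem~\ref{staL2CKN}(i), and then refine its proof by one more level of the spectral decomposition. For $N\ge 2$ we have $1+b-a=1>0$ and $b=0\le \frac{N-2}{2}$, so case (i) applies. By Theorem~A, $C(N,0,0)=\frac{N-1}{2}$. Moreover
\[
C_{PI}(0,0,N)=\min\{2,\sqrt{(N-2)^2+4(N-1)}-(N-2)\}=\min\{2,\,N-(N-2)\}=2,
\]
so the constant in Theorem~\ref{staL2CKN}(i) is $C_{PI}/2=1$. Since $b+1-a=1$, the optimizers are $ce^{-\lambda|x|}$. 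This gives the first inequality, and its sharpness and attainability are inherited from Theorem~\ref{staL2CKN}.

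For the second inequality I would redo the argument of \cite{CFLL24} explicitly. Expanding the square gives, for every $\lambda>0$,
\[
\int|\nabla u+\lambda \tfrac{x}{|x|}u|^2=\int|\nabla u|^2+\lambda^2\int u^2-\lambda(N-1)\int \frac{u^2}{|x|}.
\]
Choosing $\lambda=\|\nabla u\|_2/\|u\|_2$ turns this into
\[
2\lambda\,\delta_2(u)=\int|\nabla u+\lambda\tfrac{x}{|x|}u|^2 .
\]
Writing $u=ve^{-\lambda|x|}$, the right side becomes $\int|\nabla v|^2e^{-2\lambda|x|}\,dx$. After the scaling $x\mapsto x/\lambda$ we need a sharp spectral description of the quadratic form $\int|\nabla v|^2e^{-2|x|}$ relative to $\int v^2e^{-2|x|}|x|^{-1}$. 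I would decompose $v=\sum_{k}v_k(r)Y_k$ in spherical harmonics and substitute $r=s^2/4$. Then $e^{-2r}=e^{-s^2/2}$, $|x|^{-1}dr\propto s^{-1}ds$ and $v_r=\frac{2}{s}v_s$, so each mode becomes exactly the one-dimensional form of Theorem~\ref{Tkey} (for $k=0$) or of Theorem~\ref{Tkey1} (with $C\propto k(k+N-2)$, for $k\ge1$).

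The generalized Laguerre expansion behind Theorem~\ref{Tkey} gives radial eigenvalues $2n$ with polynomial eigenfunctions in $s^2\propto r$, so $\mu=0,2,4,\dots$, with eigenfunctions $1$, $r$, $\dots$. The ground state of mode $k$ is $s^{2k}\propto r^k$ with eigenvalue $2k$: one checks that the exponent $\frac{-(A-2)+\sqrt{(A-2)^2+4C}}{2}$ from Theorem~\ref{Tkey1} equals $2k$ with the correct $A$ and $C$. Higher eigenvalues in each mode go up in steps of $2$, again through the Laguerre expansion applied to $s^{-2k}u$. Consequently the eigenspace for eigenvalue $0$ is spanned by the constants, the eigenspace for eigenvalue $2$ is spanned by $r$ and the functions $x_i=rY_1$, and every other eigenvalue is at least $4$.

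Let $c_0$, $d$ and $\mathbf a$ be the projections of $v$ onto these eigenspaces. Then
\[
\int|\nabla v|^2w-2\int |v-c_0|^2\frac{w}{|x|}\ \ge\ (4-2)\int|v-c_0-d|x|-\mathbf a\cdot x|^2\frac{w}{|x|}.
\]
Undoing the scaling and dividing by $2\lambda$, the first term on the left becomes $\delta_2(u)$. The second becomes $\int|u-c_0e^{-\lambda|x|}|^2/|x|$, which is at least the infimum over $c$ and $\lambda$ that is subtracted in the corollary. The right side becomes $\int|u-(c+d|x|+\mathbf a\cdot x)e^{-\lambda|x|}|^2/|x|$ with constant $2\lambda\cdot\lambda^{-1}\cdot\frac{1}{2\lambda}\cdot\lambda=1$. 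Taking the infimum gives the claim.

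I expect the main obstacle to be the bookkeeping in the change of variables $r=s^2/4$. One has to confirm that the $k$-th spherical mode really reduces to Theorem~\ref{Tkey1} with ground exponent exactly $2k$, and that the gap above the first excited level is exactly $2$ in every mode. This needs the full Laguerre spectrum of each one-dimensional operator, not just its bottom. I would first verify directly that $r^k$, and $r^k$ times degree-one Laguerre polynomials in $4r$, are eigenfunctions of $-\operatorname{div}(e^{-2|x|}\nabla\,\cdot)$ relative to the weight $e^{-2|x|}|x|^{-1}$. Completeness of these eigenfunctions in each mode then follows from the completeness of Laguerre polynomials.
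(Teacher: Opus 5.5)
Your proposal is correct and follows the paper's approach. The paper reads the first inequality off Theorem~\ref{staL2CKN}(i) with $a=b=0$, and the second off the $C_1=D_1$ case of its Section~3 theorem, where $C_1=D_1=2$, $D_2=4$ and hence $\min\{D_1,D_2-D_1\}/2=1$; your spectral computation (eigenvalues $2(n+k)$ in the $k$-th spherical mode, with gap $2$ above the level spanned by $1$, $|x|$, $x_i$) reproduces exactly this. One cosmetic slip: the radial eigenfunction at level $2$ is $N-1-2|x|$, not $|x|$, which is harmless because constants are removed as well.
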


In the same spirit, we will next consider the \(L^p\)-CKN inequality. In \cite{DFLL23}, the authors proved the following result: 

\begin{theorema}\label{TC}
Let \(p > 1\), and \(a,b \in \mathbb{R}\). There exists a constant \(C(p,N,a,b) > 0\) such that for all \(u \in C_0^\infty(\mathbb{R}^N \setminus \{0\})\), the inequality
\begin{equation}\label{LpCKN}
\left( \int_{\mathbb{R}^N} \frac{|\nabla u|^p}{|x|^{p b}} \, dx \right)^{\frac{1}{p}}
\left( \int_{\mathbb{R}^N} \frac{|u|^p}{|x|^{p a}} \, dx \right)^{\frac{p-1}{p}}
\geq C(p,N,a,b) \int_{\mathbb{R}^N} \frac{|u|^p}{|x|^{(p-1)a + b + 1}} \, dx
\end{equation}
holds. Moreover, when \(b + 1 - a > 0\) and \(b \leq \frac{N-p}{p}\), the optimal constant is
\[
C(p,N,a,b) = \frac{N - 1 - (p - 1)a - b}{p},
\]
and 
is attained uniquely by functions of the form
\[
u(x) = D \exp\left( \frac{t |x|^{b + 1 - a}}{b + 1 - a} \right), \quad t < 0.
\]
When \(b + 1 - a < 0\) and \(b \geq \frac{N-p}{p}\), the sharp constant is
\[
C(p,N,a,b) = \frac{1 + (p - 1)a + b - N}{p},
\]
and is 
attained uniquely by functions of the form
\[
u(x) = D \exp\left( \frac{t |x|^{b + 1 - a}}{b + 1 - a} \right), \quad t > 0.
\]
\end{theorema}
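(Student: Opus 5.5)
Theorem~C is the $L^p$-CKN inequality \eqref{LpCKN} with its sharp constant and optimizers. I would prove it by the ``expanding the $p$-th power'' argument, which reduces everything to a divergence identity and Hölder's inequality.

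\textbf{Step 1 (integration by parts).} Work in the regime $b+1-a>0$, $b\le \frac{N-p}{p}$. Set $\gamma=(p-1)a+b+1$ and consider the vector field $x|x|^{-\gamma}|u|^p$. Since $u\in C_0^\infty(\mathbb{R}^N\setminus\{0\})$, its divergence integrates to zero. Using $\operatorname{div}(x|x|^{-\gamma})=(N-\gamma)|x|^{-\gamma}$, this gives
\[
(N-\gamma)\int_{\mathbb{R}^N}\frac{|u|^p}{|x|^{\gamma}}\,dx=-p\int_{\mathbb{R}^N}|u|^{p-2}u\,\frac{\nabla u\cdot x}{|x|^{\gamma}}\,dx .
\]

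\textbf{Step 2 (Hölder).} Split the weight as
\[
|x|^{1-\gamma}=|x|^{-b}\cdot|x|^{-(p-1)a},
\]
and apply Hölder with exponents $p$ and $p'$. This yields
\[
\frac{|N-\gamma|}{p}\int\frac{|u|^p}{|x|^\gamma}\le\Big(\int\frac{|\nabla u|^p}{|x|^{pb}}\Big)^{1/p}\Big(\int\frac{|u|^p}{|x|^{pa}}\Big)^{(p-1)/p}.
\]
This is \eqref{LpCKN} with constant $\frac{|N-1-(p-1)a-b|}{p}$. Positivity of the constant, and hence existence of \emph{some} $C(p,N,a,b)>0$, can be handled the same way in general: either $N-\gamma\neq0$, or one combines this bound with the alternative Kelvin-type bound, as in Theorem~A.

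\textbf{Step 3 (equality cases and optimizers).} Equality forces two things:
\begin{itemize}
\item $\nabla u$ is parallel to $x$ with a fixed sign of $u\,\partial_r u$;
\item $|\nabla u|^p|x|^{-pb}$ is proportional to $|u|^p|x|^{-pa}$.
\end{itemize}
Together these say $u'(r)=t\,r^{b-a}u$, so $u=D\exp\big(\frac{t r^{b+1-a}}{b+1-a}\big)$.
\begin{itemize}
\item For $b+1-a>0$, integrability at infinity requires $t<0$.
\item One then checks that all three integrals converge at $0$. Each one reduces to finiteness of $\int_0 r^{N-1-\gamma}dr$, i.e. $N>\gamma$. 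This is exactly where $b\le \frac{N-p}{p}$ enters: combined with $a<b+1$ it gives $N-\gamma>0$, with the boundary cases handled directly.
\item Functions in $C_0^\infty(\mathbb{R}^N\setminus\{0\})$ approximating this profile (via cutoffs near $0$ and $\infty$) show the constant cannot be improved.
\end{itemize}
Uniqueness follows from the equality conditions of Hölder once we know $u$ is an optimizer in the natural weighted space.

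\textbf{Step 4 (the second regime).} For $b+1-a<0$, $b\ge\frac{N-p}{p}$, the same computation gives constant $\frac{\gamma-N}{p}>0$. The optimizers are the same exponentials, now with $t>0$, which decay because the exponent $b+1-a$ is negative.

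\textbf{Main obstacle.} The delicate point is justifying optimality: the extremal is not in $C_0^\infty(\mathbb{R}^N\setminus\{0\})$. One must show the truncation errors vanish, which needs the precise integrability conditions at the origin (using $N-\gamma>0$, plus the power behaviour of the other weights). I would verify this by explicit radial computation with logarithmic cutoffs near $0$.
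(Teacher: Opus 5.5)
\textbf{How this compares with the paper.} The paper does not prove Theorem~C; it quotes it from \cite{DFLL23}. The mechanism it relies on (reproduced in Section~5) starts from the same integration by parts as your Step~1 but closes it differently. Write $\gamma=(p-1)a+b+1$ and $\mathcal{R}_p(A,B)=|B|^p+(p-1)|A|^p-p|A|^{p-2}A\cdot B$, which is $\ge0$ by convexity of $|\cdot|^p$. Then one has the exact identity
\[
\int_{\mathbb{R}^N}\frac{|\nabla u|^p}{|x|^{pb}}dx+(p-1)\int_{\mathbb{R}^N}\frac{|u|^p}{|x|^{pa}}dx-(N-\gamma)\int_{\mathbb{R}^N}\frac{|u|^p}{|x|^{\gamma}}dx=\int_{\mathbb{R}^N}\frac{\mathcal{R}_p\left(-u|x|^{b-a-1}x,\nabla u\right)}{|x|^{pb}}dx .
\]
The product form follows by putting a free factor $\lambda^{a-b-1}$ in front of the vector field and choosing $\lambda$ optimally (an AM--GM step). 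The optimizers are then exactly the zeros of $\mathcal{R}_p$, i.e. solutions of $\nabla u=-\mu u|x|^{b-a-1}x$.

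Your route closes instead with pointwise Cauchy--Schwarz plus H\"older. It reaches the same constant $|N-\gamma|/p$ and, through the two equality conditions, the same ODE $u'=t r^{b-a}u$. For the sharp constants and optimizers in the two regimes it is a correct and somewhat shorter proof. What the remainder identity buys is a quantitative deficit: for $p\ge2$, $\mathcal{R}_p(A,B)\ge M_p|B-A|^p$ bounds the deficit below by $M_p\int_{\mathbb{R}^N}\bigl|\nabla\bigl(ue^{|x|^{b+1-a}/(b+1-a)}\bigr)\bigr|^p e^{-p|x|^{b+1-a}/(b+1-a)}|x|^{-pb}\,dx$. That is precisely what the paper feeds into its weighted $L^p$-Poincar\'e inequalities; H\"older discards this information.

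\textbf{Gap: the first assertion.} For $N\neq\gamma$ your Step~2 gives a positive constant. For $N=\gamma$, however, the appeal to ``the alternative Kelvin-type bound, as in Theorem~A'' does not carry over. In Theorem~A that bound comes from writing $u=|x|^{2b+2-N}v$: then $\int|\nabla u|^2|x|^{-2b}$ changes only by a change of weight, because the cross term integrates out exactly. The quantity $|\nabla(|x|^{\beta}v)|^p$ admits no such expansion when $p\neq2$, consistent with the paper's remark that sharp $L^p$ constants are open outside the two regimes.

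An elementary replacement works. Since $|x|^{-\gamma}=|x|^{-(p-1)a}|x|^{-(b+1)}$, H\"older gives
\[
\int|u|^p|x|^{-\gamma}\le\left(\int|u|^p|x|^{-pa}\right)^{(p-1)/p}\left(\int|u|^p|x|^{-p(b+1)}\right)^{1/p}.
\]
The last factor is controlled by the weighted Hardy inequality, which is just your own Step~2 bound in the case $a=b+1$. This yields $C(p,N,a,b)\ge\max\{|N-\gamma|,|N-p(b+1)|\}/p$, which is positive unless $\gamma=N=p(b+1)$, i.e. $a=b+1=N/p$.

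At that single point no argument can work. The inequality becomes $\int|\nabla u|^p|x|^{p-N}dx\ge C^p\int|u|^p|x|^{-N}dx$. For $u=\phi(\log|x|)$ it reads $\int_{\mathbb{R}}|\phi'|^p\ge C^p\int_{\mathbb{R}}|\phi|^p$, which fails for $\phi(t/L)$ as $L\to\infty$. So the ``for all $a,b$'' claim must exclude this point.

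\textbf{Minor points.} In the first regime $N-\gamma>0$ strictly, since $a<b+1$ gives $\gamma<p(b+1)\le N$. The borderline $b=\frac{N-p}{p}$ matters only for truncation: a cutoff at scale $\varepsilon$ near the origin costs $O(\varepsilon^{N-p(b+1)})$, so the logarithmic cutoff is needed exactly there. Near $0$ the gradient and $|x|^{-pa}$ integrals require $N>pa$, not $N>\gamma$; this follows from $\gamma-pa=b+1-a>0$.

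In the second regime the optimizers do not decay at infinity. They tend to $D$ there and vanish to infinite order at the origin. Integrability at infinity comes from $pa>\gamma>N$, and the truncation (logarithmic when $b=\frac{N-p}{p}$) must be done at infinity, not at $0$.

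Finally, your uniqueness argument tacitly uses that $\mathbb{R}^N\setminus\{0\}$ is connected. For $N=1$, different constants $D_\pm$ on the two half-lines also give equality.
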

\begin{remark}
Determining the optimal constants and extremal functions for the inequality \eqref{LpCKN} remains an open problem in parameter regions \((a,b)\) other than those identified in the theorem above.
\end{remark}

Additionally, the authors in \cite{DFLL23} established the following stability estimate:
\begin{align}\label{stabilityLpCKN}
&\left( \int_{\mathbb{R}^N} \frac{|\nabla u|^p}{|x|^{p b}} \, dx \right)^{\frac{1}{p}} 
\left( \int_{\mathbb{R}^N} \frac{|u|^p}{|x|^{p a}} \, dx \right)^{\frac{p-1}{p}} 
- \frac{N - 1 - (p - 1)a - b}{p} \int_{\mathbb{R}^N} \frac{|u|^p}{|x|^{(p - 1) a + b + 1}} \, dx \nonumber \\
&\quad \gtrsim \inf_{c \in \mathbb{R},~\lambda > 0}\int_{\mathbb{R}^N} 
\frac{\left| u - c \exp\left(-\frac{\lambda |x|^{b + 1 - a}}{b + 1 - a}\right) \right|^p}
{|x|^{(p - 1) a + b + 1}} \, dx,
\end{align}
for parameters satisfying
\[
p \geq 2, \quad 0 \leq b < \frac{N-p}{p}, \quad a \leq \frac{N b}{N - p}, \quad \text{and} \quad (p - 1) a + b + 1 = \frac{p b N}{N - p}.
\]
The first condition ensures control over the remainder term in the \(L^p\)-CKN inequality, while the latter three conditions stem from the applicability of an \(L^p\)-Poincaré inequality with log-concave measures.

Motivated by this discussion, we next investigate weighted $L^{p}$-Poincar\'e inequalities with Gaussian-type measures. More precisely, in the $L^{p}$ case, we establish the following result:

\begin{theorem}\label{TLpPoinUnified}
Let \(p > 1\), and \(a,b \in \mathbb{R}\). There exists a constant \(C_{PI}(a,b,p,N) > 0\) such that for all \(u \in C_0^\infty(\mathbb{R}^N)\), the following inequalities hold:

\begin{enumerate}
    \item[(i)] If \(1 + b - a > 0\) and \(b \leq \frac{N - p}{p}\), then
    \[
        \int_{\mathbb{R}^N} 
        \frac{|\nabla u|^p}{|x|^{p b}} 
        e^{-\frac{p |x|^{1 + b - a}}{1 + b - a}} \, dx
        \geq 
        C_{PI}(a,b,p,N)
        \inf_{c \in \mathbb{R}}
        \int_{\mathbb{R}^N} 
        \frac{|u - c|^p}{|x|^{1 + b + (p - 1)a}} 
        e^{-\frac{p |x|^{1 + b - a}}{1 + b - a}} \, dx.
    \]

    \item[(ii)] If \(1 + b - a < 0\) and \(b \geq \frac{N - p}{p}\), then
    \[
        \int_{\mathbb{R}^N} 
        \frac{|\nabla u|^p}{|x|^{p b}} 
        e^{\frac{p |x|^{1 + b - a}}{1 + b - a}} \, dx
        \geq 
        C_{PI}(a,b,p,N)
        \inf_{c \in \mathbb{R}}
        \int_{\mathbb{R}^N} 
        \frac{|u - c|^p}{|x|^{1 + b + (p - 1)a}} 
        e^{\frac{p |x|^{1 + b - a}}{1 + b - a}} \, dx.
    \]

    \item[(iii)] If \(1 + b - a < 0\) and \(b \leq \frac{N - p}{p}\), then
    \[
        \int_{\mathbb{R}^N} 
        \frac{|\nabla u|^p}{|x|^{2N - p b - 2p}} 
        e^{\frac{p |x|^{1 + b - a}}{1 + b - a}} \, dx
        \geq 
        C_{PI}(a,b,p,N)
        \inf_{c \in \mathbb{R}}
        \int_{\mathbb{R}^N} 
        \frac{|u - c|^p}{|x|^{2N + (p - 1)a + (1 - 2p)b - 2p + 1}} 
        e^{\frac{p |x|^{1 + b - a}}{1 + b - a}} \, dx.
    \]

    \item[(iv)] If \(1 + b - a > 0\) and \(b \geq \frac{N - p}{p}\), then
    \[
        \int_{\mathbb{R}^N} 
        \frac{|\nabla u|^p}{|x|^{2N - p b - 2p}} 
        e^{-\frac{p |x|^{1 + b - a}}{1 + b - a}} \, dx
        \geq 
        C_{PI}(a,b,p,N)
        \inf_{c \in \mathbb{R}}
        \int_{\mathbb{R}^N} 
        \frac{|u - c|^p}{|x|^{2N + (p - 1)a + (1 - 2p)b - 2p + 1}} 
        e^{-\frac{p |x|^{1 + b - a}}{1 + b - a}} \, dx.
    \]
\end{enumerate}
\end{theorem}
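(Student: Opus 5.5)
We only need existence of some positive constant, so sharpness is not required. The plan is to reduce cases (ii)--(iv) to case (i) by the Kelvin-type and inversion changes of variables the paper uses in the $L^2$ setting. Then we prove (i) by a change of variables that turns the measure into a radial measure on a possibly non-integer dimensional space, and apply an $L^p$ Poincar\'e inequality for a log-concave-type radial measure.

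\textbf{Step 1: change of variables for (i).} Set $\gamma=1+b-a>0$ and write $u(x)=v(y)$ with $y=|x|^{\gamma-1}x$, so that $|y|=r^\gamma$ with $r=|x|$. The weight $e^{-p|x|^\gamma/\gamma}$ becomes $e^{-p|y|/\gamma}$. The radial derivative transforms as $\partial_r u=\gamma r^{\gamma-1}\partial_\rho v$, while the angular gradient keeps the factor $1/r$ against $1/\rho$. Tracking the powers, the right-hand weight $|x|^{-(1+b+(p-1)a)}dx$ and the left-hand weight $|x|^{-pb}|\nabla u|^p dx$ both become of the form $\rho^{M-1}d\rho\,d\sigma$ for one common effective exponent $M$. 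The angular part of the gradient acquires a factor $\gamma^{-1}$ or $1$, which we bound below by $\min(1,\gamma^{\pm1})$.

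\textbf{Step 2: the Poincar\'e inequality in the new variables.} It therefore suffices to prove
\[
\int_0^\infty\!\!\int_{S^{N-1}}|\nabla v|^p\,d\mu\ \ge\ c\inf_c\int|v-c|^p\,d\mu,\qquad d\mu=e^{-p\rho/\gamma}\rho^{M-1}d\rho\,d\sigma .
\]
This step splits into a radial and an angular estimate, combined as follows.
\begin{itemize}
\item \emph{Radial part.} The one-dimensional measure $e^{-p\rho/\gamma}\rho^{M-1}d\rho$ has $M>0$, which should follow exactly from $b\le (N-p)/p$. This measure is a gamma-type, log-concave measure on $(0,\infty)$ when $M\ge1$. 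Its $L^p$ Poincar\'e inequality then follows either from Muckenhoupt's criterion (a Hardy-type condition with finite constant, because the tails are exponential and $\rho^{M-1}$ is integrable at $0$) or from the log-concave $L^p$ Poincar\'e inequality.
\item \emph{Angular part.} We use the $L^p$ Poincar\'e inequality on $S^{N-1}$ for each fixed $\rho$, with constant scaling like $\rho^{-p}$.
\item \emph{Combination.} Write $v=\bar v(\rho)+(v-\bar v)$, where $\bar v$ is the spherical mean. Since $\bar v$ is a spherical average, Jensen gives $\int|\partial_\rho\bar v|^p\le\int|\partial_\rho v|^p$, so the radial inequality controls $\bar v-c$. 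The deviation $v-\bar v$ satisfies $\int|v-\bar v|^p d\mu\lesssim\int\rho^p|\nabla_\sigma v/\rho|^p d\mu$. The factor $\rho^p$ is harmless near $0$ but not at infinity. For large $\rho$, we treat each fixed angle and use the radial estimate for $v(\cdot,\sigma)-\bar v$, combined with a Hardy-type inequality $\int_R^\infty|w|^pe^{-p\rho/\gamma}\rho^{M-1}\lesssim\int|w'|^p\cdots+\text{boundary}$; alternatively, we note that exponential decay gives $\int\rho^p|f|^pd\mu\lesssim\int(|f|^p+|f'|^p)d\mu$.
\end{itemize}

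\textbf{Step 3: the remaining cases.} For (ii), apply the inversion $x\mapsto x/|x|^2$ (Kelvin-type, without amplitude factor, since constants must be preserved), mapping parameters $(a,b)\mapsto(N-a,\,N-2-b)$ as in Theorem~\ref{TPoincare} with $p$-adapted exponents. Cases (iii) and (iv) are rewritten the same way; the weights there are exactly those produced by substituting $|x|^{2b+2-N}$-type parameters, e.g.\ $(a,b)\mapsto(-a+2b+2,\,b)$ adapted to $p$. In each case we check that $\gamma$ changes sign appropriately and that $M>0$.

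\textbf{Main obstacle.} The hardest point is the large-$\rho$ angular control in Step 2 together with the case $0<M<1$, where the radial measure is not log-concave. I would first try the Muckenhoupt criterion, which needs no convexity, and handle $\rho$ large by the weighted Hardy inequality that the exponential weight supplies.
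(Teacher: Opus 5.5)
There is a genuine gap, and it is in Step 1: the two weights do not become the same. With $\rho=r^{\gamma}$ one has $dr=\gamma^{-1}r^{1-\gamma}\,d\rho$ and $|\nabla_x u|\simeq r^{\gamma-1}|\nabla_y v|$.
\begin{itemize}
\item On the gradient side the power of $r$ is $p(\gamma-1)-pb+N-\gamma$.
\item On the $L^p$ side it is $N-1-b-(p-1)a-\gamma$.
\end{itemize}
These differ by $p\gamma-(p-1)(1+b-a)=\gamma$, i.e.\ by exactly one factor of $\rho$. So the energy carries $\rho^{M}e^{-p\rho/\gamma}$ and the $L^p$ term carries $\rho^{M-1}e^{-p\rho/\gamma}$, with $M=(N-1-b-(p-1)a)/\gamma$. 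A quick check: for $p=2$, $a=-1$, $b=0$ both original weights are $e^{-|x|^2}$, and $\rho=r^2$ turns them into $e^{-\rho}\rho^{N/2}$ versus $e^{-\rho}\rho^{N/2-1}$. That is exactly the Laguerre pair in the proof of Theorem \ref{lemma 0}.

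So what you actually must prove is the degenerate inequality $\int|\nabla v|^p\,\rho\,d\mu\gtrsim\inf_c\int|v-c|^p\,d\mu$. No Poincar\'e inequality for $\mu$ alone (log-concave or one-weight Muckenhoupt) implies it, since $\rho\,d\mu\ll d\mu$ near $\rho=0$.

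The missing idea is to choose the power so that the weights coincide: $s\propto r^{(p-1)\gamma/p}$, i.e.\ $r^{\gamma}/\gamma=s^{p/(p-1)}$. This is exactly the paper's substitution. Both sides then carry $e^{-ps^{p/(p-1)}}s^{A-1}\,ds$ with $A=\frac{p}{p-1}M$. From $a<1+b$ and $N\ge p(1+b)$ one gets $(p-1)\gamma(A-1)=pN-(2p-1)(1+b)-(p-1)^2a>p\bigl(N-p(1+b)\bigr)\ge 0$. Hence $A>1$, the measure is log-concave, and the case $0<M<1$ you were worried about never arises. A genuinely two-weight Muckenhoupt criterion, checked at $0$ and at $\infty$, would also rescue the radial step, but that is not what you set up.

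A second, smaller problem is the far-field angular control. Your ``alternatively'' is false. For $d\mu=e^{-p\rho/\gamma}\rho^{M-1}d\rho$ there is no bound $\int\rho^p|f|^p\,d\mu\lesssim\int(|f|^p+|f'|^p)\,d\mu$: take $f=e^{\epsilon\rho}$ with $\epsilon\uparrow 1/\gamma$, and the ratio grows like $(1/\gamma-\epsilon)^{-p}$. An exponential weight gives no polynomial gain, only $\int_R^\infty|f|^p\,d\mu\lesssim\int_R^\infty|f'|^p\,d\mu$ plus a boundary term at $R$ that you would still have to control.

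Your spherical-mean splitting with Jensen is fine; the paper uses essentially the same device for $1<p<2$. It reduces everything to
\[
\int(|\partial_r w|^p+|x|^{-p}|w|^p)\,|x|^{-pb}e^{-p|x|^{\gamma}/\gamma}dx\gtrsim\int|w|^p\,|x|^{-1-b-(p-1)a}e^{-p|x|^{\gamma}/\gamma}dx
\]
for $w=v-\bar v$. The paper proves this ray by ray by compactness: the radial Poincar\'e inequality makes $w_k-c_k$ small, and the $|x|^{-p}|w_k|^p$ term forces $c_k\to0$.

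Your inversion step for (ii)--(iv) is as in the paper, with the $L^p$ parameter map $(a,b)\mapsto(\frac{2N}{p}-a,\frac{2N}{p}-2-b)$.
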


Since we work in the \(L^p\) setting, the tools used in the \(L^2\) case are no longer available. To overcome this difficulty, our proof of Theorem \ref{TLpPoinUnified} relies on a careful radial--angular decomposition and a delicate change of variables, which reduces the radial part to a one-dimensional log-concave setting. The desired result then follows by a technically involved argument.

Using Theorem~\ref{TLpPoinUnified}, we next obtain stability estimates for the \(L^p\)-Caffarelli--Kohn--Nirenberg inequalities for \(p \geq 2\) in the full range of parameters for which sharp constants and optimizers are known.

\begin{theorem}\label{TLpCKN Stability}
Let \(p \geq 2\), and \(a,b \in \mathbb{R}\). There exists a constant \(C(a,b,p,N) > 0\) such that for all 
\(u \in C_0^\infty(\mathbb{R}^N \setminus \{0\}) \setminus \{0\}\), the following hold:

\begin{enumerate}
    \item[(i)] If \(1 + b - a > 0\) and \(b \leq \dfrac{N - p}{p}\), then
    \begin{align*}
        & \int_{\mathbb{R}^N} \frac{|\nabla u|^{p}}{|x|^{p b}} \, dx 
        + (p - 1) \int_{\mathbb{R}^N} \frac{|u|^{2}}{|x|^{2 a}} \, dx
        - \left(N - 1 - (p - 1)a - b\right)
        \int_{\mathbb{R}^N} \frac{|u|^{p}}{|x|^{(p-1)a + b + 1}} \, dx \\
        & \geq C(a,b,p,N)
        \inf_{c \in \mathbb{R}}
        \int_{\mathbb{R}^N}
        \frac{\big|u - c e^{-\frac{|x|^{1 + b - a}}{1 + b - a}}\big|^p}
        {|x|^{1 + b + (p - 1)a}} \, dx
    \end{align*}
    and 
    \begin{align*}
        & \left( \int_{\mathbb{R}^N} \frac{|\nabla u|^{p}}{|x|^{p b}} \, dx \right)^{\frac{1}{p}}
        \left( \int_{\mathbb{R}^N} \frac{|u|^{p}}{|x|^{p a}} \, dx \right)^{\frac{p - 1}{p}}
        - \frac{N - (p - 1)a - b - 1}{p}
        \int_{\mathbb{R}^N} \frac{|u|^{p}}{|x|^{(p - 1)a + b + 1}} \, dx \\
        & \geq \frac{C(a,b,p,N)}{p}
        \inf_{c \in \mathbb{R},\, \lambda > 0}
        \int_{\mathbb{R}^N}
        \frac{\big|u - c e^{-\frac{\lambda |x|^{1 + b - a}}{1 + b - a}}\big|^p}
        {|x|^{1 + b + (p - 1)a}} \, dx.
    \end{align*}

    \item[(ii)] If \(1 + b - a < 0\) and \(b \geq \dfrac{N - p}{p}\), then
    \begin{align*}
        & \int_{\mathbb{R}^N} \frac{|\nabla u|^{p}}{|x|^{p b}} \, dx 
        + (p - 1) \int_{\mathbb{R}^N} \frac{|u|^{2}}{|x|^{2 a}} \, dx
        - \left(1 + (p - 1)a + b - N\right)
        \int_{\mathbb{R}^N} \frac{|u|^{p}}{|x|^{(p-1)a + b + 1}} \, dx \\
        & \geq C(a,b,p,N)
        \inf_{c \in \mathbb{R}}
        \int_{\mathbb{R}^N}
        \frac{\big|u - c e^{\frac{|x|^{1 + b - a}}{1 + b - a}}\big|^p}
        {|x|^{1 + b + (p - 1)a}} \, dx
    \end{align*}
and
    \begin{align*}
        & \left( \int_{\mathbb{R}^N} \frac{|\nabla u|^{p}}{|x|^{p b}} \, dx \right)^{\frac{1}{p}}
        \left( \int_{\mathbb{R}^N} \frac{|u|^{p}}{|x|^{p a}} \, dx \right)^{\frac{p - 1}{p}}
        - \frac{1 + (p - 1)a + b - N}{p}
        \int_{\mathbb{R}^N} \frac{|u|^{p}}{|x|^{(p - 1)a + b + 1}} \, dx \\
        & \geq \frac{C(a,b,p,N)}{p}
        \inf_{c \in \mathbb{R},\, \lambda > 0}
        \int_{\mathbb{R}^N}
        \frac{\big|u - c e^{\frac{\lambda |x|^{1 + b - a}}{1 + b - a}}\big|^p}
        {|x|^{1 + b + (p - 1)a}} \, dx.
    \end{align*}
\end{enumerate}
\end{theorem}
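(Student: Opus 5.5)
The plan is to write $u$ as a multiple of the expected optimizer, $u=v\,e^{-\varphi}$, reduce the additive deficit to a pointwise convexity remainder in $\nabla v$, and then apply the weighted $L^p$-Poincaré inequality of Theorem~\ref{TLpPoinUnified}. The multiplicative version then follows by scaling. I read the middle term of the first inequality in (i) as $(p-1)\int_{\mathbb{R}^N}|u|^p|x|^{-pa}\,dx$. The exponent $2$ printed there looks like a typo, because only the $p$-power is consistent with the product form and with scaling.

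For (i), set $\beta=1+b-a>0$ and $\varphi(x)=|x|^\beta/\beta$, so that $\nabla\varphi=x|x|^{b-a-1}$. Define the vector fields $A=|x|^{-b}\nabla u$ and $B=-u\,x\,|x|^{-a-1}$. A direct computation gives $|B|^p=|u|^p|x|^{-pa}$ and
\[
|B|^{p-2}B\cdot A=-|u|^{p-2}u\,\nabla u\cdot x\,|x|^{-(p-1)a-b-1}.
\]
Integrating by parts, which is legitimate since $u\in C_0^\infty(\mathbb{R}^N\setminus\{0\})$, gives
\[
p\int_{\mathbb{R}^N}|B|^{p-2}B\cdot A\,dx=\bigl(N-1-(p-1)a-b\bigr)\int_{\mathbb{R}^N}\frac{|u|^p}{|x|^{(p-1)a+b+1}}\,dx.
\]
Hence the left-hand side of the first inequality in (i) equals exactly $\int_{\mathbb{R}^N}\bigl(|A|^p+(p-1)|B|^p-p|B|^{p-2}B\cdot A\bigr)\,dx$. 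For $p\ge 2$ I use the elementary vector inequality
\[
|A|^p\ge |B|^p+p|B|^{p-2}B\cdot(A-B)+c_p|A-B|^p,
\]
which comes from the uniform convexity of $|\cdot|^p$. It bounds the integrand from below by $c_p|A-B|^p$.

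The key observation is that $A-B=|x|^{-b}\bigl(\nabla u+u\nabla\varphi\bigr)=|x|^{-b}e^{-\varphi}\nabla v$. Therefore the deficit is at least $c_p\int_{\mathbb{R}^N}|\nabla v|^p|x|^{-pb}e^{-p\varphi}\,dx$. Theorem~\ref{TLpPoinUnified}(i) bounds this from below by
\[
c_pC_{PI}\inf_{c}\int_{\mathbb{R}^N}|v-c|^p e^{-p\varphi}|x|^{-1-b-(p-1)a}\,dx,
\]
and since $|v-c|^pe^{-p\varphi}=|u-ce^{-\varphi}|^p$ this is the first claim. The theorem is stated for $C_0^\infty(\mathbb{R}^N)$, but $v$ is smooth and compactly supported away from the origin, so it applies, by density if needed. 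For (ii) I take $\varphi=-|x|^\beta/\beta$ with $\beta<0$ and $B=u\,x\,|x|^{-a-1}$. The integration by parts then yields the constant $1+(p-1)a+b-N\ge 0$, and the same computation together with Theorem~\ref{TLpPoinUnified}(ii) finishes.

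For the multiplicative inequalities I apply the additive one to the dilates $u_\mu(x)=u(\mu x)$. Write $X=\int_{\mathbb{R}^N}|\nabla u|^p|x|^{-pb}\,dx$ and $Y=\int_{\mathbb{R}^N}|u|^p|x|^{-pa}\,dx$. After the change of variables each of the three terms picks up a power of $\mu$, and by the scaling invariance the prefactors are consistent. Choosing $\mu$ so that the two left-hand terms are equal turns $X_\mu+(p-1)Y_\mu$ into $pX^{1/p}Y^{(p-1)/p}$ after normalizing, because equality holds in Young's inequality. Dividing by $p$ gives the constant $C(a,b,p,N)/p$. The profile $e^{-\varphi}$ becomes $e^{-\lambda\varphi}$ with $\lambda=\mu^{-\beta}$, so taking the infimum over $\lambda>0$ gives the second estimate.

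The main obstacle I expect is the sharp form and bookkeeping of the vector inequality for $p\ge 2$ with remainder $c_p|A-B|^p$, where $c_p>0$ can be taken as, for instance, $2^{1-p}/(p-1)$ or a similar explicit value. A secondary obstacle is checking that the powers of $\mu$ in the three terms match exactly under the scaling, including the correct sign conventions in case (ii).
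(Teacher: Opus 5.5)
Your proposal is correct and follows essentially the paper's route. The paper takes the identity and the bound $\mathcal{R}_p\ge M_p|\cdot|^p$ from \cite{DFLL23}, which you rederive via integration by parts and the $p\ge 2$ convexity inequality. It then writes the remainder as $\int_{\mathbb{R}^N}|x|^{-pb}|\nabla(ue^{\varphi})|^p e^{-p\varphi}\,dx$, applies the weighted $L^p$-Poincaré inequality, and gets the multiplicative form by scaling; there it scales the vector field by $\lambda^{a-b-1}$ instead of dilating $u$, which is equivalent. Your reading of the middle term as $(p-1)\int_{\mathbb{R}^N}|u|^p|x|^{-pa}\,dx$ is also what the paper's proof actually uses.
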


The paper is organized as follows. In Section 2, we establish the sharp weighted Gaussian $L^2$-Poincar\'e inequalities and their stability estimates. In Section 3, we use these results to prove the stability estimates for the $L^2$-Caffarelli--Kohn--Nirenberg inequalities. In Section 4, we derive the corresponding weighted $L^p$-Poincar\'e inequalities. Finally, in Section 5, we obtain the stability estimates for the $L^p$-Caffarelli--Kohn--Nirenberg inequalities.
\section{Sharp Weighted $L^2$-Poincar\'{e} Inequalities With Gaussian Type Measures and their stability-Proofs of Theorems \ref{TPoincare}, \ref{Tkey} and \ref{Tkey1}}
The proofs of  Theorem \ref{TPoincare}, Theorem \ref{Tkey}, Theorem \ref{Tkey1} and their stability will be established via the following theorems.
\begin{theorem}\label{lemma 0}
    Let $\alpha > 0$. For any function $v \in W^{1,2}((0, \infty),e^{-\frac{s^2}{2}} s^{\alpha - 1}ds)$, we have
    $$\int_0^\infty |v'(s)|^2 e^{-s^2/2} s^{\alpha-1} ds \geq 2 \inf_{c \in \mathbb{R}} \int_0^\infty |v(s) - c|^2 e^{-s^2/2} s^{\alpha-1} ds.$$ The constant $2$ is sharp and can be attained by the functions $$v(s) = a_0 + a_1 s^2,$$
    with $a_0, a_1 \in \mathbb{R}$. Moreover, 
    \begin{align*}
        \int_0^\infty |v'(s)|^2 e^{-s^2/2} s^{\alpha-1} ds &- 2 \inf_{c \in \mathbb{R}} \int_0^\infty |v(s) - c|^2 e^{-s^2/2} s^{\alpha-1} ds\\
        &\geq\frac{1}{2}\inf_{d \in \mathbb{R}}\int_0^\infty |v'(s)-ds|^2 e^{-s^2/2} s^{\alpha-1} ds\\
        &\geq 2\inf_{c,d \in \mathbb{R}}\int_0^\infty |v(s)-c-ds^2|^2 e^{-s^2/2} s^{\alpha-1} ds.
    \end{align*}
\end{theorem}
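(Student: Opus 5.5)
The plan is to reduce everything to the spectral theory of the Laguerre operator. I will substitute $t=s^2/2$ and write $v(s)=w(t)$. Then $v'(s)=s\,w'(t)$, so $|v'(s)|^2=2t\,|w'(t)|^2$. Up to a common positive constant depending only on $\alpha$, the measure $e^{-s^2/2}s^{\alpha-1}\,ds$ becomes $d\mu_\beta:=e^{-t}t^{\beta}\,dt$ on $(0,\infty)$, where $\beta=\alpha/2-1>-1$ because $\alpha>0$. Since this constant multiplies both sides of every inequality in the statement, it can be dropped. The statement therefore reads
\[
2\int_0^\infty t|w'|^2\,d\mu_\beta\;\ge\;2\inf_c\int_0^\infty |w-c|^2\,d\mu_\beta ,
\]
together with the corresponding stability chain.

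Next I expand $w=\sum_{n\ge0}c_nL_n^{(\beta)}$ in the generalized Laguerre polynomials. These form a complete orthogonal basis of $L^2(d\mu_\beta)$ with $h_n:=\|L_n^{(\beta)}\|^2_{L^2(d\mu_\beta)}=\Gamma(n+\beta+1)/n!$. I will use the derivative identity $(L_n^{(\beta)})'=-L_{n-1}^{(\beta+1)}$. Since $t\,d\mu_\beta=d\mu_{\beta+1}$, the derivatives $\{(L_n^{(\beta)})'\}_{n\ge1}$ are orthogonal in $L^2(t\,d\mu_\beta)$, with squared norms $\Gamma(n+\beta+1)/(n-1)!=n\,h_n$. (Equivalently, $L_n^{(\beta)}$ is an eigenfunction with eigenvalue $n$ of the operator $-t\,\partial_t^2-(\beta+1-t)\partial_t$.) Parseval then gives
\[
\int t|w'|^2\,d\mu_\beta=\sum_{n\ge1}n\,c_n^2h_n,\qquad \inf_c\int|w-c|^2\,d\mu_\beta=\sum_{n\ge1}c_n^2h_n ,
\]
where the infimum is attained at $c=c_0$.

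The main inequality follows at once: $2\sum_{n\ge1} n c_n^2h_n\ge 2\sum_{n\ge1} c_n^2h_n$. Equality holds exactly when $c_n=0$ for all $n\ge2$, that is, when $w$ is affine in $t$, i.e. $v=a_0+a_1s^2$. For the first stability bound, note that $v'(s)-ds=s\,(w'(t)-d)$ and that the constant $d$ is a multiple of $(L_1^{(\beta)})'$. Hence
\[
\inf_d\int|v'-ds|^2e^{-s^2/2}s^{\alpha-1}\,ds\;\propto\; 2\sum_{n\ge2}n\,c_n^2h_n ,
\]
since the optimal $d$ kills the $n=1$ mode. The deficit equals $2\sum_{n\ge2}(n-1)c_n^2h_n$, and $(n-1)\ge n/2$ for $n\ge2$ gives the factor $\tfrac12$. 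For the second bound, the span of $\{1,s^2\}$ is the span of $L_0^{(\beta)},L_1^{(\beta)}$, so
\[
\inf_{c,d}\int|v-c-ds^2|^2\,d\mu\propto\sum_{n\ge2}c_n^2h_n .
\]
Then $\tfrac12\cdot2\sum_{n\ge2}n c_n^2h_n\ge2\sum_{n\ge2}c_n^2h_n$ because $n\ge2$. Sharpness of the constant $2$ is witnessed by $L_1^{(\beta)}$.

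I expect the main obstacle to be the functional-analytic justification rather than the algebra. One must show that for $v\in W^{1,2}$ of the weighted measure, the termwise differentiation of the Laguerre series is legitimate. Concretely, the coefficients of $w'$ in the basis $\{L_{n-1}^{(\beta+1)}\}$ of $L^2(d\mu_{\beta+1})$ must be exactly $-c_n$, with no boundary contributions at $t=0$ or $t=\infty$. I would first try to prove this for polynomials and compactly supported smooth functions. There, one integration by parts against $L_{n-1}^{(\beta+1)}t^{\beta+1}e^{-t}$ has vanishing boundary terms, because of the factor $t^{\beta+1}$ with $\beta+1>0$ and the decay of $e^{-t}$. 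I would then extend to the full space by density, using completeness of the Laguerre polynomials in $L^2(d\mu_\beta)$ and $L^2(d\mu_{\beta+1})$.
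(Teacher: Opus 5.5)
Your proposal is correct and follows essentially the same route as the paper. Both arguments use the substitution $t=s^2/2$, expand in the generalized Laguerre polynomials $L_k^{(\alpha/2-1)}$ with $\partial_t L_k^{(\alpha/2-1)}=-L_{k-1}^{(\alpha/2)}$, apply Parseval, and then compare the weight $k$ against $1$ (and use $k-1\ge k/2$) on the modes $k\ge 2$. Two points are slightly more careful than the paper, which bounds the deficit by each stability quantity separately and simply invokes density of $C_0^\infty(0,\infty)$. First, you verify the middle link of the stated chain directly, via $\tfrac12\cdot 2\sum_{n\ge2} n c_n^2h_n\ge 2\sum_{n\ge2}c_n^2h_n$. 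Second, your integration-by-parts justification of termwise differentiation handles the boundary at $t=0$ explicitly.
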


We now give the  Proof of Theorem \ref{Tkey}.
\begin{proof}  By a density argument, we can assume that $v \in C_0^{\infty}(0,\infty)$. By setting $t = s^2/2$, then $s = \sqrt{2t}$ and $ds = \frac{1}{\sqrt{2t}} dt$. Hence, defining $w(t) = v(\sqrt{2t})$, we obtain
    \begin{align*}
        \int_0^\infty |v'(s)|^2 e^{-s^2/2} s^{\alpha-1} ds 
        &= \int_0^\infty |v'(\sqrt{2t})|^2 e^{-t} (\sqrt{2t})^{\alpha-1} \frac{dt}{\sqrt{2t}} \\
        &= \int_0^\infty |w'(t)|^2 e^{-t} (\sqrt{2t})^\alpha dt \\
        &= 2^{\alpha/2} \int_0^\infty |w'(t)|^2 e^{-t} t^{\alpha/2} dt.
    \end{align*}
    Moreover,
    \begin{align*}
        \int_0^\infty |v(s)-c|^2 e^{-s^2/2} s^{\alpha-1} ds 
        &= \int_0^\infty |w(t) - c|^2 e^{-t} (\sqrt{2t})^{\alpha-2} dt \\
        &= 2^{\alpha/2 - 1} \int_0^\infty |w(t) - c|^2 e^{-t} t^{\alpha/2 - 1} dt.
    \end{align*}

    It is known that the generalized Laguerre polynomials $\{L^{\alpha/2 - 1}_k(x)\}_{k=0,1,2,\ldots}$ form an orthogonal basis of $L^2((0,\infty), e^{-t} t^{\alpha/2 - 1} dt)$, with
    $$\int_0^\infty L^{\alpha/2 - 1}_k(t) L^{\alpha/2 - 1}_j(t) e^{-t} t^{\alpha/2 - 1} dt = \frac{\Gamma(k+\alpha/2)}{k!} \delta_{kj},$$
    and $\partial_t L^{\alpha/2 - 1}_k(t) = - L^{\alpha/2}_{k-1}(t)$.

    Assuming the expansion $w(t) = \sum_{k=0}^\infty a_k L^{\alpha/2 - 1}_k(t)$, we have
    $$w'(t) = - \sum_{k=1}^\infty a_k L^{\alpha/2}_{k-1}(t),$$
    since $L^{\alpha/2 - 1}_0(x) = 1$. Hence,
    \begin{align*}
        \int_0^\infty |w'(t)|^2 e^{-t} t^{\alpha/2} dt 
        &= \sum_{k=1}^\infty |a_k|^2 \int_0^\infty (L^{\alpha/2}_{k-1}(t))^2 e^{-t} t^{\alpha/2} dt \\
        &= \sum_{k=1}^\infty k |a_k|^2 \frac{\Gamma(k + \alpha/2)}{k!}.
    \end{align*}
    Similarly,
    \begin{align*}
        \inf_{c \in \mathbb{R}} \int_0^\infty |w(t) - c|^2 e^{-t} t^{\alpha/2 - 1} dt 
        &= \int_0^\infty |w(t) - a_0|^2 e^{-t} t^{\alpha/2 - 1} dt \\
        &= \sum_{k=1}^\infty |a_k|^2 \int_0^\infty (L^{\alpha/2 - 1}_k(t))^2 e^{-t} t^{\alpha/2 - 1} dt \\
        &= \sum_{k=1}^\infty |a_k|^2 \frac{\Gamma(\alpha/2 + k)}{k!}.
    \end{align*}
    Therefore,
    \begin{align*}
        \int_0^\infty |v'(s)|^2 e^{-s^2/2} s^{\alpha-1} ds 
        &= 2^{\alpha/2} \int_0^\infty |w'(t)|^2 e^{-t} t^{\alpha/2} dt \\
        &= 2^{\alpha/2} \sum_{k=1}^\infty k |a_k|^2 \frac{\Gamma(k + \alpha/2)}{k!} \\
        &\geq 2^{\alpha/2} \sum_{k=1}^\infty |a_k|^2 \frac{\Gamma(k + \alpha/2)}{k!} \\
        &= 2^{\alpha/2} \inf_{c \in \mathbb{R}} \int_0^\infty |w(t) - c|^2 e^{-t} t^{\alpha/2 - 1} dt \\
        &= 2^{\alpha/2} \inf_{c \in \mathbb{R}} \int_0^\infty \left|w\left(\frac{s^2}{2}\right) - c\right|^2 e^{-s^2/2} \left(\frac{s^2}{2}\right)^{\alpha/2 - 1} s ds \\
        &= 2 \inf_{c \in \mathbb{R}} \int_0^\infty |v(s) - c|^2 e^{-s^2/2} s^{\alpha - 1} ds,
    \end{align*}
    as desired. Equality holds when $w(t) = a_0 + a_1 L^{\alpha/2 - 1}_1(t)$, or equivalently,
    $$v(s) = a_0 + a_1 L^{\alpha/2 - 1}_1\left(\frac{s^2}{2}\right),$$
    with $a_0, a_1 \in \mathbb{R}$. 
    
    Moreover, 
    \begin{align*}
        &\int_0^\infty |v'(s)|^2 e^{-s^2/2} s^{\alpha-1} ds -2 \inf_{c \in \mathbb{R}} \int_0^\infty |v(s) - c|^2 e^{-s^2/2} s^{\alpha - 1} ds \\
        &= 2^{\alpha/2} \sum_{k=2}^\infty \left(k-1\right) |a_k|^2 \frac{\Gamma(k + \alpha/2)}{k!} \\
        &\geq 2^{\alpha/2-1} \sum_{k=2}^\infty k |a_k|^2 \frac{\Gamma(k + \alpha/2)}{k!} \\
        &= 2^{\alpha/2-1} \int_0^\infty |w'(t)+a_1 |^2 e^{-t} t^{\alpha/2} dt \\
        &=\frac{1}{2}\int_0^\infty |v'(s)+a_1s|^2 e^{-s^2/2} s^{\alpha-1} ds.
    \end{align*}
    Similarly,
    \begin{align*}
        &\int_0^\infty |v'(s)|^2 e^{-s^2/2} s^{\alpha-1} ds -2 \inf_{c \in \mathbb{R}} \int_0^\infty |v(s) - c|^2 e^{-s^2/2} s^{\alpha - 1} ds \\
        &= 2^{\alpha/2} \sum_{k=2}^\infty \left(k-1\right) |a_k|^2 \frac{\Gamma(k + \alpha/2)}{k!} \\
        &\geq 2^{\alpha/2} \sum_{k=2}^\infty |a_k|^2 \frac{\Gamma(k + \alpha/2)}{k!} \\
        &= 2^{\alpha/2} \int_0^\infty |w(t)-a_0-a_1L_1^{\alpha/2-1}(t) |^2 e^{-t} t^{\alpha/2} dt \\
        &\geq 2\inf_{c,d \in \mathbb{R}}\int_0^\infty |v(s)-c-ds^2|^2 e^{-s^2/2} s^{\alpha-1} ds.
    \end{align*}
\end{proof}

\begin{theorem}\label{lemma 0.1}
    Let $A \geq 2$ and $C > 0$. Then
\begin{align}\label{auxest}
    &\int_0^\infty \left[ (w'(s))^2 + C \left( \frac{w(s)}{s} \right)^2 \right] e^{-s^2/2} s^{A-1} ds \nonumber\\
    &\geq \frac{ - (A - 2) + \sqrt{(A - 2)^2 + 4 C} }{2} \int_0^\infty w^2(s) e^{-s^2/2} s^{A-1} ds.
\end{align}  The equality holds if and only if $w(s)=c s^{\frac{-(A-2)+\sqrt{(A-2)^2+4C}}{2}}$.  Moreover
\begin{align*}
    &\int_0^\infty \left[(w'(s))^2+C\left(\dfrac{w(s)}{s}\right)^2\right]e^{-s^2/2}s^{A-1}ds \\
    &- \dfrac{-(A-2)+\sqrt{(A-2)^2+4C}}{2}\int_0^\infty w^2(s)s^{A-1}e^{-s^2/2}ds \\
    &\geq 2 \inf_{c \in \mathbb{R}} \int_0^\infty \left\vert w(s) - cs^{\frac{-(A-2)+\sqrt{(A-2)^2+4C}}{2}}\right\vert ^2 e^{-s^2/2} s^{A-1} ds.
\end{align*}
\end{theorem}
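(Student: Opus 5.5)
Set $\beta := \frac{-(A-2)+\sqrt{(A-2)^2+4C}}{2}$. Since $A\ge 2$ and $C>0$, we have $\beta>0$ and $\beta^2+(A-2)\beta = C$. The plan is a ground-state substitution that reduces \eqref{auxest} to Theorem \ref{lemma 0}.

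By density we may take $w\in C_0^\infty(0,\infty)$ and write $w(s)=s^\beta v(s)$. Then $w' = s^\beta(v' + \beta v/s)$, so
\[
(w')^2 + C\frac{w^2}{s^2} = s^{2\beta}\Big( (v')^2 + \frac{2\beta v v'}{s} + (\beta^2+C)\frac{v^2}{s^2}\Big).
\]
Set $\alpha := A+2\beta>0$ and $d\mu := e^{-s^2/2}s^{\alpha-1}\,ds$. The cross term is $\beta\int (v^2)'\, e^{-s^2/2} s^{\alpha-2}\,ds$. Integrating by parts (boundary terms vanish for compactly supported $v$) gives
\[
-\beta\int v^2\big((\alpha-2)s^{-2} - 1\big)\,d\mu .
\]
Collecting the $s^{-2}$ terms yields the coefficient $\beta^2+C-\beta(\alpha-2) = C-\beta^2-(A-2)\beta = 0$, by the choice of $\beta$. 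This leaves the exact identity
\[
\int\Big[(w')^2+C\frac{w^2}{s^2}\Big]e^{-s^2/2}s^{A-1}ds = \int (v')^2\,d\mu + \beta\int v^2\,d\mu,
\qquad
\int w^2 e^{-s^2/2}s^{A-1}ds = \int v^2\,d\mu .
\]
Hence the deficit in \eqref{auxest} equals $\int (v')^2\,d\mu \ge 0$. Equality holds iff $v$ is constant, i.e. $w = c s^\beta$.

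For the stability statement we need a lower bound for $\int (v')^2 d\mu$ in terms of $\inf_c \int |v-c|^2 d\mu$. Theorem \ref{lemma 0} with this $\alpha$ gives
\[
\int (v')^2\,d\mu \ge 2\inf_c\int |v-c|^2\,d\mu .
\]
Since $|v-c|^2\,d\mu = |w - c s^\beta|^2 e^{-s^2/2}s^{A-1}ds$, this is exactly the claimed inequality.

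The main obstacle is justifying the density and the integration by parts for general $w$ in the natural weighted space. We must show that $v = s^{-\beta}w$ lies in $W^{1,2}(d\mu)$ and that the boundary term $\beta v^2 e^{-s^2/2}s^{\alpha-2}$ vanishes at $0$ and $\infty$. Near $0$, $v^2 s^{\alpha-2} = w^2 s^{A-2}$, and finiteness of $\int C w^2 s^{A-3}\,ds$ forces $\liminf_{s\to0} w^2 s^{A-2} = 0$. I would handle this by working with $C_0^\infty(0,\infty)$ functions and approximating, using the finiteness of the $C(w/s)^2$ term to control the cutoffs near the origin.
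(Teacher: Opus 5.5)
Your proposal is correct and follows essentially the same route as the paper, which also writes $w=s^{\alpha}\eta$ with $\alpha$ the positive root of $\alpha^2+(A-2)\alpha=C$. It integrates the cross term by parts to obtain the exact identity that the deficit equals $\int_0^\infty(\eta')^2e^{-s^2/2}s^{A+2\alpha-1}\,ds$, and then applies Theorem~\ref{lemma 0} with exponent $A+2\alpha$. Your remark on the boundary term at $s=0$, including the $\liminf$ argument from finiteness of $\int w^2s^{A-3}\,ds$, justifies a point the paper leaves implicit.
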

\begin{proof} (Proof of Theorem \ref{Tkey1}.) Let $w(s)=s^\alpha\eta(s)$ where $\alpha>0$ will be chosen later. Then,
\[
w'(s) = \eta'(s)s^\alpha + \alpha s^{\alpha-1} \eta(s),
\]
which implies that
\begin{align*}
    &\int_0^\infty (w'(s))^2 e^{-s^2/2}s^{A-1}ds\\
    &=\int_0^\infty (\eta'(s))^2s^{A+2\alpha-1}e^{-s^2/2}ds+\alpha^2\int_0^\infty (w(s))^2s^{A-3}e^{-s^2/2}ds+\alpha\int_0^\infty[\eta^2(s)]'s^{A+2\alpha-2}e^{-s^2/2}ds\\
    &=\int_0^\infty (\eta'(s))^2s^{A+2\alpha-1}e^{-s^2/2}ds+(\alpha^2-\alpha(A+2\alpha-2))\int_0^\infty w^2(s)s^{A-3}e^{-s^2/2}ds\\
    &+\alpha \int_0^\infty w^2(s)s^{A-1}e^{-s^2/2}ds.
\end{align*}
Thus,
\begin{align*}
    &\int_0^\infty \left[(w'(s))^2+C\left(\dfrac{w(s)}{s}\right)^2\right]e^{-s^2/2}s^{A-1}ds\\
    &=\int_0^\infty (\eta'(s))^2s^{A+2\alpha-1}e^{-s^2/2}ds+(\alpha^2-\alpha(A+2\alpha-2)+C)\int_0^\infty w^2(s)s^{A-3}e^{-s^2/2}ds\\
    &+\alpha \int_0^\infty w^2(s)s^{A-1}e^{-s^2/2}ds.
\end{align*}
Now, we will choose $\alpha>0$ such that $\alpha^2-\alpha(A+2\alpha-2)+C = 0$. Then, $\alpha$ is determined by
$$\alpha=\dfrac{-(A-2)+\sqrt{(A-2)^2+4C}}{2}.$$
With such $\alpha$, we obtain
\begin{align*}
    &\int_0^\infty \left[(w'(s))^2+C\left(\dfrac{w(s)}{s}\right)^2\right]e^{-s^2/2}s^{A-1}ds \\
    &\geq \dfrac{-(A-2)+\sqrt{(A-2)^2+4C}}{2}\int_0^\infty w^2(s)s^{A-1}e^{-s^2/2}ds,
\end{align*}
as desired. In particular, the equality holds iff $w(s)=ks^{\alpha}=ks^{\frac{-(A-2)+\sqrt{(A-2)^2+4C}}{2}}$.

Moreover, by Theorem \ref{lemma 0}

\begin{align*}
    &\int_0^\infty \left[(w'(s))^2+C\left(\dfrac{w(s)}{s}\right)^2\right]e^{-s^2/2}s^{A-1}ds \\
    &- \dfrac{-(A-2)+\sqrt{(A-2)^2+4C}}{2}\int_0^\infty w^2(s)s^{A-1}e^{-s^2/2}ds \\
    &= \int_0^\infty (\eta'(s))^2s^{A+2\alpha-1}e^{-s^2/2}ds \\
    &\geq 2 \inf_{c \in \mathbb{R}} \int_0^\infty |\eta(s) - c|^2 e^{-s^2/2} s^{A+2\alpha-1} ds\\
    &\geq 2 \inf_{c \in \mathbb{R}} \int_0^\infty \left\vert\frac{w(s)}{s^\alpha} - c\right\vert ^2 e^{-s^2/2} s^{A+2\alpha-1} ds\\
    &\geq 2 \inf_{c \in \mathbb{R}} \int_0^\infty \left\vert w(s) - cs^{\frac{-(A-2)+\sqrt{(A-2)^2+4C}}{2}}\right\vert ^2 e^{-s^2/2} s^{A-1} ds.
\end{align*}
\end{proof}

We also prove that, in the $L^2$-Poincaré type inequality, the $L^2$ stability is equivalent to the gradient stability.

\begin{theorem}\label{lemma 0.2}
    Let $\mu$ and $\nu$ be two finite measures on $\mathbb{R}^N$. We define $L_0$ as the space of constant functions, $L_1$ the space of all functions such that
\begin{equation}\label{GPoin}
    \int_{\mathbb{R}^N} |\nabla u|^2\,d\mu 
    = \lambda 
    \int_{\mathbb{R}^N} |u|^2\,d\nu,
\end{equation}
for some $\lambda>0$. Suppose that $W$ is another space such that $L_0, L_1,$ and $W$ are orthogonal to each other with respect to $W^{1,2}(d\mu)$ and $L^2(d\nu)$. Then, for some $\alpha>0$, the followings are equivalent:
\begin{enumerate}
    \item For all $u=u_0+u_1+w$ with $u_0 \in L_0, u_1 \in L_1$, $w\in W$,
    \[
        \int_{\mathbb{R}^N} |\nabla u|^2\,d\mu 
        - \lambda \inf_{c \in \mathbb{R}} \int_{\mathbb{R}^N} |u - c|^2\,d\nu
        \geq \alpha \inf_{c \in \mathbb{R}, \phi \in L_1} 
        \int_{\mathbb{R}^N} |u- c - \phi|^2\,d\nu.
    \]
    
    \item For all $u=u_0+u_1+w$ with $u_0 \in L_0, u_1 \in L_1$, $w\in W$,
    \[
        \int_{\mathbb{R}^N} |\nabla u|^2\,d\mu 
        - \lambda \inf_{c \in \mathbb{R}} \int_{\mathbb{R}^N} |u - c|^2\,d\nu
        \geq \frac{\alpha}{\lambda + \alpha} 
        \inf_{\phi \in L_1} 
        \int_{\mathbb{R}^N} |\nabla (u - \phi)|^2\,d\mu.
    \]
\end{enumerate}
\end{theorem}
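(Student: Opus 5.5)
Write $u=u_0+u_1+w$ with $u_0\in L_0$, $u_1\in L_1$, $w\in W$. The plan is to turn both statements into inequalities about $w$ alone by exploiting the assumed orthogonality in both $W^{1,2}(d\mu)$ and $L^2(d\nu)$.

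First I simplify each term. Constants have zero gradient, and $L_1\perp W$ in the gradient inner product, so
\[
\int|\nabla u|^2\,d\mu=\int|\nabla u_1|^2\,d\mu+\int|\nabla w|^2\,d\mu .
\]
For the $\nu$-terms I use orthogonality of $L_0$, $L_1$, $W$ in $L^2(d\nu)$. Minimizing $\int|u-c|^2\,d\nu$ over $c$ then gives $c=u_0$ (the components of $u_1$ and $w$ along constants vanish), so
\[
\inf_c\int|u-c|^2\,d\nu=\int|u_1|^2\,d\nu+\int|w|^2\,d\nu .
\]
By \eqref{GPoin}, $\int|\nabla u_1|^2\,d\mu=\lambda\int|u_1|^2\,d\nu$. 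Hence the deficit
\[
\delta(u):=\int|\nabla u|^2\,d\mu-\lambda\inf_c\int|u-c|^2\,d\nu
\]
equals $\int|\nabla w|^2\,d\mu-\lambda\int|w|^2\,d\nu$.

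Next I compute the two distances. $L_1$ is a linear space; I take $L_1\cup\{0\}$ if needed, as in the paper, where $L_1$ is the eigenspace spanned by the optimizers. Then
\[
\inf_{c,\phi}\int|u-c-\phi|^2\,d\nu=\int|w|^2\,d\nu,
\]
attained at $c=u_0$, $\phi=u_1$, by Pythagoras in $L^2(d\nu)$. Similarly, since $\nabla(u-\phi)=\nabla(u_1-\phi)+\nabla w$ with $u_1-\phi\in L_1\perp W$ in the gradient inner product,
\[
\inf_\phi\int|\nabla(u-\phi)|^2\,d\mu=\int|\nabla w|^2\,d\mu .
\]

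With $A=\int|\nabla w|^2\,d\mu$ and $B=\int|w|^2\,d\nu$, the two statements become
\[
\text{(1)}\quad A-\lambda B\ge \alpha B,\qquad\qquad \text{(2)}\quad A-\lambda B\ge \frac{\alpha}{\lambda+\alpha}A,
\]
each required for all $w\in W$. Each is equivalent to $A\ge(\lambda+\alpha)B$:
\begin{itemize}
\item (1) is literally $A\ge(\lambda+\alpha)B$.
\item (2) rearranges to $\frac{\lambda}{\lambda+\alpha}A\ge\lambda B$; dividing by $\lambda>0$ gives $A\ge(\lambda+\alpha)B$.
\end{itemize}
So the two statements are equivalent, and in fact equivalent for each individual $u$.

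The main obstacle is rigor in the reduction, not the algebra. I need to check that the infima really are attained at the orthogonal components, which uses orthogonality in both inner products simultaneously. I also need to interpret ``the space of all functions satisfying \eqref{GPoin}'' as a linear space containing $0$. The finiteness of $\mu$ and $\nu$ ensures constants lie in both spaces.
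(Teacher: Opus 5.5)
Your proof is correct and takes essentially the same route as the paper. Both arguments use the orthogonality in $L^2(d\nu)$ and in the gradient inner product of $W^{1,2}(d\mu)$ to reduce statements (1) and (2) to the single inequality $\int_{\mathbb{R}^N}|\nabla w|^2\,d\mu\ge(\lambda+\alpha)\int_{\mathbb{R}^N}|w|^2\,d\nu$ on the $W$-component. The paper first restricts (1) or (2) to $u=w\in W$ to extract this inequality and then lifts it back to general $u$, whereas you write the deficit and the two distances exactly as $A-\lambda B$, $B$ and $A$; this is a slight streamlining that makes the equivalence visible for each individual $u$.
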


\begin{proof}
Assume (1) holds. For any $w \in W$, we have by (1) that 
\begin{align}
\int_{\mathbb{R}^N} \left\vert \nabla w\right\vert^2 d\mu &\geq \inf_{c \in \mathbb{R}}\lambda \int_{\mathbb{R}^N} \left\vert w-c\right\vert^2 d\nu+ \inf_{c \in \mathbb{R}, \phi \in L_1} \alpha \int_{\mathbb{R}^N} \left\vert w-c-\phi\right\vert^2 d\nu \nonumber \\
&= (\lambda + \alpha )\int_{\mathbb{R}^N} \left\vert w\right\vert^2 d\nu. \label{1}
\end{align}
Now, for any $u=u_0+u_1 + w$ with $u_0 \in L_0$, $u_1 \in L_1$ and $w \in W$, we have
\begin{align*}
\int_{\mathbb{R}^N} \left\vert \nabla u\right\vert^2 d\mu &= \int_{\mathbb{R}^N} \left\vert \nabla u_1\right\vert^2 d\mu + \int_{\mathbb{R}^N} \left\vert \nabla w\right\vert^2 d\mu \\
&= \lambda\int_{\mathbb{R}^N} \left\vert u_1\right\vert^2 d\nu + \int_{\mathbb{R}^N} \left\vert \nabla w\right\vert^2 d\mu \\
&= \lambda\int_{\mathbb{R}^N} \left\vert u-u_0\right\vert^2 d\nu -\lambda\int_{\mathbb{R}^N} \left\vert w\right\vert^2 d\nu+ \int_{\mathbb{R}^N} \left\vert \nabla w\right\vert^2 d\mu \\
&\geq \lambda\inf_{c \in \mathbb{R}}\int_{\mathbb{R}^N} \left\vert u-c\right\vert^2 d\nu -\frac{\lambda}{\lambda + \alpha}\int_{\mathbb{R}^N} \left\vert \nabla w\right\vert^2 d\mu+ \int_{\mathbb{R}^N} \left\vert \nabla w\right\vert^2 d\mu \\
&= \lambda\inf_{c \in \mathbb{R}}\int_{\mathbb{R}^N} \left\vert u-c\right\vert^2 d\nu+ \frac{\alpha}{\lambda + \alpha} \inf_{\phi \in L_1} \int_{\mathbb{R}^N} \left\vert \nabla (u-\phi)\right\vert^2 d\mu.
\end{align*}

Now, assume that (2) holds. Then for any $w \in W$, we have by (2) that 
\begin{align*}
    \int_{\mathbb{R}^N} \left\vert \nabla w\right\vert^2 d\mu &\geq \lambda\inf_{c \in \mathbb{R}}  \int_{\mathbb{R}^N} \left\vert w-c\right\vert^2 d\nu+ \frac{\alpha}{\lambda + \alpha} \inf_{\phi \in L_1}\int_{\mathbb{R}^N} \left\vert \nabla (w-\phi)\right\vert^2 d\mu \nonumber\\
    &=\lambda \int_{\mathbb{R}^N} \left\vert w\right\vert^2 d\nu+ \frac{\alpha}{\lambda + \alpha} \int_{\mathbb{R}^N} \left\vert \nabla w\right\vert^2 d\mu.
\end{align*}
Equivalently $$\int_{\mathbb{R}^N} \left\vert \nabla w\right\vert^2 d\mu \geq (\lambda + \alpha)\int_{\mathbb{R}^N} \left\vert w\right\vert^2 d\nu. $$
Now, for any $u=u_0+u_1 + w$ with $u_0 \in L_0, u_1 \in L_1$, and $w \in W$, we get
\begin{align*}
    \inf_{\phi \in L_1}\int_{\mathbb{R}^N} \left\vert \nabla (u-\phi)\right\vert^2 d\mu &= \int_{\mathbb{R}^N} \left\vert \nabla w\right\vert^2 d\mu \\
    &\geq (\lambda + \alpha)\int_{\mathbb{R}^N} \left\vert w\right\vert^2 d\nu \\
     &= (\lambda + \alpha)\inf_{c \in \mathbb{R}, \phi \in L_1}\int_{\mathbb{R}^N} \left\vert u-c-\phi\right\vert^2 d\nu.
\end{align*}
Therefore, by (2),
\begin{align*}
   \int_{\mathbb{R}^N} \left\vert \nabla u\right\vert^2 d\mu &\geq \inf_{c \in \mathbb{R}} \lambda \int_{\mathbb{R}^N} \left\vert u-c\right\vert^2 d\nu+ \inf_{\phi \in L_1}\frac{\alpha}{\lambda + \alpha} \int_{\mathbb{R}^N} \left\vert \nabla (u-\phi)\right\vert^2 d\nu \\
   &\geq \lambda\inf_{c \in \mathbb{R}}  \int_{\mathbb{R}^N} \left\vert u-c\right\vert^2 d\nu+ \alpha\inf_{c \in \mathbb{R}, \phi \in L_1} \int_{\mathbb{R}^N} \left\vert u-c-\phi\right\vert^2 d\nu,
\end{align*}
as desired.
\end{proof}

We are now ready to prove our weighted Poincar\'e inequalities (Theorem \ref{TPoincare}) and their stability estimates.

\begin{theorem}\label{lemma 1}
    Assume $1 + b - a > 0$ and $b \leq \frac{N-2}{2}$, we have
    \begin{equation}\label{Poin1}
        \int_{\mathbb{R}^N} |\nabla v|^2 \frac{e^{-\frac{2|x|^{1+b-a}}{1+b-a}}}{|x|^{2b}} \, dx 
        \geq C_{PI}(a, b,N) \inf_{c \in \mathbb{R}} \int_{\mathbb{R}^N} |v - c|^2 \frac{e^{-\frac{2|x|^{1+b-a}}{1+b-a}}}{|x|^{1+b+a}}  \, dx.
    \end{equation} Here the optimal constant is \[
C_{PI}(a,b,N) = \min\left\{ C_1:=2(1+b-a),\, D_1:= \sqrt{(N-2-2b)^2 + 4(N-1)} - (N-2-2b) \right\}.
\] If $C_{PI}(a,b,N) = C_1$, then the equality happens with $
v(x) = a_0 + a_1 |x|^{1+b-a}.
$
Otherwise, if
$
C_{PI}(a,b,N) = D_1,
$
then the equality occurs at
$$
v(x)=c+|x|^{\frac{D_1}{2}-1} \mathbf{d}\cdot x
$$
where $c \in \mathbb{R}, \mathbf{d} \in \mathbb{R}^N$. 

Moreover, if $C_1<D_1$, we have
\begin{align*}
        &\int_{\mathbb{R}^N} |\nabla v|^2 \frac{e^{-\frac{2|x|^{1+b-a}}{1+b-a}}}{|x|^{2b}} \, dx 
        - C_1 \inf_{c \in \mathbb{R}} \int_{\mathbb{R}^N} |v - c|^2 \frac{e^{-\frac{2|x|^{1+b-a}}{1+b-a}}}{|x|^{1+b+a}}  \, dx \\
        &\geq \min\left\{\frac{1}{2},1-\frac{C_1}{D_1}\right\} \inf_{d \in \mathbb{R}} \int_{\mathbb{R}^N} \left\vert\nabla\left(v -  d|x|^{1+b-a}\right)\right\vert^2 \frac{e^{-\frac{2|x|^{1+b-a}}{1+b-a}}}{|x|^{2b}} dx,
    \end{align*}
and
\begin{align*}
        &\int_{\mathbb{R}^N} |\nabla v|^2 \frac{e^{-\frac{2|x|^{1+b-a}}{1+b-a}}}{|x|^{2b}} \, dx 
        - C_1 \inf_{c \in \mathbb{R}} \int_{\mathbb{R}^N} |v - c|^2 \frac{e^{-\frac{2|x|^{1+b-a}}{1+b-a}}}{|x|^{1+b+a}}  \, dx \\
        &\geq \min\left\{C_1,D_1-C_1\right\} \inf_{c,d \in \mathbb{R}} \int_{\mathbb{R}^N} \left\vert v -c-  d|x|^{1+b-a} \right\vert^2 \frac{e^{-\frac{2|x|^{1+b-a}}{1+b-a}}}{|x|^{a+b+1}} dx,
    \end{align*}
If $C_1>D_1$, then 
\begin{align*}
        &\int_{\mathbb{R}^N} |\nabla v|^2 \frac{e^{-\frac{2|x|^{1+b-a}}{1+b-a}}}{|x|^{2b}} \, dx 
        - D_1 \inf_{c \in \mathbb{R}} \int_{\mathbb{R}^N} |v - c|^2 \frac{e^{-\frac{2|x|^{1+b-a}}{1+b-a}}}{|x|^{1+b+a}}  \, dx \\ 
        &\geq \min\left\{C_1- D_1,D_2-D_1 \right\} \inf_{c \in \mathbb{R},\mathbf{d} \in \mathbb{R}^N}\int_{\mathbb{R}^N} \left\vert v-c - |x|^{\frac{D_1}{2}-1}\mathbf{d} \cdot x\right\vert^2 \frac{e^{-\frac{2|x|^{1+b-a}}{1+b-a}}}{|x|^{a+b+1}} \, dx
    \end{align*}
    and \begin{align*}
        &\int_{\mathbb{R}^N} |\nabla v|^2 \frac{e^{-\frac{2|x|^{1+b-a}}{1+b-a}}}{|x|^{2b}} \, dx 
        - D_1 \inf_{c \in \mathbb{R}} \int_{\mathbb{R}^N} |v - c|^2 \frac{e^{-\frac{2|x|^{1+b-a}}{1+b-a}}}{|x|^{1+b+a}}  \, dx \\
        &\geq \min\left\{1- \frac{D_1}{C_1},1-\frac{D_1}{D_2} \right\} \inf_{c \in \mathbb{R},\mathbf{d} \in \mathbb{R}^N}\int_{\mathbb{R}^N} \left\vert \nabla(v-c - |x|^{\frac{D_1}{2}-1}\mathbf{d} \cdot x)\right\vert^2 \frac{e^{-\frac{2|x|^{1+b-a}}{1+b-a}}}{|x|^{2b}} \, dx.
    \end{align*}
    If $C_1=D_1$, then 
      \begin{align*}
        &\int_{\mathbb{R}^N} |\nabla v|^2 \frac{e^{-\frac{2|x|^{1+b-a}}{1+b-a}}}{|x|^{2b}} \, dx 
        - D_1 \inf_{c \in \mathbb{R}} \int_{\mathbb{R}^N} |v - c|^2 \frac{e^{-\frac{2|x|^{1+b-a}}{1+b-a}}}{|x|^{1+b+a}}  \, dx \\ 
        &\geq \min\left\{D_1,D_2-D_1 \right\}\inf_{c, d \in \mathbb{R},\mathbf{d} \in \mathbb{R}^N}\int_{\mathbb{R}^N} \left\vert v-c -d|x|^{1+b-a}- |x|^{\frac{D_1}{2}-1}\mathbf{d} \cdot x\right\vert^2 \frac{e^{-\frac{2|x|^{1+b-a}}{1+b-a}}}{|x|^{a+b+1}} \, dx
    \end{align*}
    and
 \begin{align*}
        &\int_{\mathbb{R}^N} |\nabla v|^2 \frac{e^{-\frac{2|x|^{1+b-a}}{1+b-a}}}{|x|^{2b}} \, dx 
        - D_1 \inf_{c \in \mathbb{R}} \int_{\mathbb{R}^N} |v - c|^2 \frac{e^{-\frac{2|x|^{1+b-a}}{1+b-a}}}{|x|^{1+b+a}}  \, dx \\
        &\geq \min\left\{\frac{1}{2},1-\frac{D_1}{D_2} \right\}\inf_{c, d \in \mathbb{R},\mathbf{d} \in \mathbb{R}^N}\int_{\mathbb{R}^N} \left\vert \nabla(v-c -d|x|^{1+b-a}- |x|^{\frac{D_1}{2}-1}\mathbf{d} \cdot x)\right\vert^2 \frac{e^{-\frac{2|x|^{1+b-a}}{1+b-a}}}{|x|^{2b}} \, dx.
    \end{align*}   
    Here \[
D_2 = \sqrt{(N - 2 - 2b)^2 + 8N} - (N - 2 - 2b).
\]
\end{theorem}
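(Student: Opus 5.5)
The plan is to reduce the statement to the one-dimensional results, Theorem \ref{lemma 0} and Theorem \ref{lemma 0.1}, by combining polar coordinates, a spherical harmonic decomposition, and a change of the radial variable.

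\textbf{Step 1: reduction to a Gaussian radial weight.} Set $\beta=1+b-a>0$ and write $x=r\sigma$. Expand
\[
v(r\sigma)=\sum_{k\ge0}\sum_j v_{k,j}(r)Y_{k,j}(\sigma),
\]
where $Y_{k,j}$ are spherical harmonics with $-\Delta_{\mathbb S^{N-1}}Y_{k,j}=k(k+N-2)Y_{k,j}$. Both sides of \eqref{Poin1} then split orthogonally across the modes. The gradient side becomes
\[
\sum_{k,j}\int_0^\infty\Big(|v_{k,j}'|^2+\tfrac{k(k+N-2)}{r^2}v_{k,j}^2\Big)e^{-2r^\beta/\beta}r^{N-1-2b}\,dr,
\]
and the right side carries the weight $e^{-2r^\beta/\beta}r^{N-2-b-a}$. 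The infimum over $c$ only affects the $k=0$ mode.

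Next substitute $s=\frac{2}{\sqrt\beta}r^{\beta/2}$, so that $s^2/2=2r^\beta/\beta$, and put $w(s)=v_{k,j}(r)$. A direct computation, with $dr/r=\frac{2}{\beta}\,ds/s$, gives the following.
\begin{itemize}
\item The first-order term becomes a constant times $\beta\int|w'|^2e^{-s^2/2}s^{A-1}ds$, where $A=\frac{2(N-2b-2)}{\beta}+2$, so $A\ge2$ exactly because $b\le\frac{N-2}{2}$.
\item The right-hand side becomes the same constant times $\frac{\beta}{4}\cdot 4\int w^2e^{-s^2/2}s^{A-1}ds$, up to a harmless bookkeeping factor. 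The exponent $a+b+1$ is exactly the one matching $|x|^{-2b}\cdot|x|^{-2(\beta-1)}$, which is why the weights align.
\item The angular term becomes $\frac{4k(k+N-2)}{\beta^2}\int w^2 s^{-2}e^{-s^2/2}s^{A-1}ds$.
\end{itemize}
I expect to carry the constants carefully here, normalising so that the mode-$0$ problem is precisely Theorem \ref{lemma 0} with $\alpha=A$ and an overall factor $\beta$.

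\textbf{Step 2: mode-by-mode bounds.}
\begin{itemize}
\item For $k=0$, Theorem \ref{lemma 0} yields the constant $2\cdot\beta$ (i.e.\ $C_1$), with extremal $w=a_0+a_1s^2$, i.e.\ $v=a_0+a_1|x|^{\beta}$.
\item For $k\ge1$, Theorem \ref{lemma 0.1} with $C=4k(k+N-2)/\beta^2$ gives the constant
\[
\beta\cdot\frac{-(A-2)+\sqrt{(A-2)^2+4C}}{2}.
\]
Since $\beta(A-2)/2=N-2-2b$, this equals $\sqrt{(N-2-2b)^2+4k(k+N-2)}-(N-2-2b)$, which is increasing in $k$.
\item At $k=1$ this is $D_1$, and at $k=2$ it is $D_2$, since $4\cdot2N=8N$. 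The extremal for $k=1$ is $s^{\alpha}$ with $s^\alpha\propto r^{D_1/2}$, giving $v=|x|^{D_1/2-1}\mathbf d\cdot x$.
\end{itemize}
Hence the sharp constant is $\min\{C_1,D_1\}$, with the stated extremals.

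\textbf{Step 3: stability.} Write $v=u_0+u_1+w$ with $u_0$ constant, $u_1$ in the extremal space, and $w$ the rest. In each of the three cases, the deficit on every remaining mode is at least (gap) $\times$ ($L^2$ norm of that mode).
\begin{itemize}
\item In the radial mode beyond the first Laguerre level, the gap is $2C_1-C_1$ by the Laguerre expansion in Theorem \ref{lemma 0}, since the next radial eigenvalue is $2C_1$.
\item If the minimum is $C_1$, the whole $k=1$ mode contributes $D_1-C_1$.
\item If the minimum is $D_1$, the radial mode contributes $C_1-D_1$, and the higher $k=1$ radial pieces contribute via the stability part of Theorem \ref{lemma 0.1}.
\item Modes $k\ge2$ contribute at least $D_2-D_1$ by monotonicity in $k$.
\end{itemize}
This gives the $L^2$ stability inequalities. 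Applying Theorem \ref{lemma 0.2}, whose orthogonality hypotheses hold because the decomposition is orthogonal in both inner products, converts each $L^2$ bound with constant $\alpha$ into a gradient bound with constant $\alpha/(\lambda+\alpha)$, producing the factors $1-C_1/D_1$, $1-D_1/D_2$, $1/2$, and so on.

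\textbf{Main obstacle.} The main obstacle is the precise bookkeeping in Step 1: checking that the exponent conversions make $A$ the same for both sides and that the constants come out as exactly $C_1$ and $D_k$. A second delicate point is that, for $k=1$ with $D_1$ minimal, the higher radial components within $k=1$ need a gap. Theorem \ref{lemma 0.1} gives the gap $2\beta$ in $\eta$, and one must confirm this is at least $\min\{C_1-D_1,D_2-D_1\}$. Since $2\beta=C_1>C_1-D_1$, this holds.
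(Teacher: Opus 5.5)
Your proposal is correct and is essentially the paper's proof. It uses the same substitution $s^2/2=2r^{\beta}/\beta$, which reduces the radial mode to Theorem~\ref{lemma 0} (constant $2\beta=C_1$) and each mode $k\ge1$ to Theorem~\ref{lemma 0.1} (constant $D_k$). It also has the same mode-wise gaps $C_1$, $D_1-C_1$, $C_1-D_1$, $D_2-D_1$ for the $L^2$ stability, and it passes to gradient stability through Theorem~\ref{lemma 0.2}. The paper instead obtains the $C_1<D_1$ gradient bound directly from \eqref{rad1} and \eqref{k1}, with the same constant $\min\{\frac12,1-\frac{C_1}{D_1}\}$.

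Two small points:
\begin{itemize}
\item You assert the within-mode orthogonality needed for Theorem~\ref{lemma 0.2}: that the $L^2(d\nu)$-projection onto the extremal space is also orthogonal in $\int\nabla u\cdot\nabla v\,d\mu$. The paper checks this explicitly in Step~6, via the identity $N-1-\frac{D_1^2}{4}-\frac{D_1(N-2b-2)}{2}=0$.
\item Your bookkeeping heuristic is off. The exponent should read $|x|^{-2b}\cdot|x|^{\beta-2}=|x|^{-(a+b+1)}$. Set $K=(\beta/4)^{(N-2b-2)/\beta}$. The two sides then become $\frac{\beta}{2}K\int|w'|^2e^{-s^2/2}s^{A-1}ds$ and $\frac12K\int w^2e^{-s^2/2}s^{A-1}ds$. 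So no extra factor $\beta$ sits on the right, which is consistent with your final constant $C_1=2\beta$.
\end{itemize}
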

\begin{proof} The proof is divided into several steps.\\
{\bf Step 1}. First, consider the radial case. If $v$ is radial, then $v(x) = V(r)$ with $r = |x|$. Thus,
\[
\int_{\mathbb{R}^N} |\nabla v|^2 \frac{e^{-\frac{2|x|^{1+b-a}}{1+b-a}}}{|x|^{2b}} \, dx = \omega_{N-1} \int_0^\infty |V'(r)|^2 e^{-\frac{2r^{1+b-a}}{1+b-a}} r^{N-1-2b} dr.
\]

Set $\dfrac{s^2}{2} = \dfrac{2r^{1+b-a}}{1+b-a}$, which implies
$$r = \left(\frac{1+b-a}{4}\right)^{\frac{1}{1+b-a}} s^{\frac{2}{1+b-a}},$$
and 
$$dr = \left(\frac{1+b-a}{4}\right)^{\frac{1}{1+b-a}} \frac{2}{1+b-a} s^{\frac{2}{1+b-a} - 1} ds.$$
Then,
\begin{align*}
    &\int_{\mathbb{R}^N} |\nabla v|^2 \frac{e^{-\frac{2|x|^{1+b-a}}{1+b-a}}}{|x|^{2b}} \, dx \\
    &= \omega_{N-1} \left(\frac{1+b-a}{4}\right)^{\frac{N-2b}{1+b-a}} \frac{2}{1+b-a} 
       \int_0^\infty \left\vert v'\left(\left(\frac{1+b-a}{4}\right)^{\frac{1}{1+b-a}} s^{\frac{2}{1+b-a}}\right)\right\vert^2 e^{-s^2 / 2} s^{\frac{2(N-2b)}{1+b-a} - 1} ds.
\end{align*}
Define 
$$w(s) = v\left(\left(\frac{1+b-a}{4}\right)^{\frac{1}{1+b-a}} s^{\frac{2}{1+b-a}}\right),$$
so
$$v'\left(\left(\frac{1+b-a}{4}\right)^{\frac{1}{1+b-a}} s^{\frac{2}{1+b-a}}\right) 
  = \left(\frac{1+b-a}{4}\right)^{-\frac{1}{1+b-a}} \frac{1+b-a}{2} s^{1 - \frac{2}{1+b-a}} w'(s).$$
 
Since $a < b + 1 \leq \frac{N}{2}$, we have $a + b + 1 < N$. Using Theorem \ref{lemma 0} with 
$$\alpha = \frac{2(N - a - b - 1)}{1 + b - a} > 0,$$
we get
\begin{align*}
    &\int_{\mathbb{R}^N} |\nabla v|^2 \frac{e^{-\frac{2|x|^{1+b-a}}{1+b-a}}}{|x|^{2b}} \, dx \\
    &= 2 \omega_{N-1} \left(\frac{1+b-a}{4}\right)^{\frac{N - a - b - 1}{1 + b - a}} 
       \int_0^\infty |w'(s)|^2 e^{-s^2/2} s^{\frac{2(N - a - b - 1)}{1 + b - a} - 1} ds \\
    &\geq 4 \omega_{N-1} \left(\frac{1+b-a}{4}\right)^{\frac{N - a - b - 1}{1 + b - a}} 
       \inf_{c \in \mathbb{R}} \int_0^\infty |w(s) - c|^2 e^{-s^2/2} s^{\frac{2(N - a - b - 1)}{1 + b - a} - 1} ds \\
    &= 2 (1 + b - a) \inf_{c \in \mathbb{R}} \int_{\mathbb{R}^N} |v(x) - c|^2 \frac{e^{-\frac{2|x|^{1+b-a}}{1+b-a}}}{|x|^{a + b + 1}} dx,
\end{align*}
where we used the substitution $s = \left(\frac{4}{1+b-a}\right)^{1/2} r^{\frac{1+b-a}{2}}$. This establishes the estimate for the radial case. Moreover, by Theorem \ref{lemma 0},
\begin{align}\label{rad1}
    &\int_{\mathbb{R}^N} |\nabla v|^2 \frac{e^{-\frac{2|x|^{1+b-a}}{1+b-a}}}{|x|^{2b}} \, dx - 2 (1 + b - a) \inf_{c \in \mathbb{R}} \int_{\mathbb{R}^N} |v(x) - c|^2 \frac{e^{-\frac{2|x|^{1+b-a}}{1+b-a}}}{|x|^{a + b + 1}} dx \nonumber\\
    &=2 \omega_{N-1} \left(\frac{1+b-a}{4}\right)^{\frac{N - a - b - 1}{1 + b - a}} 
       \int_0^\infty |w'(s)|^2 e^{-s^2/2} s^{\frac{2(N - a - b - 1)}{1 + b - a} - 1} ds \nonumber\\
    &- 4 \omega_{N-1} \left(\frac{1+b-a}{4}\right)^{\frac{N - a - b - 1}{1 + b - a}} 
       \inf_{c \in \mathbb{R}} \int_0^\infty |w(s) - c|^2 e^{-s^2/2} s^{\frac{2(N - a - b - 1)}{1 + b - a} - 1} ds \nonumber\\
       &\geq \omega_{N-1} \left(\frac{1+b-a}{4}\right)^{\frac{N - a - b - 1}{1 + b - a}} 
       \inf_{d \in \mathbb{R}}\int_0^\infty |w'(s)-ds|^2 e^{-s^2/2} s^{\frac{2(N - a - b - 1)}{1 + b - a} - 1} ds \nonumber\\
       &=\frac{1}{2}\inf_{d \in \mathbb{R}}\int_{\mathbb{R}^N} |\nabla (v-d|x|^{1+b-a})|^2 \frac{e^{-\frac{2|x|^{1+b-a}}{1+b-a}}}{|x|^{2b}} \, dx.
\end{align}
Also, 
\begin{align}\label{rad2}
    &\int_{\mathbb{R}^N} |\nabla v|^2 \frac{e^{-\frac{2|x|^{1+b-a}}{1+b-a}}}{|x|^{2b}} \, dx - 2 (1 + b - a) \inf_{c \in \mathbb{R}} \int_{\mathbb{R}^N} |v(x) - c|^2 \frac{e^{-\frac{2|x|^{1+b-a}}{1+b-a}}}{|x|^{a + b + 1}} dx\nonumber\\
    &=2 \omega_{N-1} \left(\frac{1+b-a}{4}\right)^{\frac{N - a - b - 1}{1 + b - a}} 
       \int_0^\infty |w'(s)|^2 e^{-s^2/2} s^{\frac{2(N - a - b - 1)}{1 + b - a} - 1} ds \nonumber\\
    &- 4 \omega_{N-1} \left(\frac{1+b-a}{4}\right)^{\frac{N - a - b - 1}{1 + b - a}} 
       \inf_{c \in \mathbb{R}} \int_0^\infty |w(s) - c|^2 e^{-s^2/2} s^{\frac{2(N - a - b - 1)}{1 + b - a} - 1} ds \nonumber\\
       &\geq 4\omega_{N-1} \left(\frac{1+b-a}{4}\right)^{\frac{N - a - b - 1}{1 + b - a}} 
       \inf_{c, d \in \mathbb{R}}\int_0^\infty |w(s)-c-ds^2|^2 e^{-s^2/2} s^{\frac{2(N - a - b - 1)}{1 + b - a} - 1} ds\nonumber\\
       &=2(1+b-a)\inf_{c, d \in \mathbb{R}}\int_{\mathbb{R}^N} |v(x)-c-d|x|^{1+b-a}|^2 \frac{e^{-\frac{2|x|^{1+b-a}}{1+b-a}}}{|x|^{a+b+1}} \, dx.
\end{align}
{\bf Step 2.}  For a general function $v$, we use the spherical harmonic expansion
\[
v(r,\sigma) = \sum_{k=0}^{\infty}\sum_{i=0}^{N_k} v_{k,i}(r)\,\phi_{k,i}(\sigma),
\]
where the spherical harmonics $\phi_{k,i}$ satisfy
\[
-\Delta_{\mathbb{S}^{N-1}}\phi_{k,i} = c_k\,\phi_{k,i}, 
\quad c_k = k(N+k-2), 
\quad \int_{\mathbb{S}^{N-1}}\phi_{k,i}^2\,d\sigma = 1,
\]
and the radial coefficients obey $v_{k,i}(r) = O(r^k)$ as $r \to 0^+$. 
When no ambiguity arises, we suppress the second index and write $v_k$ and $\phi_k$ in place of $v_{k,i}$ and $\phi_{k,i}$.

Then,
\[
\int_{\mathbb{R}^N} |\nabla v|^2 \frac{e^{-\frac{2|x|^{1+b-a}}{1+b-a}}}{|x|^{2b}} dx = \sum_{k=0}^\infty \int_{\mathbb{R}^N} \left( |\nabla v_k|^2 + c_k \frac{|v_k|^2}{|x|^2} \right) \frac{e^{-\frac{2|x|^{1+b-a}}{1+b-a}}}{|x|^{2b}} dx.
\]
Also, 
\[
v - c = (v_0 - c) + \sum_{k=1}^\infty v_k \phi_k,
\]
and
\[
\int_{\mathbb{R}^N} |v - c|^2 \frac{e^{-\frac{2|x|^{1+b-a}}{1+b-a}}}{|x|^{a + b + 1}} dx = \int_{\mathbb{R}^N} |v_0 - c|^2 \frac{e^{-\frac{2|x|^{1+b-a}}{1+b-a}}}{|x|^{a + b + 1}} dx + \sum_{k=1}^\infty \int_{\mathbb{R}^N} |v_k|^2 \frac{e^{-\frac{2|x|^{1+b-a}}{1+b-a}}}{|x|^{a + b + 1}} dx.
\]
Consequently,
\[
\inf_{c \in \mathbb{R}} \int_{\mathbb{R}^N} |v - c|^2 \frac{e^{-\frac{2|x|^{1+b-a}}{1+b-a}}}{|x|^{a + b + 1}} dx = \inf_{c \in \mathbb{R}} \int_{\mathbb{R}^N} |v_0 - c|^2 \frac{e^{-\frac{2|x|^{1+b-a}}{1+b-a}}}{|x|^{a + b + 1}} dx + \sum_{k=1}^\infty \int_{\mathbb{R}^N} |v_k|^2 \frac{e^{-\frac{2|x|^{1+b-a}}{1+b-a}}}{|x|^{a + b + 1}} dx.
\]
Define 
\[
D_k = \inf \frac{\int_{\mathbb{R}^N} \left( |\nabla u|^2 + c_k \frac{|u|^2}{|x|^2} \right) \frac{e^{-\frac{2|x|^{1+b-a}}{1+b-a}}}{|x|^{2b}} dx}{\int_{\mathbb{R}^N} |u|^2 \frac{e^{-\frac{2|x|^{1+b-a}}{1+b-a}}}{|x|^{a + b + 1}} dx},
\]
where the infimum is taken over all radial functions $u \not\equiv 0$ satisfying $u(r) = O(r)$ as $r \to 0^+$. Since $c_k \geq c_1$ for all $k \geq 1$, we have $D_k \geq D_1$ for all $k \geq 1$. From the previous radial case,
\[
\int_{\mathbb{R}^N} |\nabla v_0|^2 \frac{e^{-\frac{2|x|^{1+b-a}}{1+b-a}}}{|x|^{2b}} dx \geq 2(1 + b - a) \inf_{c \in \mathbb{R}} \int_{\mathbb{R}^N} |v_0 - c|^2 \frac{e^{-\frac{2|x|^{1+b-a}}{1+b-a}}}{|x|^{a + b + 1}} dx,
\]
and thus
\[
C_{PI} \geq \min \{ 2(1 + b - a), D_1 \}.
\]
{\bf Step 3.} Next, compute $D_1$ explicitly. Since $c_1 = N - 1$, we have for radial functions $u$ that
\[
\int_{\mathbb{R}^N} \left( |\nabla u|^2 + c_1 \frac{|u|^2}{|x|^2} \right) \frac{e^{-\frac{2|x|^{1+b-a}}{1+b-a}}}{|x|^{2b}} dx = \omega_{N-1} \int_0^\infty \left[ |u'(r)|^2 + (N - 1) \frac{|u(r)|^2}{r^2} \right] e^{-\frac{2 r^{1+b-a}}{1+b-a}} r^{N - 1 - 2b} dr.
\]
By the previous substitution for $w(s)$, it follows that
\begin{align*}
    &\int_{\mathbb{R}^N} \left( |\nabla u|^2 + c_1 \frac{|u|^2}{|x|^2} \right) \frac{e^{-\frac{2|x|^{1+b-a}}{1+b-a}}}{|x|^{2b}} dx  \\
    &= 2 \omega_{N-1} \left( \frac{1 + b - a}{4} \right)^{\frac{N - a - b - 1}{1 + b - a}} \int_0^\infty \left[ (w'(s))^2 + \frac{4 (N-1)}{(1 + b - a)^2} \frac{w^2(s)}{s^2} \right] e^{-s^2/2} s^{\frac{2 (N - a - b - 1)}{1 + b - a} - 1} ds.
\end{align*}
To estimate the integral on the right hand side, we apply Theorem \ref{lemma 0.1} with
$$C = \frac{4 (N - 1)}{(1 + b - a)^2}, \quad A = \frac{2 (N - a - b - 1)}{1 + b - a} \geq 2.$$
In this case, we get
\begin{align*}
    &\int_{\mathbb{R}^N} \left( |\nabla u|^2 + c_1 \frac{|u|^2}{|x|^2} \right) \frac{e^{-\frac{2|x|^{1+b-a}}{1+b-a}}}{|x|^{2b}} dx \\
    &\geq 2 \omega_{N-1} \left(\frac{1 + b - a}{4}\right)^{\frac{N - a - b -1}{1 + b - a}} \frac{ \sqrt{(N - 2 - 2b)^2 + 4 (N - 1)} - (N - 2 - 2b) }{1 + b - a} \\
    &\quad \times \int_0^\infty w^2(s) e^{-s^2/2} s^{\frac{2 (N - a - b - 1)}{1 + b - a} - 1} ds \\
    &= \left( \sqrt{(N - 2 - 2b)^2 + 4 (N - 1)} - (N - 2 - 2b) \right) \int_{\mathbb{R}^N} |u|^2 \frac{e^{-\frac{2|x|^{1+b -a}}{1 + b - a}}}{|x|^{a + b + 1}} dx.
\end{align*}

Thus,
\[
D_1 = \sqrt{(N - 2 - 2b)^2 + 4 (N - 1)} - (N - 2 - 2b).
\]
Moreover, by Theorem \ref{lemma 0.1},  we have for radial functions $u$ that
\begin{align*}
    &\int_{\mathbb{R}^N} \left( |\nabla u|^2 + c_1 \frac{|u|^2}{|x|^2} \right) \frac{e^{-\frac{2|x|^{1+b-a}}{1+b-a}}}{|x|^{2b}} dx - D_1 \int_{\mathbb{R}^N} |u|^2 \frac{e^{-\frac{2|x|^{1+b -a}}{1 + b - a}}}{|x|^{a + b + 1}} dx\\
    &=C'(N,a,b)\Bigg( \int_0^\infty \left[ (w'(s))^2 + \frac{4 (N-1)}{(1 + b - a)^2} \frac{w^2(s)}{s^2} \right] e^{-s^2/2} s^{\frac{2 (N - a - b - 1)}{1 + b - a} - 1} ds\\
    &- \frac{ \sqrt{(N - 2 - 2b)^2 + 4 (N - 1)} - (N - 2 - 2b) }{1 + b - a} 
    \int_0^\infty w^2(s) e^{-s^2/2} s^{\frac{2 (N - a - b - 1)}{1 + b - a} - 1} ds\Bigg)\\
    &\geq 2C'(N,a,b) \inf_{d \in \mathbb{R}} \int_0^\infty \left\vert w(s) - ds^{\frac{-(A-2)+\sqrt{(A-2)^2+4C}}{2}}\right\vert ^2 e^{-s^2/2} s^{A-1} ds\\
    &=2\inf_{d \in \mathbb{R}}\int_{\mathbb{R}^N} |u- d|x|^{\frac{\sqrt{(N-2-2b)^2+4(N-1)}-(N-2-2b)}{2}}|^2 \frac{e^{-\frac{2|x|^{1+b -a}}{1 + b - a}}}{|x|^{a + b + 1}} dx,
\end{align*}
where $A = \frac{2 (N - a - b - 1)}{1 + b - a}$, $C = \frac{4 (N - 1)}{(1 + b - a)^2}$ and $C'(N,a,b)=2 \omega_{N-1} \left( \frac{1 + b - a}{4} \right)^{\frac{N - a - b - 1}{1 + b - a}}$.
We also note that by the same approach, we also obtain that with \[
D_k = \sqrt{(N - 2 - 2b)^2 + 4c_k} - (N - 2 - 2b),
\]
we have 
\begin{align}\label{k1}
    &\int_{\mathbb{R}^N} \left( |\nabla u|^2 + c_k \frac{|u|^2}{|x|^2} \right) \frac{e^{-\frac{2|x|^{1+b-a}}{1+b-a}}}{|x|^{2b}} dx \geq D_k \int_{\mathbb{R}^N} |u|^2 \frac{e^{-\frac{2|x|^{1+b -a}}{1 + b - a}}}{|x|^{a + b + 1}} dx,
\end{align}
and
\begin{align}\label{k2}
    &\int_{\mathbb{R}^N} \left( |\nabla u|^2 + c_k \frac{|u|^2}{|x|^2} \right) \frac{e^{-\frac{2|x|^{1+b-a}}{1+b-a}}}{|x|^{2b}} dx - D_k \int_{\mathbb{R}^N} |u|^2 \frac{e^{-\frac{2|x|^{1+b -a}}{1 + b - a}}}{|x|^{a + b + 1}} dx \nonumber\\
    &\geq 2(1+b-a)\inf_{d \in \mathbb{R}}\int_{\mathbb{R}^N} |u- d|x|^{\frac{D_k}{2}}|^2 \frac{e^{-\frac{2|x|^{1+b -a}}{1 + b - a}}}{|x|^{a + b + 1}} dx.
\end{align}
{\bf Step 4.} Therefore, the sharp constant in \eqref{Poin1} is
\[
C_{PI}(a,b,N) = \min \left\{ 2(1 + b - a), \sqrt{(N - 2 - 2b)^2 + 4 (N - 1)} - (N - 2 - 2b) \right\}.
\]

Moreover, this constant is sharp: if $C_{PI}(a,b,N) = 2 (1 + b - a)$, choose 
$$
v(x) = a_0 + a_1 L_1^{\alpha/2 - 1} \left( |x|^{1+b-a} \right).
$$
Otherwise, if
$$
C_{PI}(a,b,N) = \sqrt{(N - 2 - 2b)^2 + 4 (N - 1)} - (N - 2 - 2b),
$$
choose
$$
v(x) = |x|^{\frac{\sqrt{(N-2-2b)^2+4(N-1)}-(N-2-2b)}{2}-1}\mathbf{d}\cdot x.
$$
{\bf Step 5.} If $C_1:=2 (1 + b - a) < D_1=\sqrt{(N - 2 - 2b)^2 + 4 (N - 1)} - (N - 2 - 2b)$, then we have from \eqref{rad1} and \eqref{k1} that
\begin{align*}
        &\int_{\mathbb{R}^N} |\nabla v|^2 \frac{e^{-\frac{2|x|^{1+b-a}}{1+b-a}}}{|x|^{2b}} \, dx 
        - C_1 \inf_{c \in \mathbb{R}} \int_{\mathbb{R}^N} |v - c|^2 \frac{e^{-\frac{2|x|^{1+b-a}}{1+b-a}}}{|x|^{1+b+a}}  \, dx \\
        &= \int_{\mathbb{R}^N} |\nabla v_0|^2 \frac{e^{-\frac{2|x|^{1+b-a}}{1+b-a}}}{|x|^{2b}} dx - C_1 \inf_{c \in \mathbb{R}} \int_{\mathbb{R}^N} |v_0 - c|^2 \frac{e^{-\frac{2|x|^{1+b-a}}{1+b-a}}}{|x|^{a + b + 1}} dx \\
        &+ \sum_{k=1}^\infty \int_{\mathbb{R}^N} \left( |\nabla v_k|^2 + c_k \frac{|v_k|^2}{|x|^2} \right) \frac{e^{-\frac{2|x|^{1+b-a}}{1+b-a}}}{|x|^{2b}} dx -C_1 \int_{\mathbb{R}^N} |v_k|^2 \frac{e^{-\frac{2|x|^{1+b-a}}{1+b-a}}}{|x|^{a + b + 1}} dx \\
        &\geq \frac{1}{2} \inf_{d \in \mathbb{R}} \int_{\mathbb{R}^N} \left\vert\nabla\left(v_0 - d|x|^{1+b-a}\right)\right\vert^2 \frac{e^{-\frac{2|x|^{1+b-a}}{1+b-a}}}{|x|^{2b}} dx \\
        &+ (1-\frac{C_1}{D_1}) \sum_{k=1}^\infty \int_{\mathbb{R}^N} \left( |\nabla v_k|^2 + c_k \frac{|v_k|^2}{|x|^2} \right) \frac{e^{-\frac{2|x|^{1+b-a}}{1+b-a}}}{|x|^{2b}} dx \\
        &\geq \min\left\{\frac{1}{2},1-\frac{C_1}{D_1}\right\} \inf_{d \in \mathbb{R}} \int_{\mathbb{R}^N} \left\vert\nabla\left(v -  d|x|^{1+b-a}\right)\right\vert^2 \frac{e^{-\frac{2|x|^{1+b-a}}{1+b-a}}}{|x|^{2b}} dx.
    \end{align*}
    Note that we also have 
    \begin{align*}
        &\int_{\mathbb{R}^N} |\nabla v|^2 \frac{e^{-\frac{2|x|^{1+b-a}}{1+b-a}}}{|x|^{2b}} \, dx 
        - C_1 \inf_{c \in \mathbb{R}} \int_{\mathbb{R}^N} |v - c|^2 \frac{e^{-\frac{2|x|^{1+b-a}}{1+b-a}}}{|x|^{1+b+a}}  \, dx \\
        &= \int_{\mathbb{R}^N} |\nabla v_0|^2 \frac{e^{-\frac{2|x|^{1+b-a}}{1+b-a}}}{|x|^{2b}} dx - C_1 \inf_{c \in \mathbb{R}} \int_{\mathbb{R}^N} |v_0 - c|^2 \frac{e^{-\frac{2|x|^{1+b-a}}{1+b-a}}}{|x|^{a + b + 1}} dx \\
        &+ \sum_{k=1}^\infty \int_{\mathbb{R}^N} \left( |\nabla v_k|^2 + c_k \frac{|v_k|^2}{|x|^2} \right) \frac{e^{-\frac{2|x|^{1+b-a}}{1+b-a}}}{|x|^{2b}} dx -C_1 \int_{\mathbb{R}^N} |v_k|^2 \frac{e^{-\frac{2|x|^{1+b-a}}{1+b-a}}}{|x|^{a + b + 1}} dx \\
        &\geq C_1 \inf_{c,d \in \mathbb{R}} \int_{\mathbb{R}^N} \left\vert v_0 - c - d|x|^{1+b-a}\right\vert^2 \frac{e^{-\frac{2|x|^{1+b-a}}{1+b-a}}}{|x|^{a+b+1}} dx \\
        &+ (D_1-C_1) \sum_{k=1}^\infty \int_{\mathbb{R}^N} |v_k|^2 \frac{e^{-\frac{2|x|^{1+b-a}}{1+b-a}}}{|x|^{a + b + 1}} dx \\
        &\geq \min\left\{C_1,D_1-C_1\right\} \inf_{c,d \in \mathbb{R}} \int_{\mathbb{R}^N} \left\vert v -c-  d|x|^{1+b-a} \right\vert^2 \frac{e^{-\frac{2|x|^{1+b-a}}{1+b-a}}}{|x|^{a+b+1}} dx.
    \end{align*}
   {\bf Step 6.} Now, assume that $C_1=2 (1 + b - a) > D_1=\sqrt{(N - 2 - 2b)^2 + 4 (N - 1)} - (N - 2 - 2b)$. In this case, we have
\begin{align*}
        &\int_{\mathbb{R}^N} |\nabla v|^2 \frac{e^{-\frac{2|x|^{1+b-a}}{1+b-a}}}{|x|^{2b}} \, dx 
        - D_1 \inf_{c \in \mathbb{R}} \int_{\mathbb{R}^N} |v - c|^2 \frac{e^{-\frac{2|x|^{1+b-a}}{1+b-a}}}{|x|^{1+b+a}}  \, dx \\
        &= \int_{\mathbb{R}^N} |\nabla v_0|^2 \frac{e^{-\frac{2|x|^{1+b-a}}{1+b-a}}}{|x|^{2b}} dx - D_1 \inf_{c \in \mathbb{R}} \int_{\mathbb{R}^N} |v_0 - c|^2 \frac{e^{-\frac{2|x|^{1+b-a}}{1+b-a}}}{|x|^{a + b + 1}} dx \\
        &+\sum_{i=1}^{N} \int_{\mathbb{R}^N} \left( |\nabla v_{1,i}|^2 + c_1 \frac{|v_{1,i}|^2}{|x|^2} \right) \frac{e^{-\frac{2|x|^{1+b-a}}{1+b-a}}}{|x|^{2b}} dx -D_1 \int_{\mathbb{R}^N} |v_{1,i}|^2 \frac{e^{-\frac{2|x|^{1+b-a}}{1+b-a}}}{|x|^{a + b + 1}} dx \\
        &+ \sum_{k=2}^\infty \int_{\mathbb{R}^N} \left( |\nabla v_k|^2 + c_k \frac{|v_k|^2}{|x|^2} \right) \frac{e^{-\frac{2|x|^{1+b-a}}{1+b-a}}}{|x|^{2b}} dx -D_1 \int_{\mathbb{R}^N} |v_k|^2 \frac{e^{-\frac{2|x|^{1+b-a}}{1+b-a}}}{|x|^{a + b + 1}} dx \\
        &\geq \left(C_1- D_1\right) \inf_{c \in \mathbb{R}} \int_{\mathbb{R}^N} |v_0 - c|^2 \frac{e^{-\frac{2|x|^{1+b-a}}{1+b-a}}}{|x|^{a + b + 1}} dx \\ \\
        &+ 2(1+b-a)\sum_{i=1}^{N}\inf_{d_i \in \mathbb{R}}\int_{\mathbb{R}^N} |v_{1,i}\phi_{1,i}- d_i|x|^{\frac{D_1}{2}}\phi_{1,i}|^2 \frac{e^{-\frac{2|x|^{1+b -a}}{1 + b - a}}}{|x|^{a + b + 1}} dx\\
        &+ \left(D_2 -D_1\right)\sum_{k=2}^\infty \int_{\mathbb{R}^N} |v_k|^2 \frac{e^{-\frac{2|x|^{1+b-a}}{1+b-a}}}{|x|^{a + b + 1}} dx\\ 
        &\geq \min\left\{C_1- D_1,D_2-D_1 \right\} \inf_{c \in \mathbb{R},\mathbf{d} \in \mathbb{R}^N}\int_{\mathbb{R}^N} \left\vert v-c - |x|^{\frac{D_1}{2}-1}\mathbf{d} \cdot x\right\vert^2 \frac{e^{-\frac{2|x|^{1+b-a}}{1+b-a}}}{|x|^{a+b+1}} \, dx.
    \end{align*}
    Now, by applying Theorem \ref{lemma 0.2}, we also get that
\begin{align*}
        &\int_{\mathbb{R}^N} |\nabla v|^2 \frac{e^{-\frac{2|x|^{1+b-a}}{1+b-a}}}{|x|^{2b}} \, dx 
        - D_1 \inf_{c \in \mathbb{R}} \int_{\mathbb{R}^N} |v - c|^2 \frac{e^{-\frac{2|x|^{1+b-a}}{1+b-a}}}{|x|^{1+b+a}}  \, dx \\
        &\geq \min\left\{1- \frac{D_1}{C_1},1-\frac{D_1}{D_2} \right\} \inf_{c \in \mathbb{R},\mathbf{d} \in \mathbb{R}^N}\int_{\mathbb{R}^N} \left\vert \nabla(v-c - |x|^{\frac{D_1}{2}-1}\mathbf{d} \cdot x)\right\vert^2 \frac{e^{-\frac{2|x|^{1+b-a}}{1+b-a}}}{|x|^{2b}} \, dx.
    \end{align*}
Indeed, in this case, we have

\begin{equation*}
        \int_{\mathbb{R}^N} |\nabla v|^2 \frac{e^{-\frac{2|x|^{1+b-a}}{1+b-a}}}{|x|^{2b}} \, dx 
        \geq D_1 \inf_{c \in \mathbb{R}} \int_{\mathbb{R}^N} |v - c|^2 \frac{e^{-\frac{2|x|^{1+b-a}}{1+b-a}}}{|x|^{1+b+a}}  \, dx,
    \end{equation*}
where the equality holds iff $v(x)=c+|x|^{\frac{D_1}{2}-1}\mathbf{d}\cdot x.$ 
Then,
$$\int_{\mathbb{R}^N}|\nabla v|^2d\mu=D_1 \int_{\mathbb{R}^N}|v|^2d\nu$$ 
iff $v \in \text{span}\left\{|x|^{\frac{D_2}{1}-1}\mathbf{d}\cdot x\right\}=\text{span}\left\{\phi_{1,j}(\sigma)r^{\frac{D_1}{2}}:j=\overline{1,N}\right\}$. We claim that Theorem \ref{lemma 0.2} can be applied with 
$$L_1:=\text{span}\left\{\phi_{1,j}(\sigma)r^{\frac{D_1}{2}}: j=\overline{1,N}\right\}.$$
Indeed, by using the spherical harmonics decomposition, we can represent
\begin{align*}
    v&=v_0(r)+\sum_{k=1}^\infty \sum_{j=1}^{N_k} v_{k,j}(r)\phi_{k,j}(\sigma)\\
    &=v_{0}(r)+\sum_{j=1}^N v_{1,j}(r)\phi_{1,j}(\sigma)+ \sum_{k=2}^\infty \sum_{j=1}^{N_k} v_{k,j}(r)\phi_{k,j}(\sigma)\\
    &=\beta+\sum_{j=1}^N \alpha_jr^{\frac{D_1}{2}}\phi_{1,j}(\sigma)\\
    &+(v_{0}(r)-\beta)+\sum_{j=1}^N \left(v_{1,j}(r)-\alpha_j r^{\frac{D_1}{2}}\right)\phi_{1,j}(\sigma)+ \sum_{k=2}^\infty \sum_{j=1}^{N_k} v_{k,j}(r)\phi_{k,j}(\sigma).
\end{align*}
It suffices to determine $(\alpha_j)_j$, $\beta$ such that 
$(v_{0}(r)-\beta)$ and $ \left(v_{1,j}(r)-\alpha_j r^{\frac{D_1}{2}}\right)\phi_{1,j}(\sigma)$
are orthogonal to $L_0$ and $L_1$ in $L^2(\mathbb{R}^N, d\nu)$ and $W^{1,2}(\mathbb{R}^N, d\mu)$ for each $j$. Here, we denote
$$d\nu=\frac{e^{-\frac{2|x|^{1+b-a}}{1+b-a}}}{|x|^{1+b+a}}  \, dx\;\;\text{and}\;\; d\mu=\frac{e^{-\frac{2|x|^{1+b-a}}{1+b-a}}}{|x|^{2b}}dx.$$
Notice that, with $\beta=\frac{\int_{\mathbb{R}^N}v_{0}(r)d\nu}{\int _{\mathbb{R}^N}d\nu}$, the orthogonality of $(v_{0}-\beta)$ is automatically satisfied. Next, by choosing 
$$\alpha_j=\dfrac{D_1\int_{\mathbb{R}^N} r^{\frac{D_1}{2}}v_{1,j}(r)(\phi_{1,j}(\sigma))^2d\nu}{\int_{\mathbb{R}^N}\left|\nabla\left(r^{\frac{D_1}{2}}\phi_{1,j}(\sigma)\right)\right|^2d\mu},$$
we get the orthogonality of $ \left(v_{1,j}(r)-\alpha_j r^{\frac{D_1}{2}}\right)\phi_{1,j}(\sigma)$ in  $L^2(\mathbb{R}^N, d\nu)$. With this $\alpha_j$, it is enough to check if
$$\int_{\mathbb{R}^N}\nabla\left(\alpha_j r^{\frac{D_1}{2}}\phi_{1,j}(\sigma)\right)\nabla\left(\left(v_{1,j}(r)-\alpha_j r^{\frac{D_1}{2}}\right)\phi_{1,j}(\sigma)\right)d\mu=0.$$
It is true since
\begin{align*}
    &\int_{\mathbb{R}^N}\nabla\left(\alpha_j r^{\frac{D_1}{2}}\phi_{1,j}(\sigma)\right)\nabla\left(\left(v_{1,j}(r)-\alpha_j r^{\frac{D_1}{2}}\right)\phi_{1,j}(\sigma)\right)d\mu\\
    &=\alpha_j\left(\int_{\mathbb{R}^N}\nabla\left(r^{\frac{D_1}{2}}\phi_{1,j}(\sigma)\right) \nabla\left( v_{1,j}(r)\phi_{1,j}(\sigma)\right)d\mu-D_1\int_{\mathbb{R}^N} r^{\frac{D_1}{2}}v_{1,j}(r) (\phi_{1,j}(\sigma))^2 d\nu \right),
\end{align*}
and
\begin{align*}
    &\int_{\mathbb{R}^N}\nabla\left(r^{\frac{D_1}{2}}\phi_{1,j}(\sigma)\right) \nabla\left( v_{1,j}(r)\phi_{1,j}(\sigma)\right)d\mu\\
    &=\left(N-1-\frac{D_1^2}{4}-\frac{D_1(N-2b-2)}{2}\right)\omega_{N-1}\int_0^\infty v_{1,j}(r)r^{\frac{D_1}{2}+N-3-2b}e^{-\frac{2r^{1+b-a}}{1+b-a}}dr\\
    &+D_1\omega_{N-1}\int_0^\infty v_{1,j}(r)r^{\frac{D_1}{2}+N-a-b-2}e^{-\frac{2r^{1+b-a}}{1+b-a}}dr\\
    &=D_1\omega_{N-1} \int_0^\infty v_{1,j}(r)r^{\frac{D_1}{2}+N-a-b-2}e^{-\frac{2r^{1+b-a}}{1+b-a}}dr\\
    &=D_1\int_{\mathbb{R}^N} r^{\frac{D_1}{2}}v_{1,j}(r) (\phi_{1,j}(\sigma))^2 d\nu,
\end{align*}
which follows from $N-1-\frac{D_1^2}{4}-\frac{D_1(N-2b-2)}{2}=0$ and $\int_{\mathbb{S}^{N-1}}(\phi_{1,j}(\sigma))^2d\sigma=\omega_{N-1}.$ Therefore, in this case, we choose
$$L_1=\text{span} \left\{|x|^{\frac{D_1}{2}-1}\mathbf{d}\cdot x\right\}.$$
{\bf Step 7.} If $C_1=2 (1 + b - a) = D_1=\sqrt{(N - 2 - 2b)^2 + 4 (N - 1)} - (N - 2 - 2b)$, then
    \begin{align*}
        &\int_{\mathbb{R}^N} |\nabla v|^2 \frac{e^{-\frac{2|x|^{1+b-a}}{1+b-a}}}{|x|^{2b}} \, dx 
        - D_1 \inf_{c \in \mathbb{R}} \int_{\mathbb{R}^N} |v - c|^2 \frac{e^{-\frac{2|x|^{1+b-a}}{1+b-a}}}{|x|^{1+b+a}}  \, dx \\
        &= \int_{\mathbb{R}^N} |\nabla v_0|^2 \frac{e^{-\frac{2|x|^{1+b-a}}{1+b-a}}}{|x|^{2b}} dx - C_1 \inf_{c \in \mathbb{R}} \int_{\mathbb{R}^N} |v_0 - c|^2 \frac{e^{-\frac{2|x|^{1+b-a}}{1+b-a}}}{|x|^{a + b + 1}} dx \\
        &+\sum_{i=1}^{N} \int_{\mathbb{R}^N} \left( |\nabla v_{1,i}|^2 + c_1 \frac{|v_{1,i}|^2}{|x|^2} \right) \frac{e^{-\frac{2|x|^{1+b-a}}{1+b-a}}}{|x|^{2b}} dx -D_1 \int_{\mathbb{R}^N} |v_{1,i}|^2 \frac{e^{-\frac{2|x|^{1+b-a}}{1+b-a}}}{|x|^{a + b + 1}} dx \\
        &+ \sum_{k=2}^\infty \int_{\mathbb{R}^N} \left( |\nabla v_k|^2 + c_k \frac{|v_k|^2}{|x|^2} \right) \frac{e^{-\frac{2|x|^{1+b-a}}{1+b-a}}}{|x|^{2b}} dx -D_1 \int_{\mathbb{R}^N} |v_k|^2 \frac{e^{-\frac{2|x|^{1+b-a}}{1+b-a}}}{|x|^{a + b + 1}} dx \\
        &\geq C_1 \inf_{c,d \in \mathbb{R}} \int_{\mathbb{R}^N} \left\vert v_0 -c- d|x|^{1+b-a}\right\vert^2 \frac{e^{-\frac{2|x|^{1+b-a}}{1+b-a}}}{|x|^{a+b+1}} dx \\
        &+ 2(1+b-a)\sum_{i=1}^{N}\inf_{d_i \in \mathbb{R}}\int_{\mathbb{R}^N} |v_{1,i}\phi_{1,i}- d_i|x|^{\frac{D_1}{2}}\phi_{1,i}|^2 \frac{e^{-\frac{2|x|^{1+b -a}}{1 + b - a}}}{|x|^{a + b + 1}} dx\\
        &+ \left(D_2 -D_1\right)\sum_{k=2}^\infty \int_{\mathbb{R}^N} |v_k|^2 \frac{e^{-\frac{2|x|^{1+b-a}}{1+b-a}}}{|x|^{a + b + 1}} dx\\ 
        &\geq \min\left\{D_1,D_2-D_1 \right\}\\
        &\times\inf_{c, d \in \mathbb{R},\mathbf{d} \in \mathbb{R}^N}\int_{\mathbb{R}^N} \left\vert v-c -d|x|^{1+b-a}- |x|^{\frac{D_1}{2}-1}\mathbf{d} \cdot x\right\vert^2 \frac{e^{-\frac{2|x|^{1+b-a}}{1+b-a}}}{|x|^{a+b+1}} \, dx.
    \end{align*}
    Here, we used \eqref{rad2}, \eqref{k1}, and \eqref{k2}.
    Once again, by using Theorem \ref{lemma 0.2}, we obtain 
    \begin{align*}
        &\int_{\mathbb{R}^N} |\nabla v|^2 \frac{e^{-\frac{2|x|^{1+b-a}}{1+b-a}}}{|x|^{2b}} \, dx 
        - D_1 \inf_{c \in \mathbb{R}} \int_{\mathbb{R}^N} |v - c|^2 \frac{e^{-\frac{2|x|^{1+b-a}}{1+b-a}}}{|x|^{1+b+a}}  \, dx \\
        &\geq \min\left\{\frac{1}{2},1-\frac{D_1}{D_2} \right\}\\
        &\times\inf_{c, d \in \mathbb{R},\mathbf{d} \in \mathbb{R}^N}\int_{\mathbb{R}^N} \left\vert \nabla(v-c -d|x|^{1+b-a}- |x|^{\frac{D_1}{2}-1}\mathbf{d} \cdot x)\right\vert^2 \frac{e^{-\frac{2|x|^{1+b-a}}{1+b-a}}}{|x|^{2b}} \, dx.
    \end{align*}
    To use Theorem \ref{lemma 0.2}, we choose $$L_1=\text{span} \left\{|x|^{\frac{D_1}{2}-1}\mathbf{d}\cdot x, L_1^{(\alpha/2-1)}\left(\dfrac{2}{1+b-a}|x|^{1+b-a}\right) \right\}.$$
Indeed, in this case, the function $v$ is represented as follows
\begin{align*}
    v=v_0(r)+\sum_{k=1}^\infty \sum_{j=1}^{N_k} v_{k,j}(r)\phi_{k,j}(\sigma)&=v_{0}(r)+\sum_{j=1}^N v_{1,j}(r)\phi_{1,j}(\sigma)+ \sum_{k=2}^\infty \sum_{j=1}^{N_k} v_{k,j}(r)\phi_{k,j}(\sigma)\\
    &=\beta\\
    &+\sum_{j=1}^N \alpha_jr^{\frac{D_1}{2}}\phi_{1,j}(\sigma)+\gamma L_1^{(\alpha/2-1)}\left(\dfrac{2}{1+b-a}|x|^{1+b-a}\right)\\
    &+\left(v_{0}(r)-\beta-\gamma L_1^{(\alpha/2-1)}\left(\dfrac{2}{1+b-a}|x|^{1+b-a}\right)\right)\\
    &+\sum_{j=1}^N \left(v_{1,j}(r)-\alpha_j r^{\frac{D_1}{2}}\right)\phi_{1,j}(\sigma)+ \sum_{k=2}^\infty \sum_{j=1}^{N_k} v_{k,j}(r)\phi_{k,j}(\sigma),
\end{align*}
where $\beta$ and $\alpha_j$ are determined as in the case $C_1>D_1$. Besides, by defining  $$\gamma=\dfrac{\int_{\mathbb{R}^N} L_1^{(\alpha/2-1)}\left(\dfrac{2}{1+b-a}|x|^{1+b-a}\right) v_{0}(r)d\nu}{\int_{\mathbb{R}^N} \left(L_1^{(\alpha/2-1)}\left(\dfrac{2}{1+b-a}|x|^{1+b-a}\right)\right)^2d\nu},$$
the orthogonality in $L^2(\mathbb{R}^N, d\nu)$ is simply verified. With such constants $\beta, \alpha_j$, and $\gamma$, to check the orthogonality in $W^{1,2}(\mathbb{R}^N, d\mu)$, it is enough to prove that 
$$\int_{\mathbb{R}^N}\nabla \left( L_1^{(\alpha/2-1)}\left(\dfrac{2}{1+b-a}|x|^{1+b-a}\right)\right) \nabla (v_{0}(r))d\mu=C_1 \int_{\mathbb{R}^N} L_1^{(\alpha/2-1)}\left(\dfrac{2}{1+b-a}|x|^{1+b-a}\right) v_0(r) d\nu.$$
However, it is obvious since we have proved that $C_1$ is the sharp constant of the inequality
$$\int_{\mathbb{R}^N}|\nabla u|^2d\mu \geq C \int_{\mathbb{R}^N} |u|^2d\nu,$$
with $ L_1^{(\alpha/2-1)}\left(\dfrac{2}{1+b-a}|x|^{1+b-a}\right)$ is an optimizer. 
\end{proof}

Using the Kelvin transform and suitable changes of variables, we also derive sharp weighted Poincar\'e inequalities with Gaussian-type measures on other domains. For brevity, we state here only the sharp inequalities and their constants.

\begin{theorem}\label{lemma 2}
    Assume $1 + b - a < 0$ and $b \geq \frac{N-2}{2}$. Then
    \begin{equation}
        \int_{\mathbb{R}^N} |\nabla v|^2 \frac{e^{\frac{2|x|^{1+b-a}}{1+b-a}}}{|x|^{2b}} \, dx 
        \geq C_{2,PI}(a,b,N) \inf_{c \in \mathbb{R}} \int_{\mathbb{R}^N} |v - c|^2 \frac{e^{\frac{2|x|^{1+b-a}}{1+b-a}}}{|x|^{1+b+a}} \, dx,
    \end{equation}
    where $C_{2, PI}(a,b,N) = C_{PI}(N - a, N - 2 - b, N).$
\end{theorem}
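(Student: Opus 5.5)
The plan is to reduce the statement to Theorem~\ref{lemma 1} by a Kelvin-type inversion, which swaps the regimes $1+b-a<0$, $b\ge \frac{N-2}{2}$ and $1+b'-a'>0$, $b'\le\frac{N-2}{2}$ with $(a',b')=(N-a,N-2-b)$. Given $v$, define $u(y)=v\!\left(\frac{y}{|y|^2}\right)$, i.e. $v(x)=u(y)$ with $y=x/|x|^2$. We use the standard facts $|x|=|y|^{-1}$, $dx=|y|^{-2N}dy$ and $|\nabla v(x)|=|y|^{2}|\nabla u(y)|$, the last because the inversion is conformal with factor $|y|^2$.

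\textbf{Step 1: transform the gradient term.} Substituting,
\[
\int_{\mathbb{R}^N}|\nabla v|^2\frac{e^{\frac{2|x|^{1+b-a}}{1+b-a}}}{|x|^{2b}}dx=\int_{\mathbb{R}^N}|\nabla u(y)|^2 e^{\frac{2|y|^{-(1+b-a)}}{1+b-a}}|y|^{4+2b-2N}dy .
\]
With $a'=N-a$ and $b'=N-2-b$ we have $1+b'-a'=a-b-1=-(1+b-a)>0$. Hence the exponential equals $e^{-\frac{2|y|^{1+b'-a'}}{1+b'-a'}}$, and $|y|^{4+2b-2N}=|y|^{-2b'}$.

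\textbf{Step 2: transform the $L^2$ term.} Similarly,
\[
\int_{\mathbb{R}^N}|v-c|^2\frac{e^{\frac{2|x|^{1+b-a}}{1+b-a}}}{|x|^{1+b+a}}dx=\int_{\mathbb{R}^N}|u-c|^2e^{-\frac{2|y|^{1+b'-a'}}{1+b'-a'}}|y|^{1+a+b-2N}dy ,
\]
and $1+a'+b'=2N-1-a-b$, so the weight is exactly $|y|^{-(1+a'+b')}$. Constants are fixed by the inversion, so the infimum over $c$ is preserved.

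\textbf{Step 3: check the hypotheses and conclude.} The hypotheses of Theorem~\ref{lemma 1} for $(a',b')$ hold: $1+b'-a'>0$, and $b'\le\frac{N-2}{2}$ is equivalent to $b\ge\frac{N-2}{2}$. Applying \eqref{Poin1} to $u$ then gives the claim with constant $C_{PI}(N-a,N-2-b,N)$. Sharpness and attainment also transfer. The optimizers for $(a',b')$ are $a_0+a_1|y|^{1+b'-a'}$ or $c+|y|^{\frac{D_1}{2}-1}\mathbf d\cdot y$, where $D_1$ is computed with $b'$. Pulled back through the inversion, these become $a_0+a_1|x|^{1+b-a}$ and $c+|x|^{-\frac{D_1}{2}-1}\mathbf d\cdot x$. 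Since the transform is a bijective isometry between the two pairs of weighted quadratic forms, equality cases correspond exactly. The stability statements of Theorem~\ref{lemma 1} transfer verbatim in the same way.

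\textbf{Main obstacle: function classes.} If $v\in C_0^\infty(\mathbb{R}^N)$, then $u$ is constant near the origin but does not vanish there, and it need not be smooth at $0$ in the original sense. So $u$ is not literally in $C_0^\infty$. I would first argue by density that $v\in C_0^\infty(\mathbb{R}^N\setminus\{0\})$ suffices. For $1+b-a<0$, the weight $e^{\frac{2|x|^{1+b-a}}{1+b-a}}$ vanishes to infinite order at $0$, which makes cutoffs near the origin cheap, and the polynomial weights at infinity are handled by standard truncation. Such $v$ map to $u\in C_0^\infty(\mathbb{R}^N\setminus\{0\})$. Theorem~\ref{lemma 1} applies to these $u$, because its proof goes through the one-dimensional estimates of Theorems~\ref{lemma 0} and~\ref{lemma 0.1}, which are valid in the weighted $W^{1,2}$ spaces. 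One remaining check is the condition $v_k=O(r^k)$ used in the spherical-harmonic step. For $u$ supported away from the origin this condition is trivially satisfied.
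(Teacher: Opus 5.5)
Your proposal is correct and is essentially the paper's proof: the same Kelvin inversion $y=x/|x|^2$ with $(a',b')=(N-a,N-2-b)$ reduces the statement to Theorem~\ref{lemma 1}, except that you pull the target inequality back instead of pushing \eqref{Poin1} forward and reparametrizing as the paper does. One small slip in your function-class remark: for $v\in C_0^\infty(\mathbb{R}^N)$, the function $u$ vanishes identically near the origin; the real defect is at infinity, where $u(y)\to v(0)$, so $u$ is not compactly supported. Your reduction to $v\in C_0^\infty(\mathbb{R}^N\setminus\{0\})$, justified by the infinite-order vanishing of the weight at $0$, still handles this correctly.
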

\begin{proof}
    By Theorem \ref{lemma 1}, for $1 + n - m > 0$ and $n \leq \frac{N-2}{2}$,
    \[
    \int_{\mathbb{R}^N} |\nabla v(x)|^2 \frac{e^{-\frac{2 |x|^{1+n-m}}{1+n-m}}}{|x|^{2n}} \, dx \geq C_{PI}(m,n,N) \inf_{c \in \mathbb{R}} \int_{\mathbb{R}^N} |v(x) - c|^2 \frac{e^{-\frac{2 |x|^{1+n-m}}{1+n-m}}}{|x|^{1+n+m}} \, dx.
    \]
    Using the Kelvin transform $x \mapsto y = \frac{x}{|x|^2}$ with Jacobian $dx = \frac{dy}{|y|^{2N}}$ and the chain rule, one obtains
    \[
    \int_{\mathbb{R}^N} |\nabla_x v(x)|^2 \frac{e^{-\frac{2|x|^{1+n-m}}{1+n-m}}}{|x|^{2n}} \, dx =
    \int_{\mathbb{R}^N} |\nabla v(y/|y|^2)|^2 \frac{e^{-\frac{2 |y|^{-(1+n-m)}}{1+n-m}}}{|y|^{2N - 2n}} \, dy.
    \]
    Setting $w(y) = v(y/|y|^2)$ yields
    \[
    |\nabla_y w(y)|^2 = \frac{1}{|y|^4} |\nabla v(y/|y|^2)|^2,
    \]
    and thus
    \[
    \int_{\mathbb{R}^N} |\nabla_y w(y)|^2 \frac{e^{-\frac{2|y|^{-(1+n-m)}}{1+n-m}}}{|y|^{2N - 2n - 4}} \, dy \geq 
    C_{PI}(m,n,N) \inf_{c \in \mathbb{R}} \int_{\mathbb{R}^N} |w(y) - c|^2 \frac{e^{-\frac{2|y|^{-(1+n-m)}}{1+n-m}}}{|y|^{2N - (1 + n + m)}} \, dy.
    \]

    Setting $2b := 2N - 2n - 4$ and $a := N - m$, we observe that $b = N - n - 2 \geq \frac{N-2}{2}$ and $1 + b - a = m - n - 1 < 0$.
    Hence, the lemma follows:
    \[
    \int_{\mathbb{R}^N} |\nabla_y w(y)|^2 \frac{e^{\frac{2|y|^{1+b-a}}{1+b-a}}}{|y|^{2b}} \, dy \geq C_{PI}(N - a, N - 2 - b, N) \inf_{c \in \mathbb{R}} \int_{\mathbb{R}^N} |w(y) - c|^2 \frac{e^{\frac{2|y|^{1+b-a}}{1+b-a}}}{|y|^{a+b+1}} \, dy.
    \]
\end{proof}

\begin{theorem}\label{lemma 1.3}
    Assume $1 + b - a < 0$ and $b \leq \frac{N-2}{2}$. Then
    \begin{equation}
        \int_{\mathbb{R}^N} |\nabla v|^2 \frac{e^{\frac{2|x|^{1+b-a}}{1+b-a}}}{|x|^{2N - 2b - 4}} \, dx \geq C_{3, PI}(a,b,N) \inf_{c \in \mathbb{R}} \int_{\mathbb{R}^N} |v - c|^2 \frac{e^{\frac{2|x|^{1+b-a}}{1+b-a}}}{|x|^{2N + a - 3b - 3}} \, dx,
    \end{equation}
    where $C_{3,PI}(a,b,N) = C_{PI}(-a + 2b + 2, b, N).$
\end{theorem}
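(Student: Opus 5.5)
The inequality should follow from Theorem \ref{lemma 1}, applied with parameters $(m,n)=(-a+2b+2,\,b)$, by a change of unknown of the form $v = |x|^{\gamma} u$ (a ground-state substitution) rather than a Kelvin transform. Write $\beta := 1+b-a<0$. The new parameters satisfy $1+n-m = 1+b-(-a+2b+2) = a-b-1 = -\beta>0$ and $n=b\le \frac{N-2}{2}$. Hence Theorem \ref{lemma 1} gives
\[
\int_{\mathbb{R}^N} |\nabla u|^2 \frac{e^{\frac{2|x|^{\beta}}{\beta}}}{|x|^{2b}}\,dx \ge C_{PI}(-a+2b+2,b,N)\inf_{c}\int_{\mathbb{R}^N}|u-c|^2\frac{e^{\frac{2|x|^{\beta}}{\beta}}}{|x|^{3b-a+3}}\,dx .
\]
The exponential factors already agree, since $-\frac{2|x|^{-\beta}}{-\beta}$ is not the same as $\frac{2|x|^{\beta}}{\beta}$. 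This is where the difficulty lies: the weight in Theorem \ref{lemma 1} is $e^{-2|x|^{1+n-m}/(1+n-m)}=e^{2|x|^{-\beta}/\beta}$, which involves $|x|^{-\beta}$, not $|x|^{\beta}$. So an inversion $x\mapsto x/|x|^2$ is needed to turn $|x|^{-\beta}$ into $|x|^{\beta}$.

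The plan is therefore to first apply the Kelvin transform $y=x/|x|^2$, exactly as in the proof of Theorem \ref{lemma 2}. For parameters $(m,n)$ with $1+n-m>0$ and $n\le\frac{N-2}{2}$, this gives
\[
\int |\nabla w|^2 \frac{e^{-\frac{2|y|^{-(1+n-m)}}{1+n-m}}}{|y|^{2N-2n-4}}dy\ge C_{PI}(m,n,N)\inf_c\int|w-c|^2\frac{e^{-\frac{2|y|^{-(1+n-m)}}{1+n-m}}}{|y|^{2N-(1+n+m)}}dy .
\]
Now choose $n=b$ and $m=-a+2b+2$. Then $-(1+n-m)=1+b-a=\beta$, so the exponential becomes $e^{2|y|^{\beta}/\beta}$. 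The gradient weight becomes $|y|^{-(2N-2b-4)}$, and the $L^2$ weight becomes $|y|^{-(2N-(1+b-a+2b+2))}=|y|^{-(2N+a-3b-3)}$. These are precisely the weights in the statement, so no ground-state substitution is needed after all.

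The hypotheses check out: $1+n-m=a-b-1>0$ because $1+b-a<0$, and $n=b\le\frac{N-2}{2}$ holds by assumption. The constants $c$ are invariant under $w=v\circ(\text{inversion})$, so the infimum over $c$ carries over directly.

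The only real work is verifying the Kelvin computation: the Jacobian $dx=|y|^{-2N}dy$, the identity $|\nabla_y w|^2=|y|^{-4}|\nabla v|^2\circ$inversion, and $|x|=|y|^{-1}$ in the exponent. One also needs a density argument, since $w$ need not be compactly supported near the origin. It is handled as in Theorem \ref{lemma 2}, using that the weights are integrable and the class is closed in the weighted $W^{1,2}$ norm.

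Sharpness and the optimizers transfer through the same bijection: they are the images of $a_0+a_1|x|^{1+n-m}$ and of $c+|x|^{D_1/2-1}\mathbf d\cdot x$ under inversion. So the constant $C_{PI}(-a+2b+2,b,N)$ is sharp and attained.
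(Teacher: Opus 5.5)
Your final argument, the inversion $y=x/|x|^2$ applied to Theorem~\ref{lemma 1} with $(m,n)=(-a+2b+2,\,b)$, is correct and is essentially the paper's proof. The paper obtains the statement by relabeling the parameters of Theorem~\ref{lemma 2}, which is itself Theorem~\ref{lemma 1} plus the same inversion, so the two routes compose to the identical argument. You should discard the abandoned ground-state-substitution opening: its first display wrongly gives the weight of Theorem~\ref{lemma 1} as $e^{2|x|^{\beta}/\beta}$ rather than $e^{2|x|^{-\beta}/\beta}$, and the sentence after it contradicts itself.
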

\begin{proof}
    From Theorem \ref{lemma 2}, for $1 + n - m < 0$ and $n \geq \frac{N-2}{2}$,
    \[
    \int_{\mathbb{R}^N} |\nabla v|^2 \frac{e^{\frac{2 |x|^{1+n-m}}{1+n-m}}}{|x|^{2n}} \, dx \geq C_{2, PI}(m,n,N) \inf_{c \in \mathbb{R}} \int_{\mathbb{R}^N} |v - c|^2 \frac{e^{\frac{2 |x|^{1+n-m}}{1+n-m}}}{|x|^{1+n+m}} \, dx.
    \]
    Setting $2 N - 2 b - 4 = 2 n$ and $1 + b - a = 1 + n - m$, we have $b = N - 2 - n \leq \frac{N-2}{2}$ and $1 + b - a < 0$. Therefore,
    \[
    \int_{\mathbb{R}^N} |\nabla v|^2 \frac{e^{\frac{2 |x|^{1+b-a}}{1+b-a}}}{|x|^{2N - 2b - 4}} \, dx \geq C_{2, PI}(N + a - 2 b - 2, N - b - 2, N) \inf_{c \in \mathbb{R}} \int_{\mathbb{R}^N} |v - c|^2 \frac{e^{\frac{2 |x|^{1+b-a}}{1+b-a}}}{|x|^{2N + a - 3 b - 3}} \, dx,
    \]
    which implies $C_{3,PI}(a,b,N) = C_{PI}(-a + 2 b + 2, b, N)$.
\end{proof}

\begin{theorem}\label{lemma 1.4}
    Assume $1 + b - a > 0$ and $b \geq \frac{N-2}{2}$. Then
    \begin{equation}
        \int_{\mathbb{R}^N} |\nabla v|^2 \frac{e^{-\frac{2|x|^{1+b-a}}{1+b-a}}}{|x|^{2N - 2b - 4}} \, dx \geq C_{4, PI}(a,b,N) \inf_{c \in \mathbb{R}} \int_{\mathbb{R}^N} |v - c|^2 \frac{e^{-\frac{2|x|^{1+b-a}}{1+b-a}}}{|x|^{2N + a - 3b - 3}} \, dx,
    \end{equation}
    where $C_{4,PI}(a,b,N) = C_{PI}(N + a - 2 b - 2, N - b - 2, N).$
\end{theorem}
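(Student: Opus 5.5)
The plan is to derive Theorem \ref{lemma 1.4} from Theorem \ref{lemma 1} through the Kelvin transform, in the same way Theorem \ref{lemma 2} was obtained. The parameter relabeling mirrors the one used to pass from Theorem \ref{lemma 2} to Theorem \ref{lemma 1.3}.

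First I apply Theorem \ref{lemma 1} with parameters $(m,n)$ satisfying $1+n-m>0$ and $n\le \frac{N-2}{2}$. Then I perform the inversion $y=x/|x|^2$ and set $w(y)=v(y/|y|^2)$. As computed in the proof of Theorem \ref{lemma 2}, the gradient side turns into
\[
\int_{\mathbb{R}^N}|\nabla w|^2\frac{e^{-\frac{2|y|^{-(1+n-m)}}{1+n-m}}}{|y|^{2N-2n-4}}\,dy,
\]
and the $L^2$ side turns into the integral against $e^{-\frac{2|y|^{-(1+n-m)}}{1+n-m}}|y|^{-(2N-1-n-m)}\,dy$. The constant $C_{PI}(m,n,N)$ is unchanged, and the infimum over constants $c$ is preserved because constants are invariant under composition with the inversion.

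Next I match parameters. The exponential weight must be $e^{-\frac{2|y|^{1+b-a}}{1+b-a}}$ with $1+b-a>0$. However, the inversion produces exponent $-(1+n-m)<0$ together with a minus sign in front, so it cannot match directly. Instead I rescale the exponent by a power change of variables or rename the exponent. Following Theorem \ref{lemma 1.3}, the cleanest route is to work with weights of the form $|x|^{-2n}$ and the given exponential, and to apply the inversion to a Theorem \ref{lemma 1}-type inequality in which the exponent's sign is preserved. Concretely, I set $2n=2N-2b-4$, so that $n=N-b-2\le\frac{N-2}{2}$ exactly when $b\ge\frac{N-2}{2}$, and I keep $1+n-m=1+b-a>0$, which forces $m=N+a-2b-2$. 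With these choices, the left weight of Theorem \ref{lemma 1} is already $|x|^{-(2N-2b-4)}$. The right weight exponent is $1+n+m=1+(N-b-2)+(N+a-2b-2)=2N+a-3b-3$, which matches the statement exactly. The hypotheses of Theorem \ref{lemma 1} hold because $1+n-m>0$ and $n\le\frac{N-2}{2}$. Therefore Theorem \ref{lemma 1} applies verbatim after this relabeling, with no inversion needed, and the constant is $C_{PI}(m,n,N)=C_{PI}(N+a-2b-2,N-b-2,N)$. Sharpness and the optimizers carry over from Theorem \ref{lemma 1}.

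The only real obstacle is the bookkeeping: confirming that the parameter map sends the region $\{1+b-a>0,\ b\ge\frac{N-2}{2}\}$ into the admissible region of Theorem \ref{lemma 1}, and that the exponential factor is unchanged because $1+n-m=1+b-a$. Both checks are immediate from the substitutions above.
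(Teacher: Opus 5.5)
Your proposal is correct, and the argument you finally settle on is exactly the paper's proof: apply Theorem~\ref{lemma 1} verbatim with $n=N-b-2$ and $m=N+a-2b-2$, so that $1+n-m=1+b-a>0$, $n\le\frac{N-2}{2}\iff b\ge\frac{N-2}{2}$, and $1+n+m=2N+a-3b-3$. The Kelvin-transform discussion in your first two paragraphs is an abandoned detour and should be deleted.
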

\begin{proof}
    By Theorem \ref{lemma 1}, for $1 + n - m > 0$ and $n \leq \frac{N-2}{2}$,
    \[
    \int_{\mathbb{R}^N} |\nabla v|^2 \frac{e^{-\frac{2|x|^{1+n-m}}{1+n-m}}}{|x|^{2n}} \, dx \geq C_{PI}(m,n,N) \inf_{c \in \mathbb{R}} \int_{\mathbb{R}^N} |v-c|^2 \frac{e^{-\frac{2|x|^{1+n-m}}{1+n-m}}}{|x|^{1+n+m}} \, dx.
    \]
    Choosing $a,b$ such that $2 N - 2 b - 4 = 2 n$ and $1 + b - a = 1 + n - m$, we have $b = N - 2 - n \geq \frac{N-2}{2}$ and $1 + b - a > 0$. Thus,
    \[
    \int_{\mathbb{R}^N} |\nabla v|^2 \frac{e^{-\frac{2|x|^{1+b-a}}{1+b-a}}}{|x|^{2N - 2b - 4}} \, dx \geq C_{PI}(N + a - 2 b - 2, N - b - 2, N) \inf_{c \in \mathbb{R}} \int_{\mathbb{R}^N} |v-c|^2 \frac{e^{-\frac{2|x|^{1+b-a}}{1+b-a}}}{|x|^{2N + a - 3 b - 3}} \, dx,
    \]
    so that
    \[
    C_{4, PI}(a,b,N) = C_{PI}(N + a - 2 b - 2, N - b - 2, N).
    \]
\end{proof}

\section{A complete characterization of the Sharp Stability estimates of the $L^2$-Caffarelli-Kohn-Nirenberg inequalities--Proof of Theorem \ref{staL2CKN}}
We first recall the following result in \cite{DFLL23}:
\begin{theorem}\label{L2CKNI}
    Let $\Omega$ be an open set in $\mathbb{R}^N$, $N \geq 1$, $\alpha > 0$, $A \in C^1(\Omega)$ and $\mathbf{X} \in C^1(\Omega, \mathbb{R}^N)$. Then for any $u \in C_0^1(\Omega)$, we have
\begin{align*}
    &|\alpha|^2 \int_\Omega A |\nabla u|^2 \, dx + \frac{1}{|\alpha|^2} \int_\Omega A |\mathbf{X}|^2 |u|^2 \, dx  \\
    &= -\int_\Omega \mathrm{div}(A\mathbf{X}) |u|^2 \, dx + \int_\Omega A \left| \alpha \nabla u - \frac{1}{\alpha} u \mathbf{X} \right|^2 dx.
\end{align*}
As a consequence,
\begin{align*}
    \int_\Omega A |\nabla u|^2 dx 
    &= \int_\Omega \left( -\mathrm{div}(A\mathbf{X}) - A |\mathbf{X}|^2 \right) |u|^2 dx + \int_\Omega A |\nabla u + \mathbf{X} u|^2 dx.
\end{align*}
Also, if $A \geq 0$, then for any $u \in C_0^1(\Omega)\setminus \{0\}$:
\begin{align*}
    &\left( \int_\Omega A |\nabla u|^2 dx \right)^{\frac{1}{2}} \left( \int_\Omega A |\mathbf{X}|^2 |u|^2 dx \right)^{\frac{1}{2}} + \frac{1}{2} \int_\Omega \mathrm{div}(A\mathbf{X}) |u|^2 dx \\
    &= \frac{1}{2} \int_\Omega A \left| \left( \frac{\int_\Omega A |\mathbf{X}|^2 |u|^2 dx}{\int_\Omega A |\nabla u|^2 dx} \right)^{\frac{1}{4}} \nabla u 
    - \left( \frac{\int_\Omega A |\nabla u|^2 dx}{\int_\Omega A |\mathbf{X}|^2 |u|^2 dx} \right)^{\frac{1}{4}} u \mathbf{X} \right|^2 dx.
\end{align*}
\end{theorem}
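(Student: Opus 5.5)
The plan is to prove the pointwise algebraic identity first and then integrate, using the divergence theorem for the cross term. This is legitimate because $u$ has compact support in $\Omega$ and $A\mathbf{X}\in C^1$.

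For the first identity, expand the square
\[
A\left|\alpha\nabla u-\tfrac{1}{\alpha}u\mathbf{X}\right|^2
= |\alpha|^2A|\nabla u|^2+\tfrac{1}{|\alpha|^2}A|\mathbf{X}|^2|u|^2-2A u\,\nabla u\cdot\mathbf{X},
\]
which uses $\alpha>0$ real. Write $2u\nabla u=\nabla(|u|^2)$ and integrate by parts:
\[
-\int_\Omega A\mathbf{X}\cdot\nabla(|u|^2)\,dx=\int_\Omega \mathrm{div}(A\mathbf{X})|u|^2\,dx .
\]
The boundary terms vanish because $u\in C_0^1(\Omega)$. Rearranging gives exactly the stated identity.

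For the consequence, do the same computation with $\alpha=1$ and $\mathbf{X}$ replaced by $-\mathbf{X}$. This gives
\[
\int_\Omega A|\nabla u+\mathbf{X}u|^2\,dx=\int_\Omega A|\nabla u|^2+\int_\Omega A|\mathbf{X}|^2|u|^2+\int_\Omega A\mathbf{X}\cdot\nabla(|u|^2),
\]
and after integrating by parts the last term becomes $-\int_\Omega \mathrm{div}(A\mathbf{X})|u|^2$. Solving for $\int_\Omega A|\nabla u|^2$ gives the second display.

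For the third statement, assume $A\ge 0$ and $u\not\equiv0$. Set $P=\int_\Omega A|\nabla u|^2$ and $Q=\int_\Omega A|\mathbf{X}|^2|u|^2$, and apply the first identity with $\alpha=(Q/P)^{1/4}$, so that $1/\alpha=(P/Q)^{1/4}$. Then
\[
\alpha^2P+\alpha^{-2}Q=2\sqrt{PQ}.
\]
Dividing the first identity by $2$ yields the claimed formula.

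The only delicate point is degeneracy: $\alpha$ must be positive and finite, which requires $P,Q>0$. When $u\neq0$ and $A\ge0$ this can fail only in degenerate situations, namely $A|\mathbf{X}|^2u\equiv0$ or $A\nabla u\equiv0$. In those cases I would either interpret the formula with the understanding that the relevant quantities are positive (the implicit hypothesis), or check directly that both sides reduce consistently via the first identity. Apart from this, the proof is a direct computation.
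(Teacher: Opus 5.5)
Your arguments for the first identity (expand the square, write $2u\nabla u=\nabla(|u|^2)$, integrate by parts) and for the third one are correct. In the third, you take $\alpha=(Q/P)^{1/4}$ so that $\alpha^2P+\alpha^{-2}Q=2\sqrt{PQ}$; $P,Q>0$ is implicitly assumed, since the formula contains $P/Q$ and $Q/P$. The paper gives no proof of its own: it quotes this result from earlier work, and this direct computation is the standard argument.

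The problem is the middle display. Your own computation gives
\[
\int_\Omega A|\nabla u+\mathbf{X}u|^2\,dx=\int_\Omega A|\nabla u|^2\,dx+\int_\Omega A|\mathbf{X}|^2|u|^2\,dx-\int_\Omega \mathrm{div}(A\mathbf{X})|u|^2\,dx,
\]
and solving for $\int_\Omega A|\nabla u|^2\,dx$ yields
\[
\int_\Omega A|\nabla u|^2\,dx=\int_\Omega\bigl(\mathrm{div}(A\mathbf{X})-A|\mathbf{X}|^2\bigr)|u|^2\,dx+\int_\Omega A|\nabla u+\mathbf{X}u|^2\,dx .
\]
This has $+\mathrm{div}(A\mathbf{X})$, not the $-\mathrm{div}(A\mathbf{X})$ of the stated display. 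The two differ by $2\int_\Omega\mathrm{div}(A\mathbf{X})|u|^2\,dx$, which is not zero in general. For $\Omega=\mathbb{R}^N$, $A\equiv 1$, $\mathbf{X}(x)=x$, the display as printed reduces to $2N\int_{\mathbb{R}^N}|u|^2\,dx=0$. So your sentence ``solving for $\int_\Omega A|\nabla u|^2$ gives the second display'' is false. The display contains a sign error and cannot be proved as written; you should have flagged this rather than asserted agreement.

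The correct consequence is simply the case $\alpha=1$ of the first identity, without replacing $\mathbf{X}$ by $-\mathbf{X}$:
\[
\int_\Omega A|\nabla u|^2\,dx=\int_\Omega\bigl(-\mathrm{div}(A\mathbf{X})-A|\mathbf{X}|^2\bigr)|u|^2\,dx+\int_\Omega A|\nabla u-\mathbf{X}u|^2\,dx .
\]
This is equivalent to your $+\mathrm{div}$ version under $\mathbf{X}\mapsto-\mathbf{X}$.

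It is also the form the paper actually uses in Section 3. Take $A=|x|^{-2b}$, $\mathbf{X}=-|x|^{b-a}\frac{x}{|x|}$ and $\phi=\frac{|x|^{b+1-a}}{b+1-a}$. Then $-\mathrm{div}(A\mathbf{X})=(N-a-b-1)|x|^{-a-b-1}$ and $|\nabla u-\mathbf{X}u|^2=e^{-2\phi}|\nabla(ue^{\phi})|^2$. This is exactly the representation of $\delta_1(u)$ used there, with remainder $|\nabla u-\mathbf{X}u|^2$ rather than $|\nabla u+\mathbf{X}u|^2$.
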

We are now ready to establish several sharp stability estimates for the $L^2$-CKN inequalities and present a proof of Theorem \ref{staL2CKN}. More precisely, Theorem \ref{staL2CKN} will be proved using the following results.

Recall that \[
C_{PI}(a,b,N) = \min\left\{ C_1:=2(1+b-a),\, D_1:= \sqrt{(N-2-2b)^2 + 4(N-1)} - (N-2-2b) \right\}.
\]
Denote
$$\delta_1 (u)=\int_{\mathbb{R}^{N}}\dfrac{|\nabla u|^{2}}{|x|^{2b}}dx+\int_{\mathbb{R}%
^{N}}\dfrac{|u|^{2}}{|x|^{2a}}dx-2C\left(N,a,b\right)  \int_{\mathbb{R}^{N}%
}\dfrac{|u|^{2}}{|x|^{a+b+1}}dx$$ and 
$$\delta_2 (u)=\left(  \int_{\mathbb{R}^{N}}\dfrac{|\nabla u|^{2}}{|x|^{2b}}dx\right)
^{\frac{1}{2}}\left(  \int_{\mathbb{R}^{N}}\dfrac{|u|^{2}}{|x|^{2a}}dx\right)
^{\frac{1}{2}}-C\left( N,a,b\right) \left(  \int
_{\mathbb{R}^{N}}\dfrac{|u|^{2}}{|x|^{a+b+1}}dx\right)$$ with \[
C(N,a,b)
= \max \left\{
\frac{|N-(a+b+1)|}{2}, \,
\frac{|N-(3b - a + 3)|}{2}
\right\}.
\]
Then we have
\begin{theorem}
Let $b+1-a>0$ and $b\leq\dfrac{N-2}{2}$. Then for $u\in C_{0}^{\infty}\left(
\mathbb{R}^{N} \right)  :$
\begin{equation*}
\delta_1(u)  \geq C_{PI}(a,b,N)\inf_{c \in \mathbb{R}}\int_{\mathbb{R}^N}\left|u-ce^{-\frac{|x|^{1+b-a}}{1+b-a}}\right|^2\dfrac{dx}{|x|^{1+b+a}}
\end{equation*}
and
\begin{equation*}
\delta_2(u)\geq \frac{C_{PI}(a,b,N)}{2}\inf_{c \in \mathbb{R}, \lambda>0}\displaystyle\int_{\mathbb{R}^N}\dfrac{\left|u-ce^{-\frac{\lambda|x|^{b+1-a}}{(b+1-a)}}\right|^2}{|x|^{1+b+a}}dx.
\end{equation*}
Moreover, if $C_1 < D_1$, then 
\begin{align*}
&\delta_1(u)-C_{PI}(a, b, N)\inf_{c \in \mathbb{R}}\int_{\mathbb{R}^N}\left|u-ce^{-\frac{|x|^{1+b-a}}{1+b-a}}\right|^2\dfrac{dx}{|x|^{1+b+a}}\\
&\geq \min\left\{C_1,D_1-C_1\right\} \inf_{c,d \in \mathbb{R}} \int_{\mathbb{R}^N} \left\vert u -(c+  d|x|^{1+b-a})e^{-\frac{|x|^{1+b-a}}{1+b-a}} \right\vert^2 \frac{dx}{|x|^{a+b+1}}
\end{align*}
and 
\begin{align*}
&\delta_2(u)-\frac{C_{PI}(a,b,N)}{2}\inf_{c \in \mathbb{R}, \lambda>0}\displaystyle\int_{\mathbb{R}^N}\dfrac{\left|u-ce^{-\frac{\lambda|x|^{b+1-a}}{(b+1-a)}}\right|^2}{|x|^{1+b+a}}dx\\
&\geq \frac{\min\left\{C_1,D_1-C_1\right\}}{2} \inf_{c,d \in \mathbb{R},\lambda > 0} \int_{\mathbb{R}^N} \left\vert u -(c+  d|x|^{1+b-a})e^{-\frac{\lambda|x|^{1+b-a}}{1+b-a}} \right\vert^2 \frac{dx}{|x|^{a+b+1}}.
\end{align*}
If $C_1 > D_1$, then
\begin{align*}
&\delta_1(u)-C_{PI}(a, b, N)\inf_{c \in \mathbb{R}}\int_{\mathbb{R}^N}\left|u-ce^{-\frac{|x|^{1+b-a}}{1+b-a}}\right|^2\dfrac{dx}{|x|^{1+b+a}}\\
&\geq \min\left\{C_1- D_1,D_2-D_1 \right\} \inf_{c \in \mathbb{R},\mathbf{d} \in \mathbb{R}^N}\int_{\mathbb{R}^N} \left\vert u-(c + |x|^{\frac{D_1}{2}-1}\mathbf{d} \cdot x)e^{-\frac{|x|^{1+b-a}}{1+b-a}}\right\vert^2 \frac{dx}{|x|^{a+b+1}}
\end{align*} 
and 
\begin{align*}
&\delta_2(u)-\frac{C_{PI}(a,b,N)}{2}\inf_{c \in \mathbb{R}, \lambda>0}\displaystyle\int_{\mathbb{R}^N}\dfrac{\left|u-ce^{-\frac{\lambda|x|^{b+1-a}}{(b+1-a)}}\right|^2}{|x|^{1+b+a}}dx\\
&\geq \frac{\min\left\{C_1- D_1,D_2-D_1 \right\}}{2} \inf_{c \in \mathbb{R},\lambda>0,\mathbf{d} \in \mathbb{R}^N}\int_{\mathbb{R}^N} \left\vert u-(c + |x|^{\frac{D_1}{2}-1}\mathbf{d} \cdot x)e^{-\frac{\lambda|x|^{1+b-a}}{1+b-a}}\right\vert^2 \frac{dx}{|x|^{a+b+1}}.
\end{align*}
If $C_1=D_1$, then 
\begin{align*}
&\delta_1(u)-C_{PI}(a, b, N)\inf_{c \in \mathbb{R}}\int_{\mathbb{R}^N}\left|u-ce^{-\frac{|x|^{1+b-a}}{1+b-a}}\right|^2\dfrac{dx}{|x|^{1+b+a}}\\
&\geq \min\left\{D_1,D_2-D_1 \right\} \inf_{c, d \in \mathbb{R},\mathbf{d} \in \mathbb{R}^N}\int_{\mathbb{R}^N} \left\vert u-(c +d|x|^{1+b-a}+ |x|^{\frac{D_1}{2}-1}\mathbf{d} \cdot x)e^{-\frac{|x|^{1+b-a}}{1+b-a}}\right\vert^2 \frac{dx}{|x|^{a+b+1}}
\end{align*}
and 
\begin{align*}
&\delta_2(u)-\frac{C_{PI}(a,b,N)}{2}\inf_{c \in \mathbb{R}, \lambda>0}\displaystyle\int_{\mathbb{R}^N}\dfrac{\left|u-ce^{-\frac{\lambda|x|^{b+1-a}}{(b+1-a)}}\right|^2}{|x|^{1+b+a}}dx\\
&\geq \frac{\min\left\{D_1,D_2-D_1 \right\}}{2} \inf_{c, d \in \mathbb{R},\lambda >0,\mathbf{d} \in \mathbb{R}^N}\int_{\mathbb{R}^N} \left\vert u-(c +d|x|^{1+b-a}+ |x|^{\frac{D_1}{2}-1}\mathbf{d} \cdot x)e^{-\frac{\lambda|x|^{1+b-a}}{1+b-a}}\right\vert^2 \frac{dx}{|x|^{a+b+1}}.
\end{align*}
Here \[
D_2 = \sqrt{(N - 2 - 2b)^2 + 8N} - (N - 2 - 2b).
\]
\end{theorem}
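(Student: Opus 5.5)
The plan follows the strategy of \cite{CFLL24}: compute the CKN remainder exactly via Theorem \ref{L2CKNI}, then apply the weighted Poincar\'e inequality (Theorem \ref{lemma 1}) to that remainder after factoring out the Gaussian-type optimizer.

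\textbf{Step 1 (exact remainder).} Apply the second identity of Theorem \ref{L2CKNI} with $\Omega=\mathbb{R}^N\setminus\{0\}$, $A=|x|^{-2b}$ and $\mathbf X=|x|^{b-a-1}x$. Then
\[
A|\mathbf X|^2=|x|^{-2a},\qquad \mathrm{div}(A\mathbf X)=(N-a-b-1)|x|^{-a-b-1}.
\]
Since $a+b+1<N$ here, $C(N,a,b)=\frac{N-a-b-1}{2}$, so $2C=N-a-b-1$. This gives
\[
\delta_1(u)=\int_{\mathbb{R}^N}|\nabla u+\mathbf X u|^2\frac{dx}{|x|^{2b}}.
\]
Write $u=v\,e^{-\frac{|x|^{1+b-a}}{1+b-a}}$. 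Because $\nabla e^{-\frac{|x|^{1+b-a}}{1+b-a}}=-\mathbf X e^{-\frac{|x|^{1+b-a}}{1+b-a}}$, we get $\nabla u+\mathbf Xu=e^{-\frac{|x|^{1+b-a}}{1+b-a}}\nabla v$, and therefore
\[
\delta_1(u)=\int_{\mathbb{R}^N}|\nabla v|^2\frac{e^{-\frac{2|x|^{1+b-a}}{1+b-a}}}{|x|^{2b}}\,dx.
\]

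\textbf{Step 2 (Poincar\'e inequality and its stability).} Apply \eqref{Poin1} to $v$. Since
\[
\int|v-c|^2e^{-\frac{2|x|^{1+b-a}}{1+b-a}}|x|^{-1-a-b}\,dx=\int\bigl|u-ce^{-\frac{|x|^{1+b-a}}{1+b-a}}\bigr|^2|x|^{-1-a-b}\,dx,
\]
this yields the first $\delta_1$ bound. The higher-order bounds come the same way from the corresponding stability estimates of Theorem \ref{lemma 1}, in the three cases $C_1<D_1$, $C_1>D_1$ and $C_1=D_1$. In each case the $L^2$ stability statement there is rewritten by the same multiplication by $e^{-\frac{|x|^{1+b-a}}{1+b-a}}$. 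A technical point: $v$ need not be compactly supported. I would handle this by density, using that all the integrals are finite for $u\in C_0^\infty$, possibly after first reducing to $u$ supported away from the origin.

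\textbf{Step 3 (passing to $\delta_2$ by scaling).} For $\lambda>0$ set $u_\lambda(x)=u(\mu x)$. The three CKN integrals scale with different powers of $\mu$, and $\delta_2$ is invariant in the appropriate sense. The standard trick is to use
\[
\delta_2(u)=\tfrac12\inf_{t>0}\Bigl[t\!\int\frac{|\nabla u|^2}{|x|^{2b}}+t^{-1}\!\int\frac{|u|^2}{|x|^{2a}}\Bigr]-C\int\frac{|u|^2}{|x|^{a+b+1}}.
\]
The bracket at level $t$ is $\delta_1$ for a rescaled function (equivalently, for the vector field $\lambda\mathbf X$). Concretely, I would rerun Steps 1--2 with $\mathbf X$ replaced by $\lambda\mathbf X$ and the weight $e^{-\lambda|x|^{1+b-a}/(1+b-a)}$. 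This gives
\[
\int\frac{|\nabla u|^2}{|x|^{2b}}+\lambda^2\!\int\frac{|u|^2}{|x|^{2a}}-2\lambda C\!\int\frac{|u|^2}{|x|^{a+b+1}}\ \ge\ \lambda\,C_{PI}\inf_c\!\int\frac{\bigl|u-ce^{-\frac{\lambda|x|^{1+b-a}}{1+b-a}}\bigr|^2}{|x|^{1+a+b}}\,dx.
\]
The factor $\lambda$ arises because the Poincar\'e constant scales linearly in $\lambda$ after the change of variables $x\mapsto\lambda^{1/(1+b-a)}x$; this needs to be checked. Choosing $\lambda=\bigl(\int|\nabla u|^2|x|^{-2b}/\int|u|^2|x|^{-2a}\bigr)^{1/2}$ and dividing by $2\lambda$ turns the left side into $\delta_2(u)$. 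The right side then has constant $C_{PI}/2$ and an infimum over $\lambda$. The stability-of-stability bounds transfer identically with the factor $\tfrac12$.

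\textbf{Main obstacle.} The main obstacle I expect is the scaling bookkeeping in Step 3, namely verifying that the rescaled Poincar\'e inequality carries exactly the factor $\lambda$ with the weight $|x|^{-1-a-b}$. A second delicate point is justifying the application of Theorem \ref{lemma 1} to the non-compactly-supported $v=ue^{+\lambda|x|^{1+b-a}/(1+b-a)}$ and its integrability near $0$.
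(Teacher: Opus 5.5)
Your proposal is correct and is essentially the paper's proof. It uses the exact remainder from Theorem \ref{L2CKNI} with $A=|x|^{-2b}$ and the radial field $|x|^{b-a-1}x$ (your sign is the one that gives $\delta_1(u)=\int|\nabla u+\mathbf{X}u|^2|x|^{-2b}\,dx$ by direct expansion), the substitution $u=v\,e^{-|x|^{1+b-a}/(1+b-a)}$, and Theorem \ref{lemma 1} with its stability estimates, whose comparison families are dilation invariant and so transfer. It then passes to $\delta_2$ through the rescaled Poincar\'e inequality, whose factor $\lambda$ you computed correctly, evaluated at the AM--GM optimal $\lambda$.

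One small correction: $v=u\,e^{|x|^{1+b-a}/(1+b-a)}$ has the same compact support as $u$. The only genuine issue is that $e^{|x|^{1+b-a}/(1+b-a)}$ is not smooth at the origin, and your reduction to $u\in C_0^\infty(\mathbb{R}^N\setminus\{0\})$ handles that.
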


\begin{proof}
By choosing $A=\frac{1}{|x|^{2b}}$ and $\mathbf{X}=-|x|^{b-a}\frac{x}{|x|}$ in Theorem \ref{L2CKNI}, and using Theorem \ref{lemma 1}, we obtain
\begin{align*}
\delta_1(u)&=\int_{\mathbb{R}^{N}}\dfrac{1}{|x|^{2b}}\left\vert \nabla\left(
u.e^{\frac{|x|^{b+1-a}}{b+1-a}}\right)  \right\vert ^{2}e^{-\frac
{2|x|^{b+1-a}}{b+1-a}}dx\\
& \geq C_{PI}(a, b, N)\inf_{c \in \mathbb{R}}\int_{\mathbb{R}^N}\left|u-ce^{-\frac{|x|^{1+b-a}}{1+b-a}}\right|^2\dfrac{dx}{|x|^{1+b+a}}.
\end{align*}
Similarly, we also get that if $C_1 < D_1$, then 
\begin{align*}
&\delta_1(u)-C_{PI}(a, b, N)\inf_{c \in \mathbb{R}}\int_{\mathbb{R}^N}\left|u-ce^{-\frac{|x|^{1+b-a}}{1+b-a}}\right|^2\dfrac{dx}{|x|^{1+b+a}}\\
&\geq \min\left\{C_1,D_1-C_1\right\} \inf_{c,d \in \mathbb{R}} \int_{\mathbb{R}^N} \left\vert u -(c+  d|x|^{1+b-a})e^{-\frac{|x|^{1+b-a}}{1+b-a}} \right\vert^2 \frac{dx}{|x|^{a+b+1}}.
\end{align*}
If $C_1 > D_1$, then
\begin{align*}
&\delta_1(u)-C_{PI}(a, b, N)\inf_{c \in \mathbb{R}}\int_{\mathbb{R}^N}\left|u-ce^{-\frac{|x|^{1+b-a}}{1+b-a}}\right|^2\dfrac{dx}{|x|^{1+b+a}}\\
&\geq \min\left\{C_1- D_1,D_2-D_1 \right\} \inf_{c \in \mathbb{R},\mathbf{d} \in \mathbb{R}^N}\int_{\mathbb{R}^N} \left\vert u-(c + |x|^{\frac{D_1}{2}-1}\mathbf{d} \cdot x)e^{-\frac{|x|^{1+b-a}}{1+b-a}}\right\vert^2 \frac{dx}{|x|^{a+b+1}}.
\end{align*}
If $C_1=D_1$, then 
\begin{align*}
&\delta_1(u)-C_{PI}(a, b, N)\inf_{c \in \mathbb{R}}\int_{\mathbb{R}^N}\left|u-ce^{-\frac{|x|^{1+b-a}}{1+b-a}}\right|^2\dfrac{dx}{|x|^{1+b+a}}\\
&\geq \min\left\{D_1,D_2-D_1 \right\} \inf_{c, d \in \mathbb{R},\mathbf{d} \in \mathbb{R}^N}\int_{\mathbb{R}^N} \left\vert v-(c +d|x|^{1+b-a}+ |x|^{\frac{D_1}{2}-1}\mathbf{d} \cdot x)e^{-\frac{|x|^{1+b-a}}{1+b-a}}\right\vert^2 \frac{dx}{|x|^{a+b+1}}.
\end{align*}
For the deficit $\delta_2$, by applying a standard scaling argument to Theorem \ref{lemma 1}, for any $\lambda>0$, we have
$$\lambda^{1+b-a}\int_{\mathbb{R}^N} |\nabla v|^2 \frac{e^{-\frac{2|x|^{1+b-a}}{(1+b-a)\lambda^{1+b-a}}}}{|x|^{2b}} \, dx \geq C_{PI}(a,b,N) \inf_{c \in \mathbb{R}} \int_{\mathbb{R}^N} |v-c|^2 \frac{e^{-\frac{2|x|^{1+b-a}}{(1+b-a)\lambda^{1+b-a}}}}{|x|^{1+b+a}}  \, dx.$$
Then, once again, by choosing $A=\frac{1}{|x|^{2b}}$ and $\mathbf{X}=-|x|^{b-a}\frac{x}{|x|}$ in Theorem \ref{L2CKNI}, we get
\begin{align*}
\delta_2(u) &=\frac{1}{2}\lambda^{b-a+1}\int_{\mathbb{R}^{N}}\frac{1}{|x|^{2b}%
}\left\vert \nabla\left(  ue^{\frac{|x|^{b+1-a}}{\left(  b+1-a\right)
\lambda^{b-a+1}}}\right)  \right\vert ^{2}e^{-\frac{2|x|^{b+1-a}}{\left(
b+1-a\right)  \lambda^{b-a+1}}}dx\\
&\geq \frac{C_{PI}(a,b,N)}{2}\inf_{c \in \mathbb{R}}\displaystyle\int_{\mathbb{R}^N}\dfrac{\left|u-ce^{-\frac{|x|^{b+1-a}}{(b+1-a)\lambda^{b-a+1}}}\right|^2}{|x|^{1+b+a}}dx\\
&\geq \frac{C_{PI}(a,b,N)}{2}\inf_{c \in \mathbb{R}, \lambda>0}\displaystyle\int_{\mathbb{R}^N}\dfrac{\left|u-ce^{-\frac{\lambda|x|^{b+1-a}}{(b+1-a)}}\right|^2}{|x|^{1+b+a}}dx.
\end{align*}

Similarly, we can also deduce other estimates for $\delta_2$.
\end{proof}

Using a similar method as in the theorem above, we derive sharp stability 
estimates for the $L^2$-CKN inequalities and their refinements on other domains. 
For conciseness, we present only the sharp inequalities and their constants here.

\begin{theorem}
Let $b+1-a<0$ and $b\geq\dfrac{N-2}{2}$. Then for $u\in C_{0}^{\infty}\left(
\mathbb{R}^{N}\setminus\left\{  0\right\}  \right)  :$
\begin{equation*}
 \delta_1(u)\geq C_{2,PI}(a,b,N)\inf_{c \in \mathbb{R}}\int_{\mathbb{R}^N}\left|u-ce^{\frac{|x|^{b+1-a}}{b+1-a}}\right|^2\dfrac{dx}{|x|^{1+b+a}}.
\end{equation*}
Also, for $u\in C_{0}^{\infty}\left(  \mathbb{R}^{N}\setminus\left\{
0\right\}  \right)  \setminus\left\{  0\right\} $:
\begin{equation*}
\delta_2(u)\geq \frac{C_{2,PI}(a,b,N)}{2}\inf_{c \in \mathbb{R}, \lambda>0}\displaystyle\int_{\mathbb{R}^N}\dfrac{\left|u-ce^{\frac{\lambda|x|^{b+1-a}}{(b+1-a)}}\right|^2}{|x|^{1+b+a}}dx.
\end{equation*}
\end{theorem}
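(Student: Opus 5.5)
The plan is to mirror the proof of the preceding theorem: apply the identity of Theorem \ref{L2CKNI} with a well-chosen pair $(A,\mathbf{X})$, and then invoke the weighted Poincar\'e inequality of Theorem \ref{lemma 2} in place of Theorem \ref{lemma 1}. In the regime $b+1-a<0$, $b\geq \frac{N-2}{2}$ we have $(a,b)\in\mathcal{A}_2$, so $C(N,a,b)=\frac{a+b+1-N}{2}$ and the optimizer is $e^{\frac{|x|^{b+1-a}}{b+1-a}}$, which decays at infinity since $b+1-a<0$. Accordingly I take
\[
A=\frac{1}{|x|^{2b}}, \qquad \mathbf{X}=|x|^{b-a}\frac{x}{|x|}
\]
(the opposite sign to the previous case), so that $A|\mathbf{X}|^2=|x|^{-2a}$ and $-\mathrm{div}(A\mathbf{X})-A|\mathbf{X}|^2$ yields the right multiple of $|x|^{-(a+b+1)}$ via $\mathrm{div}(|x|^{-a-b-1}x)=(N-a-b-1)|x|^{-a-b-1}$.

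For $\delta_1$, the first step is to verify that $\nabla u+\mathbf{X}u = e^{-\phi}\nabla(ue^{\phi})$ with $\phi(x)=\frac{|x|^{b+1-a}}{b+1-a}$; indeed $\nabla\phi=|x|^{b-a}\frac{x}{|x|}=\mathbf{X}$. The identity in Theorem \ref{L2CKNI} then gives
\[
\delta_1(u)=\int_{\mathbb{R}^N}\frac{1}{|x|^{2b}}\left|\nabla\left(ue^{-\frac{|x|^{b+1-a}}{b+1-a}}\right)\right|^2 e^{\frac{2|x|^{b+1-a}}{b+1-a}}\,dx ,
\]
once the signs are arranged so that the potential term equals exactly $2C(N,a,b)|x|^{-(a+b+1)}$. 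This should be checked carefully, since the algebra produces the coefficient $a+b+1-N>0$. Setting $v=ue^{-\frac{|x|^{b+1-a}}{b+1-a}}$ and applying Theorem \ref{lemma 2} then gives the bound $C_{2,PI}(a,b,N)\inf_c\int|v-c|^2 e^{\frac{2|x|^{b+1-a}}{b+1-a}}|x|^{-(1+b+a)}dx$. Since $e^{\frac{2|x|^{b+1-a}}{b+1-a}}|v-c|^2=|u-ce^{\frac{|x|^{b+1-a}}{b+1-a}}|^2$, this is exactly the claimed bound. Because $u\in C_0^\infty(\mathbb{R}^N\setminus\{0\})$, $v$ is compactly supported away from the origin, so both Theorem \ref{L2CKNI} and Theorem \ref{lemma 2} (via density) apply.

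For $\delta_2$, I will use scaling. Replacing $\mathbf{X}$ by $\mu\mathbf{X}$ with $\mu>0$ and using the expansion in Theorem \ref{L2CKNI} gives
\[
\int\frac{|\nabla u|^2}{|x|^{2b}}+\mu^2\int\frac{|u|^2}{|x|^{2a}}-2\mu C\int\frac{|u|^2}{|x|^{a+b+1}}
=\int \frac{1}{|x|^{2b}}\left|\nabla(ue^{-\mu\phi})\right|^2e^{2\mu\phi}\,dx .
\]
Choosing $\mu=\left(\int|\nabla u|^2|x|^{-2b}\big/\int|u|^2|x|^{-2a}\right)^{1/2}$ makes the left side equal to $2\mu\,\delta_2(u)$.

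On the right I apply the scaled version of Theorem \ref{lemma 2}: with $x\mapsto\lambda x$ and $\lambda^{b+1-a}=\mu$, the Poincar\'e inequality for the weight $e^{2\mu\phi}$ holds with constant $\mu\,C_{2,PI}(a,b,N)$, because the gradient side scales as $\lambda^{N-2-2b}$ and the $L^2$ side as $\lambda^{N-1-a-b}$. Dividing by $2\mu$ gives
\[
\delta_2(u)\geq \frac{C_{2,PI}}{2}\inf_c\int \left|u-ce^{\mu\phi}\right|^2|x|^{-(1+b+a)}\,dx ,
\]
and taking the infimum over $\lambda=\mu>0$ yields the second estimate.

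The main obstacle is the sign and exponent bookkeeping in the scaling step. One must confirm that $\mu^{1/(b+1-a)}$ defines a legitimate dilation when $b+1-a<0$, and that the scaling factor comes out exactly $\mu$, i.e.\ $\lambda^{(N-2-2b)-(N-1-a-b)}=\lambda^{-(1+b-a)}$ with the correct orientation. If it does not, I will instead write out the radial change of variables directly as in Step 1 of Theorem \ref{lemma 1}.
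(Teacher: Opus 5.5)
Your proposal is correct and is essentially the paper's proof: the same choice $A=|x|^{-2b}$, $\mathbf{X}=|x|^{b-a}\frac{x}{|x|}$ in Theorem \ref{L2CKNI}, Theorem \ref{lemma 2} applied to $ue^{-\phi}$, and the same scaling step (your constant $\mu\, C_{2,PI}$ for the weight $e^{2\mu\phi}$ is the paper's rescaled Poincar\'e inequality, and the dilation is legitimate for either sign of $b+1-a$). The one thing to fix is the sign you flagged. The first identity of Theorem \ref{L2CKNI} with $\alpha=1$ gives the remainder $\int A|\nabla u-\mathbf{X}u|^2$; the ``$+\mathbf{X}u$'' in the printed consequence is a sign slip. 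It is $\nabla u-u\nabla\phi=e^{\phi}\nabla(ue^{-\phi})$, not $e^{-\phi}\nabla(ue^{\phi})$, that yields your displayed formula with the weight $e^{2\phi}$ covered by Theorem \ref{lemma 2}. Note also that $e^{\phi}\to 1$ at infinity and vanishes at the origin, rather than decaying at infinity.
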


\begin{proof}
By choosing $A=\frac{1}{|x|^{2b}}$ and $\mathbf{X}=|x|^{b-a}\frac{x}{|x|}$ in Theorem \ref{L2CKNI}, and using Theorem \ref{lemma 2}, we get
\begin{align*}
&  \int_{\mathbb{R}^{N}}\dfrac{|\nabla u|^{2}}{|x|^{2b}}dx+\int_{\mathbb{R}%
^{N}}\dfrac{|u|^{2}}{|x|^{2a}}dx-\left(  a+b+1-N\right)  \int_{\mathbb{R}^{N}%
}\dfrac{|u|^{2}}{|x|^{a+b+1}}dx\\
&  =\int_{\mathbb{R}^{N}}\dfrac{1}{|x|^{2b}}\left\vert \nabla\left(
u.e^{-\frac{|x|^{b+1-a}}{b+1-a}}\right)  \right\vert ^{2}e^{\frac
{2|x|^{b+1-a}}{b+1-a}}dx\\
&\geq C_{2,PI}(a,b,N)\inf_{c \in \mathbb{R}}\int_{\mathbb{R}^N}\left|u-ce^{\frac{|x|^{b+1-a}}{b+1-a}}\right|^2\dfrac{dx}{|x|^{1+b+a}}.
\end{align*}
For the second one, by applying a standard scaling argument to Theorem \ref{lemma 2}, for any $\lambda>0$, we have
$$\lambda^{1+b-a}\int_{\mathbb{R}^N} |\nabla v|^2 \frac{e^{\frac{2|x|^{1+b-a}}{(1+b-a)\lambda^{1+b-a}}}}{|x|^{2b}} \, dx \geq C_{2,PI}(a,b,N) \inf_{c \in \mathbb{R}} \int_{\mathbb{R}^N} |v-c|^2 \frac{e^{\frac{2|x|^{1+b-a}}{(1+b-a)\lambda^{1+b-a}}}}{|x|^{1+b+a}}  \, dx.$$
Then, by choosing $A=\frac{1}{|x|^{2b}}$ and $\mathbf{X}=-|x|^{b-a}\frac{x}{|x|}$ in Theorem \ref{L2CKNI}, we obtain
\begin{align*}
&  \left(  \int_{\mathbb{R}^{N}}\dfrac{|\nabla u|^{2}}{|x|^{2b}}dx\right)
^{\frac{1}{2}}\left(  \int_{\mathbb{R}^{N}}\dfrac{|u|^{2}}{|x|^{2a}}dx\right)
^{\frac{1}{2}}-\left( \dfrac{a+b+1-N}{2}\right) \left(  \int
_{\mathbb{R}^{N}}\dfrac{|u|^{2}}{|x|^{a+b+1}}dx\right) \\
&  =\frac{1}{2}\lambda^{b-a+1}\int_{\mathbb{R}^{N}}\frac{1}{|x|^{2b}%
}\left\vert \nabla\left(  ue^{-\frac{|x|^{b+1-a}}{\left(  b+1-a\right)
\lambda^{b-a+1}}}\right)  \right\vert ^{2}e^{\frac{2|x|^{b+1-a}}{\left(
b+1-a\right)  \lambda^{b-a+1}}}dx\\
&\geq \frac{C_{2,PI}(a,b,N)}{2}\inf_{c \in \mathbb{R}}\displaystyle\int_{\mathbb{R}^N}\dfrac{\left|u-ce^{\frac{|x|^{b+1-a}}{(b+1-a)\lambda^{b-a+1}}}\right|^2}{|x|^{1+b+a}}dx\\
&\geq \frac{C_{2,PI}(a,b,N)}{2}\inf_{c \in \mathbb{R}, \lambda>0}\displaystyle\int_{\mathbb{R}^N}\dfrac{\left|u-ce^{\frac{\lambda|x|^{b+1-a}}{(b+1-a)}}\right|^2}{|x|^{1+b+a}}dx.
\end{align*}

\end{proof}

\begin{theorem}
Let $b+1-a<0$ and $b\leq\dfrac{N-2}{2}$. Then for $u\in C_{0}^{\infty}\left(
\mathbb{R}^{N}\setminus\left\{  0\right\}  \right)  :$
\begin{equation*}
 \delta_1(u)\geq C_{3,PI}(a,b,N)\inf_{c \in \mathbb{R}}\int_{\mathbb{R}^N}\left|u-ce^{\frac{|x|^{b+1-a}}{b+1-a}}|x|^{2b+2-N}\right|^2\dfrac{dx}{|x|^{1+b+a}}.
\end{equation*}

Also, for $u\in C_{0}^{\infty}\left(  \mathbb{R}^{N}\setminus\left\{
0\right\}  \right)  \setminus\left\{  0\right\} $:
\begin{equation*}
 \delta_2(u)\geq \frac{C_{3,PI}(a,b,N)}{2}\inf_{c \in \mathbb{R}, \lambda>0}\displaystyle\int_{\mathbb{R}^N}\dfrac{\left|u-ce^{\frac{\lambda|x|^{b+1-a}}{(b+1-a)}}|x|^{2b+2-N}\right|^2}{|x|^{1+b+a}}dx.
\end{equation*}
\end{theorem}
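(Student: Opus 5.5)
The plan is a ground-state substitution against the optimizer given by Theorem~A, followed by Theorem~\ref{lemma 1.3}, mirroring the proof given above for the region $b+1-a>0$, $b\le\frac{N-2}{2}$. In the present region ($\mathcal{B}_1$) we have $a>b+1$ and $2b+2\le N$, so $3b-a+3<2b+2\le N$. Hence $C(N,a,b)=\frac{N-3b+a-3}{2}$, and the expected optimizer is
\[
u_*(x)=|x|^{2b+2-N}e^{\frac{|x|^{\beta}}{\beta}},\qquad \beta:=b+1-a<0 .
\]
For $u\in C_0^\infty(\mathbb{R}^N\setminus\{0\})$ I will write $u=u_*v$. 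Then $v$ is also smooth with compact support away from the origin, so all integrations by parts are free of boundary terms.

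\textbf{Step 1 (ground-state identity).} With $A=|x|^{-2b}$, the standard identity (the second formula of Theorem~\ref{L2CKNI} with $\mathbf{X}=-\nabla\log u_*$) gives
\[
\int A|\nabla(u_*v)|^2dx=\int A u_*^2|\nabla v|^2dx-\int \frac{\mathrm{div}(A\nabla u_*)}{u_*}|u|^2dx .
\]
Put $\varphi=\log u_*=(2b+2-N)\log r+r^\beta/\beta$. Then
\[
\frac{\mathrm{div}(A\nabla u_*)}{u_*}=\mathrm{div}(A\nabla\varphi)+A|\nabla\varphi|^2 .
\]
A direct radial computation gives:
\begin{itemize}
\item $\mathrm{div}(A\nabla\varphi)=-(N-2b-2)^2r^{-2b-2}+(N-a-b-1)r^{-a-b-1}$;
\item $A|\nabla\varphi|^2=(N-2b-2)^2r^{-2b-2}+2(2b+2-N)r^{-a-b-1}+r^{-2a}$.
\end{itemize}
The singular Hardy-type terms $r^{-2b-2}$ cancel, and the sum is $r^{-2a}+(3b-a+3-N)r^{-a-b-1}$. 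Therefore
\[
\delta_1(u)=\int_{\mathbb{R}^N}|\nabla v|^2\frac{e^{\frac{2|x|^{\beta}}{\beta}}}{|x|^{2N-2b-4}}dx .
\]

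\textbf{Step 2 (Poincar\'e).} Apply Theorem~\ref{lemma 1.3} to $v$. Its hypotheses $1+b-a<0$, $b\le\frac{N-2}{2}$ are exactly ours, and $v$ is an admissible test function. The right-hand side becomes
\[
C_{3,PI}\inf_c\int|v-c|^2e^{\frac{2|x|^\beta}{\beta}}|x|^{-(2N+a-3b-3)}dx .
\]
Since $|v-c|^2u_*^2=|u-cu_*|^2$ and $u_*^2=|x|^{4b+4-2N}e^{2|x|^\beta/\beta}$, the weights combine to $|x|^{-(4b+4-2N)-(2N+a-3b-3)}=|x|^{-(a+b+1)}$. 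This yields the $\delta_1$ estimate.

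\textbf{Step 3 ($\delta_2$ via scaling).} For $\lambda>0$ I repeat Step 1 with $u_{*,\lambda}=|x|^{2b+2-N}e^{\lambda|x|^\beta/\beta}$. The same computation, now with the $r^{\beta-1}$ part of $\varphi'$ multiplied by $\lambda$, gives
\[
\int\frac{|\nabla u|^2}{|x|^{2b}}+\lambda^2\int\frac{|u|^2}{|x|^{2a}}-2\lambda C(N,a,b)\int\frac{|u|^2}{|x|^{a+b+1}}=\int|\nabla v_\lambda|^2u_{*,\lambda}^2|x|^{-2b}dx .
\]
Rescaling $x\mapsto \mu x$ in Theorem~\ref{lemma 1.3} gives the Poincar\'e inequality for the weight $e^{2\lambda|x|^\beta/\beta}$ with constant $\lambda C_{3,PI}$. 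This is the step I expect to need the most care, namely checking that the homogeneities of the two weights produce exactly the factor $\lambda$, as in the $\lambda^{1+b-a}$ scaling used in the previous theorem.

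I then choose $\lambda=\big(\int|\nabla u|^2|x|^{-2b}/\int|u|^2|x|^{-2a}\big)^{1/2}$, which makes the left side equal to $2\lambda\,\delta_2(u)$. Dividing by $2\lambda$ gives
\[
\delta_2(u)\ge\frac{C_{3,PI}}{2}\inf_c\int\frac{|u-cu_{*,\lambda}|^2}{|x|^{1+b+a}}dx ,
\]
and taking the infimum over $\lambda>0$ finishes the proof.
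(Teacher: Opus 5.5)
Your proposal is correct and is essentially the paper's proof. The paper applies Theorem~\ref{L2CKNI} with $A=|x|^{-2b}$ and $\mathbf{X}=\bigl(|x|^{b-a}-(N-2b-2)|x|^{-1}\bigr)\frac{x}{|x|}$, which is exactly $\nabla\log u_*$ for your $u_*=|x|^{2b+2-N}e^{|x|^{b+1-a}/(b+1-a)}$, so that identity is your ground-state substitution $u=u_*v$; it then uses Theorem~\ref{lemma 1.3} and the same scaling argument in $\lambda$ for $\delta_2$. Your explicit cancellation of the $|x|^{-2b-2}$ terms and your check that rescaling produces exactly the factor $\lambda$ fill in details the paper leaves implicit. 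One small caveat: your displayed identity is correct and follows from the first formula of Theorem~\ref{L2CKNI} with $\alpha=1$ and $\mathbf{X}=\nabla\log u_*$, but the paper's ``consequence'' formula has a sign slip, so citing it with $\mathbf{X}=-\nabla\log u_*$ does not literally yield your identity.
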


\begin{proof}
Similarly as before, this time we choose $A = \frac{1}{|x|^{2b}}$ and $\mathbf{X} = \left( |x|^{b-a} - (N-2b-2)\frac{1}{|x|} \right) \frac{x}{|x|}$ in Theorem~\ref{L2CKNI}, and use Theorem~\ref{lemma 1.3} to obtain
\begin{align*}
&  \int_{\mathbb{R}^{N}}\dfrac{|\nabla u|^{2}}{|x|^{2b}}dx+\int_{\mathbb{R}%
^{N}}\dfrac{|u|^{2}}{|x|^{2a}}dx-\left(  N-3b+a-3\right)  \int_{\mathbb{R}%
^{N}}\dfrac{|u|^{2}}{|x|^{a+b+1}}dx\\
&  =\int_{\mathbb{R}^{N}}\dfrac{1}{|x|^{2N-2b-4}}\left\vert \nabla\left(
u|x|^{N-2b-2}e^{-\frac{|x|^{b+1-a}}{b+1-a}}\right)  \right\vert ^{2}%
e^{\frac{2|x|^{b+1-a}}{b+1-a}}dx\\
&\geq C_{3,PI}(a,b,N)\inf_{c \in \mathbb{R}}\int_{\mathbb{R}^N}\left|u-ce^{\frac{|x|^{b+1-a}}{b+1-a}}|x|^{2b+2-N}\right|^2\dfrac{dx}{|x|^{1+b+a}}.
\end{align*}
For the second one, by applying a standard scaling argument to Theorem \ref{lemma 1.3}, for any $\lambda>0$, we have
$$\lambda^{1+b-a}\int_{\mathbb{R}^N} |\nabla v|^2 \frac{e^{\frac{2|x|^{1+b-a}}{(1+b-a)\lambda^{1+b-a}}}}{|x|^{2N-2b-4}} \, dx \geq C_{3,PI}(a,b,N) \inf_{c \in \mathbb{R}} \int_{\mathbb{R}^N} |v-c|^2 \frac{e^{\frac{2|x|^{1+b-a}}{(1+b-a)\lambda^{1+b-a}}}}{|x|^{2N+a-3b-3}}  \, dx.$$
Then, again by choosing $A=\frac{1}{|x|^{2b}}$ and $\mathbf{X}=\left(|x|^{b-a}-(N-2b-2)\frac{1}{|x|}\right)\frac{x}{|x|}$ in Theorem \ref{L2CKNI}, we get
\begin{align*}
&  \left(  \int_{\mathbb{R}^{N}}\dfrac{|\nabla u|^{2}}{|x|^{2b}}dx\right)
^{\frac{1}{2}}\left(  \int_{\mathbb{R}^{N}}\dfrac{|u|^{2}}{|x|^{2a}}dx\right)
^{\frac{1}{2}}-\left( \dfrac{N-3b+a-3}{2}\right) \left(
\int_{\mathbb{R}^{N}}\dfrac{|u|^{2}}{|x|^{a+b+1}}dx\right) \\
&  =\frac{1}{2}\lambda^{b-a+1}\int_{\mathbb{R}^{N}}\dfrac{1}{|x|^{2N-2b-4}%
}\left\vert \nabla\left(  u\left\vert x\right\vert ^{N-2b-2}e^{-\frac
{|x|^{b+1-a}}{\left(  b+1-a\right)  \lambda^{b-a+1}}}\right)  \right\vert
^{2}e^{\frac{2|x|^{b+1-a}}{\left(  b+1-a\right)  \lambda^{b-a+1}}}dx\\
&\geq \frac{C_{3,PI}(a,b,N)}{2}\inf_{c \in \mathbb{R}}\displaystyle\int_{\mathbb{R}^N}\dfrac{\left|u-ce^{\frac{|x|^{b+1-a}}{(b+1-a)\lambda^{b-a+1}}}|x|^{2b+2-N}\right|^2}{|x|^{1+b+a}}dx\\
&\geq \frac{C_{3,PI}(a,b,N)}{2}\inf_{c \in \mathbb{R}, \lambda>0}\displaystyle\int_{\mathbb{R}^N}\dfrac{\left|u-ce^{\frac{\lambda|x|^{b+1-a}}{(b+1-a)}}|x|^{2b+2-N}\right|^2}{|x|^{1+b+a}}dx.
\end{align*}
\end{proof}

\begin{theorem}
Let $b+1-a>0$ and $b\geq\dfrac{N-2}{2}$. Then for $u\in C_{0}^{\infty}\left(
\mathbb{R}^{N}\setminus\left\{  0\right\}  \right)  :$
\begin{equation*}
 \delta_1(u)\geq C_{4,PI}(a,b,N)\inf_{c \in \mathbb{R}}\int_{\mathbb{R}^N}\left|u-ce^{-\frac{|x|^{b+1-a}}{b+1-a}}|x|^{2b+2-N}\right|^2\dfrac{dx}{|x|^{1+b+a}}.
\end{equation*}

Also, for $u\in C_{0}^{\infty}\left(  \mathbb{R}^{N}\setminus\left\{
0\right\}  \right)  \setminus\left\{  0\right\} $:
\begin{equation*}
 \delta_2(u)\geq \frac{ C_{4,PI}(a,b,N)}{2}\inf_{c \in \mathbb{R}, \lambda>0}\displaystyle\int_{\mathbb{R}^N}\dfrac{\left|u-ce^{-\frac{\lambda|x|^{b+1-a}}{(b+1-a)}}|x|^{2b+2-N}\right|^2}{|x|^{1+b+a}}dx.
\end{equation*}
\end{theorem}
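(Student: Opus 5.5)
I will follow the same scheme as in the previous case ($b+1-a<0$, $b\le \frac{N-2}{2}$), now combining the remainder identity of Theorem~\ref{L2CKNI} with the weighted Poincar\'e inequality of Theorem~\ref{lemma 1.4}. In this regime $C(N,a,b)=\frac{|N-(3b-a+3)|}{2}$. Since $b\ge\frac{N-2}{2}$ and $b+1-a>0$, we have $3b-a+3-N = (b+1-a)+(2b+2-N)>0$, so $2C(N,a,b)=3b-a+3-N$.

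First I treat $\delta_1$. I take $A=|x|^{-2b}$ and the vector field
\[
\mathbf{X}=\Big(-|x|^{b-a}-(N-2b-2)\frac{1}{|x|}\Big)\frac{x}{|x|},
\]
which corresponds to the ground state $|x|^{2b+2-N}e^{-|x|^{b+1-a}/(b+1-a)}$. I then check that
\[
-\mathrm{div}(A\mathbf{X})-A|\mathbf{X}|^2=-|x|^{-2a}+(3b-a+3-N)|x|^{-a-b-1}.
\]
This is a routine computation. The cross term $2(N-2b-2)|x|^{b-a-1}$ coming from $|\mathbf{X}|^2$ combines with the divergence terms, and the pure $|x|^{-2b-2}$ terms cancel because $\mathrm{div}(|x|^{-2b-1}x/|x|)=(N-2b-2)|x|^{-2b-2}$. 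By the second identity of Theorem~\ref{L2CKNI}, $\delta_1(u)=\int |x|^{-2b}|\nabla u+\mathbf{X}u|^2\,dx$.

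Next I substitute $u=v\,|x|^{2b+2-N}e^{-|x|^{b+1-a}/(b+1-a)}$, so that $\nabla u+\mathbf{X}u=|x|^{2b+2-N}e^{-\cdots}\nabla v$. This gives
\[
\delta_1(u)=\int \frac{|\nabla v|^2 e^{-\frac{2|x|^{1+b-a}}{1+b-a}}}{|x|^{2N-2b-4}}\,dx .
\]
I apply Theorem~\ref{lemma 1.4} to this integral. Translating back, $|v-c|^2 e^{-2|x|^{1+b-a}/(1+b-a)}|x|^{-(2N+a-3b-3)}$ equals $|u-c|x|^{2b+2-N}e^{-\cdots}|^2|x|^{-(1+b+a)}$, because $2(2b+2-N)+(2N+a-3b-3)\cdot(-1)\cdot(-1)$ exponent bookkeeping gives $-(2N+a-3b-3)-2(2b+2-N)=-(1+a+b)$. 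This yields the first inequality.

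For $\delta_2$, I rescale Theorem~\ref{lemma 1.4} via $x\mapsto \lambda x$. This turns the Gaussian factor into $e^{-2|x|^{1+b-a}/((1+b-a)\lambda^{1+b-a})}$ with a multiplier $\lambda^{1+b-a}$ on the gradient side. I then use the third (product) identity of Theorem~\ref{L2CKNI}, with the same $A$ and the $\lambda$-scaled version of the field $\mathbf{X}$: the $|x|^{b-a}$ part is multiplied by $\lambda^{-(1+b-a)}$ and the $(N-2b-2)/|x|$ part is left unchanged. I choose $\lambda$ so that the two factors inside the square balance, i.e.\ the optimal $\lambda$ making $\lambda^{b+1-a}=\big(\int|\nabla u|^2|x|^{-2b}/\int|u|^2|x|^{-2a}\big)^{1/2}$. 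Exactly as in the previous theorem, this writes $\delta_2(u)$ as $\frac12\lambda^{b+1-a}$ times the weighted gradient energy of $v=u|x|^{N-2b-2}e^{|x|^{b+1-a}/((b+1-a)\lambda^{b+1-a})}$. Applying the scaled Poincar\'e inequality and then taking the infimum over $\lambda$ gives the bound with constant $C_{4,PI}/2$.

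The main obstacle is this last step. The product identity in Theorem~\ref{L2CKNI} is stated for a field of the form $\mathbf{X}$, but the extra non-scaling term $(N-2b-2)|x|^{-1}$ complicates it. I would first try the direct expansion: write $\|\nabla u\|\,\|u|x|^{-a}\|$ via AM--GM with the optimal $\lambda$, and verify that the Hardy-type cross terms produce exactly $(3b-a+3-N)/2\int |u|^2|x|^{-a-b-1}$ independently of $\lambda$. The required density is in $C_0^\infty(\mathbb{R}^N\setminus\{0\})$, which legitimizes all integrations by parts.
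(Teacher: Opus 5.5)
Your proposal is correct up to two sign slips, and it follows the paper's proof: the same $A=|x|^{-2b}$ and $\mathbf X$, the substitution $u=v\,|x|^{2b+2-N}e^{-|x|^{b+1-a}/(b+1-a)}$ turning $\delta_1$ into the energy of Theorem~\ref{lemma 1.4}, and the rescaled inequality for $\delta_2$, where your direct expansion is the right way to justify a step the paper only asserts (the $|x|^{-2b-2}$ terms cancel for every $\lambda$, and after dividing by $2\lambda^{-(b+1-a)}$ the coefficient of $\int|u|^2|x|^{-a-b-1}dx$ is independent of $\lambda$). The slips are these: since your $\mathbf X=\nabla\log\bigl(|x|^{2b+2-N}e^{-|x|^{b+1-a}/(b+1-a)}\bigr)$, the remainder is $\int|x|^{-2b}|\nabla u-\mathbf X u|^2\,dx$, not $|\nabla u+\mathbf X u|^2$ (the ``$+$'' in the displayed consequence of Theorem~\ref{L2CKNI} is a typo, as the first identity with $\alpha=1$ shows); and the balancing choice is $\lambda^{b+1-a}=\bigl(\int|u|^2|x|^{-2a}dx\big/\int|\nabla u|^2|x|^{-2b}dx\bigr)^{1/2}$, not its reciprocal.
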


\begin{proof}
This time we choose $A = \frac{1}{|x|^{2b}}$ and $\mathbf{X} = \left( -|x|^{b-a} - (N-2b-2)\frac{1}{|x|} \right) \frac{x}{|x|}$ in Theorem~\ref{L2CKNI}, and use Theorem \ref{lemma 1.4} to get
\begin{align*}
&  \int_{\mathbb{R}^{N}}\dfrac{|\nabla u|^{2}}{|x|^{2b}}dx+\int_{\mathbb{R}%
^{N}}\dfrac{|u|^{2}}{|x|^{2a}}dx-\left(  3b-a+3-N\right)  \int_{\mathbb{R}%
^{N}}\dfrac{|u|^{2}}{|x|^{a+b+1}}dx\\
&  =\int_{\mathbb{R}^{N}}\dfrac{1}{|x|^{2N-2b-4}}\left\vert \nabla\left(
u|x|^{N-2b-2}e^{\frac{|x|^{b+1-a}}{b+1-a}}\right)  \right\vert ^{2}%
e^{-\frac{2|x|^{b+1-a}}{b+1-a}}dx\\
&\geq  C_{4,PI}(a,b,N)\inf_{c \in \mathbb{R}}\int_{\mathbb{R}^N}\left|u-ce^{-\frac{|x|^{b+1-a}}{b+1-a}}|x|^{2b+2-N}\right|^2\dfrac{dx}{|x|^{1+b+a}}.
\end{align*}
For the second one, by applying a standard scaling argument to Theorem \ref{lemma 1.4}, for any $\lambda>0$, we have
$$\lambda^{1+b-a}\int_{\mathbb{R}^N} |\nabla v|^2 \frac{e^{-\frac{2|x|^{1+b-a}}{(1+b-a)\lambda^{1+b-a}}}}{|x|^{2N-2b-4}} \, dx \geq  C_{4,PI}(a,b,N) \inf_{c \in \mathbb{R}} \int_{\mathbb{R}^N} |v-c|^2 \frac{e^{-\frac{2|x|^{1+b-a}}{(1+b-a)\lambda^{1+b-a}}}}{|x|^{2N+a-3b-3}}  \, dx.$$
Then, with $A = \frac{1}{|x|^{2b}}$ and $\mathbf{X} = \left( -|x|^{b-a} - (N-2b-2)\frac{1}{|x|} \right) \frac{x}{|x|}$ in Theorem~\ref{L2CKNI}, we have
\begin{align*}
&  \left(  \int_{\mathbb{R}^{N}}\dfrac{|\nabla u|^{2}}{|x|^{2b}}dx\right)
^{\frac{1}{2}}\left(  \int_{\mathbb{R}^{N}}\dfrac{|u|^{2}}{|x|^{2a}}dx\right)
^{\frac{1}{2}}-\left(\dfrac{3b-a+3-N}{2}\right) \left(
\int_{\mathbb{R}^{N}}\dfrac{|u|^{2}}{|x|^{a+b+1}}dx\right) \nonumber\\
&  =\frac{1}{2}\lambda^{b-a+1}\int_{\mathbb{R}^{N}}\dfrac{1}{|x|^{2N-2b-4}%
}\left\vert \nabla\left(  u\left\vert x\right\vert ^{N-2b-2}e^{\frac
{|x|^{b+1-a}}{\left(  b+1-a\right)  \lambda^{b-a+1}}}\right)  \right\vert
^{2}e^{-\frac{2|x|^{b+1-a}}{\left(  b+1-a\right)  \lambda^{b-a+1}}}dx\\
&\geq \frac{ C_{4,PI}(a,b,N)}{2}\inf_{c \in \mathbb{R}}\displaystyle\int_{\mathbb{R}^N}\dfrac{\left|u-ce^{-\frac{|x|^{b+1-a}}{(b+1-a)\lambda^{b-a+1}}}|x|^{2b+2-N}\right|^2}{|x|^{1+b+a}}dx\\
&\geq \frac{ C_{4,PI}(a,b,N)}{2}\inf_{c \in \mathbb{R}, \lambda>0}\displaystyle\int_{\mathbb{R}^N}\dfrac{\left|u-ce^{-\frac{\lambda|x|^{b+1-a}}{(b+1-a)}}|x|^{2b+2-N}\right|^2}{|x|^{1+b+a}}dx.
\end{align*}
\end{proof}
\section{Weighted $L^p$-Poincar\'{e} inequalities with Gaussian type measures}
\begin{theorem}\label{LpCase1Thm}
     Assume $1 + b - a > 0$ and $b \leq \frac{N-p}{p}$ with $p >1$. There exists a constant \(C_{PI}(a,b,p,N) > 0\) such that
    \begin{equation}
        \int_{\mathbb{R}^N} |\nabla v|^p \frac{e^{-\frac{p|x|^{1+b-a}}{1+b-a}}}{|x|^{pb}} \, dx \geq C_{PI}(a, b, p, N) \inf_{c \in \mathbb{R}} \int_{\mathbb{R}^N} |v-c|^p \frac{e^{-\frac{p|x|^{1+b-a}}{1+b-a}}}{|x|^{1+b+(p-1)a}}  \, dx.
    \end{equation}
\end{theorem}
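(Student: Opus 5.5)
Set $\beta := 1+b-a > 0$, and write
\[
d\mu = |x|^{-pb} e^{-\frac{p|x|^{\beta}}{\beta}}\,dx, \qquad d\nu = |x|^{-(1+b+(p-1)a)} e^{-\frac{p|x|^{\beta}}{\beta}}\,dx .
\]
Both measures are finite: near the origin the exponents satisfy $pb < N$ and $1+b+(p-1)a < N$, because $(p-1)a < (p-1)(1+b)$ gives $1+b+(p-1)a < p(1+b) \le N$. Since there is no spectral decomposition in $L^p$, the plan is to reduce to a one-dimensional problem along rays and treat the angular part separately. We will use two elementary facts about $\inf_c\|v-c\|_{L^p(\nu)}$: it is comparable (up to a factor $2$) to $\|v-c_0\|_{L^p(\nu)}$ for any reasonable choice of $c_0$, for instance a $\nu$-median; and it is bounded by $\|v-g\|_{L^p(\nu)}+\inf_c\|g-c\|_{L^p(\nu)}$ for any intermediate function $g$.

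\textbf{Step 1 (radial inequality).} Write $v=v(r\sigma)$ and work with the spherical mean $\bar v(r)=\frac{1}{\omega_{N-1}}\int_{\mathbb{S}^{N-1}}v(r\sigma)\,d\sigma$. For this one-dimensional profile we need
\[
\int_0^\infty |\bar v'|^p\, r^{N-1-pb}e^{-pr^\beta/\beta}\,dr \;\gtrsim\; \inf_c\int_0^\infty|\bar v-c|^p\, r^{N-2-b-(p-1)a}e^{-pr^\beta/\beta}\,dr .
\]
This is a weighted Hardy-type inequality for functions anchored at a point $r_0$, and we verify it with Muckenhoupt's criterion on $(0,r_0)$ and on $(r_0,\infty)$. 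Near $0$ the weights are powers. The exponent condition is where $b\le (N-p)/p$ enters: it gives $N-pb \ge p$, so $r^{N-1-pb}$ is integrable in the right dual sense. Near $\infty$ the Gaussian-type tail behaves like $\int_r^\infty s^{\gamma}e^{-ps^\beta/\beta}\,ds \sim r^{\gamma+1-\beta}e^{-pr^\beta/\beta}$, and the Muckenhoupt constant is finite exactly because the weight exponents differ by $p\beta - p$ in the right way. Concretely, $(N-1-pb)-(N-2-b-(p-1)a) = 1-(p-1)(b-a)$, which matches the loss in the Laplace asymptotic. An alternative route is the substitution $t=r^\beta$. This produces a measure $t^{\kappa}e^{-pt/\beta}\,dt$ on $(0,\infty)$, which is log-concave when $\kappa\ge0$ and for which an $L^p$-Poincaré inequality is standard. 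The mismatch in powers of $t$ between the two sides is then handled by the same Hardy argument near $0$. I would use the Muckenhoupt route because it handles all exponents at once.

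\textbf{Step 2 (angular part).} Bound $\int|v-\bar v|^p\,d\nu$ sphere by sphere using the $L^p$ Poincaré inequality on $\mathbb{S}^{N-1}$:
\[
\int_{\mathbb{S}^{N-1}}|v(r\sigma)-\bar v(r)|^p\,d\sigma \le C_S\, r^p\int_{\mathbb{S}^{N-1}}|\nabla v(r\sigma)|^p\,d\sigma .
\]
This gives $\int|v-\bar v|^p\,d\nu \lesssim \int |\nabla v|^p\, r^{p}\, r^{-(1+b+(p-1)a)}e^{-pr^\beta/\beta}\,dx$. Comparing with $d\mu$, we need $r^{p-1-b-(p-1)a}\lesssim r^{-pb}$, that is $r^{(p-1)(1+b-a)}\lesssim 1$, which holds only on a bounded set. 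So this step works on $\{|x|\le R\}$. For $|x|>R$ we use the large-$r$ Gaussian tail instead: a weighted Hardy/Poincaré inequality at infinity controls $\int_{|x|>R}|v-\bar v|^p\,d\nu$ through the radial derivative of $v-\bar v$, gaining the factor $r^{-(p-1)\beta}$. Adding Steps 1 and 2 with $g=\bar v$ yields the theorem. Finally, $|\bar v'|^p \le$ the spherical mean of $|\partial_r v|^p$ by Jensen, which lets us pass from the one-dimensional estimate back to $\int|\nabla v|^p\,d\mu$.

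\textbf{Main obstacle.} The delicate point is Step 2 at large $r$, where the spherical Poincaré inequality loses the factor $r^{(p-1)\beta}$. I would handle it by applying the Step 1 Hardy inequality on $(R,\infty)$ along each ray to $v(r\sigma)-\bar v(r)$. Its mean over each sphere vanishes, but the one-dimensional inequality on a half-line anchored at $R$ needs its boundary value at $R$, and we control that value by the bounded-region estimate through a trace bound. Once this bookkeeping is done, every constant is finite, and this gives existence of $C_{PI}(a,b,p,N)>0$ rather than its sharp value, which matches what the theorem asserts.
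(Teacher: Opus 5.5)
Your argument is correct in outline, but it differs from the paper's in both halves. Your Muckenhoupt route checks out, with one misplaced remark noted below.

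\textbf{Radial part.} The paper substitutes $\frac{r^{1+b-a}}{1+b-a}=s^{p/(p-1)}$. Unlike your $t=r^{\beta}$, this turns \emph{both} weights into the same density $s^{A-1}e^{-ps^{p/(p-1)}}$, with $A=\frac{p(N-1-b-(p-1)a)}{(p-1)(1+b-a)}$. The hypothesis $b\le\frac{N-p}{p}$ gives $A\ge1$, so the density is log-concave and the $L^p$ Poincar\'e inequality for log-concave measures finishes the radial case at once.

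Your Muckenhoupt route avoids log-concavity. At infinity the Muckenhoupt quantity decays like $r^{-(1+b-a)/p}$. Near $0$ (Hardy inequality anchored at $r_0$) it is finite as soon as $\nu$ is finite at the origin. So your claim that $N-pb\ge p$ is what makes the dual weight integrable is misplaced: for $N-pb\ge p$ the dual weight $r^{-(N-1-pb)/(p-1)}$ is \emph{not} integrable at $0$, and this is harmless. In your argument the hypothesis only serves to make $\nu$ finite near the origin.

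\textbf{Non-radial part.} The paper decouples the energy. For $p\ge2$ it uses $|\nabla v|^p\ge|\partial_r v|^p+r^{-p}|\nabla_{\mathbb{S}^{N-1}}v|^p$ and a Clarkson-type inequality; the cross term vanishes because $w=v-\bar v$ has zero spherical means. It then proves ray by ray that $\int(|\partial_r w|^p+|w|^p|x|^{-p})\,d\mu\gtrsim\int|w|^p\,d\nu$, controlling the free Poincar\'e constant $c$ by the near-origin term. For $1<p<2$ it runs a separate integration-by-parts Hardy estimate.

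You instead use the triangle inequality $\inf_c\|v-c\|_{L^p(\nu)}\le\|v-\bar v\|_{L^p(\nu)}+\inf_c\|\bar v-c\|_{L^p(\nu)}$, together with Jensen and $|\partial_r w|\le|\partial_r v|+|\bar v'|$. This needs no decoupling and is uniform in $p>1$. Your anchor value $w(R\sigma)$ plays the role of the paper's free constant, and the trace bound is routine: average $|w(R\sigma)|\le|w(r\sigma)|+\int_{R/2}^{R}|\partial_r w(s\sigma)|\,ds$ over $r\in[R/2,R]$, integrate in $\sigma$, and use spherical Poincar\'e on the annulus.

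\textbf{What each buys.} The paper's decoupled form mirrors its $L^2$ spherical-harmonic proof. Yours is shorter for a pure existence statement.

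Both arguments use the Poincar\'e inequality on $\mathbb{S}^{N-1}$ and therefore need $N\ge2$. For $N=1$ and $b<\frac{1-p}{p}$ the statement actually fails, as smoothed odd step functions show.
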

\begin{proof}
    {\bf Step 1.} First, we consider the radial case. It suffices to show that
    $$\int_0^\infty |v'|^pe^{-\frac{pr^{1+b-a}}{1+b-a}}r^{N-1-pb}dr \geq C_{\text{radPI}}(a, b, p, N)\inf_{c \in \mathbb{R}}\int_0^\infty |v-c|^p e^{-\frac{pr^{1+b-a}}{1+b-a}} r^{N-1-(p-1)a-b-1}dr.$$ 
    To do this, setting $\dfrac{r^{1+b-a}}{1+b-a}=s^{\frac{p}{p-1}}$, then $r=(1+b-a)^{\frac{1}{1+b-a}}s^{\frac{p}{(p-1)(1+b-a)}}$ and
    $$dr=\frac{p(1+b-a)^{\frac{1}{1+b-a}}}{(p-1)(1+b-a)}s^{\frac{p}{(p-1)(1+b-a)}-1}ds.$$
    Hence,
    \begin{align*}
        &\int_0^\infty |v'|^pe^{-\frac{pr^{1+b-a}}{1+b-a}}r^{N-1-pb}dr\\
        &=\dfrac{p}{p-1}\int_0^\infty \left|v'\left((1+b-a)^{\frac{1}{1+b-a}}s^{\frac{p}{(p-1)(1+b-a)}}\right)\right|^pe^{-ps^{p/(p-1)}}(1+b-a)^{\frac{N+a-(p+1)b-1}{1+b-a}}s^{\frac{pN-p^2b}{(p-1)(1+b-a)}-1}ds
    \end{align*}
    Putting $w(s)=v\left((1+b-a)^{\frac{1}{1+b-a}}s^{\frac{p}{(p-1)(1+b-a)}}\right)$, then
    $$w'(s)=(1+b-a)^{\frac{1}{1+b-a}-1}\frac{p}{(p-1)}s^{\frac{p}{(p-1)(1+b-a)}-1}v'\left((1+b-a)^{\frac{1}{1+b-a}}s^{\frac{p}{(p-1)(1+b-a)}}\right),$$
    which implies that
    \begin{align*}
        &\int_0^\infty |v'|^pe^{-\frac{pr^{1+b-a}}{1+b-a}}r^{N-1-pb}dr\\
        &=\left(\dfrac{p-1}{p}\right)^{p-1}(1+b-a)^{\frac{N-(p-1)a-b-1}{1+b-a}}\int_0^\infty \left|w'(s)\right|^pe^{-ps^{p/(p-1)}}s^{\frac{p(N+a-b-1)-ap^2}{(p-1)(1+b-a)}-1}ds.
    \end{align*}
    Moreover,
    \begin{align*}
        &\int_0^\infty |v-c|^p e^{-\frac{pr^{1+b-a}}{1+b-a}} r^{N-1-(p-1)a-b-1}dr\\
        &=\dfrac{p}{p-1}(1+b-a)^{\frac{N-(p-2)a-2b-2}{1+b-a}}\int_0^\infty |w(s)-c|^p e^{-ps^{p/(p-1)}}s^{\frac{p(N+a-b-1)-p^2a}{(p-1)(1+b-a)}-1}ds.
    \end{align*}
    Let us consider $$f(s)=e^{-ps^{p/(p-1)}}s^{\frac{p(N+a-b-1)-p^2a}{(p-1)(1+b-a)}-1}=\exp{\left((A-1)\ln(s)-ps^{p/(p-1)}\right)},$$ where $A=\frac{p(N+a-b-1)-p^2a}{(p-1)(1+b-a)}$. Since $1+b-a>0$ and $b \leq \frac{N-p}{p}$,  we have $A \geq 1$ and the function 
    $$(A-1)\ln(s)-ps^{p/(p-1)}$$
    is concave. Using the Poincar\'{e} inequality with log-concave measure, we get
    \begin{align*}
        &\int_0^\infty |v'|^pe^{-\frac{pr^{1+b-a}}{1+b-a}}r^{N-1-pb}dr\\
        &=\left(\dfrac{p-1}{p}\right)^{p-1}(1+b-a)^{\frac{N-(p-1)a-b-1}{1+b-a}}\int_0^\infty \left|w'(s)\right|^pe^{-ps^{p/(p-1)}}s^{\frac{p(N+a-b-1)-ap^2}{(p-1)(1+b-a)}-1}ds\\
        &\geq \left(\dfrac{p-1}{p}\right)^{p-1}(1+b-a)^{\frac{N-(p-1)a-b-1}{1+b-a}} C_{lcP}\inf_{c \in \mathbb{R}} \int_0^\infty |w(s)-c|^p e^{-ps^{p/(p-1)}}s^{\frac{p(N+a-b-1)-ap^2}{(p-1)(1+b-a)}-1}ds\\
        &= (1+b-a)\left(\dfrac{p-1}{p}\right)^{p} C_{lcP} \inf_{c \in \mathbb{R}}\int_0^\infty |v-c|^p e^{-\frac{pr^{1+b-a}}{1+b-a}} r^{N-1-(p-1)a-b-1}dr,
    \end{align*}
    as desired.

    {\bf Step 2.} For a general function $v$, WLOG, we can assume that $v$ can be expressed in the form of $v=v_0+w$ where $v_0$ is radial and $w$ is orthogonal to all radial functions and $\int_{\mathbb{S}^{N-1}}wd\sigma=0$. More clearly, we can choose $v_0(r)=\frac{1}{|\mathbb{S}^{N-1}|}\int_{\mathbb{S}^{N-1}}v(r\sigma)d\sigma$. Here, we need to split into two ranges of $p$, $p \geq 2$ and $1<p<2$. 
    
    {\bf Step 3.} For $p \geq 2$, using \cite[Theorem $4.2$]{L90}, we obtain
    \begin{align*}
        |\nabla v|^p=\left(|\partial_r v|^2+\frac{1}{r^2}|\nabla_{\mathbb{S}^{N-1}}v|^2\right)^{p/2}&\geq |\partial_rv|^p+\dfrac{1}{r^p}|\nabla_{\mathbb{S}^{N-1}}v|^p\\
        &=|\partial_r(v_0+w)|^p+\dfrac{1}{r^p}|\nabla_{\mathbb{S}^{N-1}}w|^p\\
        &\geq |\partial_r v_0|^p+p|\partial_r v_0|^{p-2}\partial_rv_0\partial_r w+C_p|\partial_r w|^p+\dfrac{1}{r^p}|\nabla_{\mathbb{S}^{N-1}}w|^p,
    \end{align*}
    where $C_p=\frac{1}{2^{p-1}-1}.$
    Since
    \begin{align*}
        \int_{\mathbb{R}^N}|\partial_r v_0|^{p-2}\partial_rv_0\partial_r w \frac{e^{-\frac{p|x|^{1+b-a}}{1+b-a}}}{|x|^{pb}} dx&=\int_{\mathbb{S}^{N-1}}\int_0^\infty  |\partial_r v_0|^{p-2}\partial_rv_0\partial_r w e^{\frac{-pr^{1+b-a}}{1+b-a}}r^{N-1-pb}drd\sigma\\
        &=-\int_{\mathbb{S}^{N-1}}\int_0^\infty  w \partial_r \left[|\partial_r v_0|^{p-2}\partial_rv_0 e^{\frac{-pr^{1+b-a}}{1+b-a}}r^{N-1-pb}\right]drd\sigma\\
        &=0
    \end{align*}
    and, by the $L^p$-Poincar\'{e} inequality on sphere
    \begin{align*}
        \int_{\mathbb{R}^N} \dfrac{1}{|x|^p}|\nabla_{\mathbb{S}^{N-1}}w|^p \frac{e^{-\frac{p|x|^{1+b-a}}{1+b-a}}}{|x|^{pb}} \, dx&=\int_0^\infty \left(\int_{\mathbb{S}^{N-1}}|\nabla_{\mathbb{S}^{N-1}}w|^pd\sigma\right) r^{N-1-p-pb}e^{-\frac{pr^{1+b-a}}{1+b-a}}dr\\
        &\geq C_{Psph}\int_0^\infty \left(\int_{\mathbb{S}^{N-1}}|w|^pd\sigma\right) r^{N-1-p-pb}e^{-\frac{pr^{1+b-a}}{1+b-a}}dr\\
        &=C_{Psph}\int_{\mathbb{R}^N} \dfrac{1}{|x|^p}|w|^p \frac{e^{-\frac{p|x|^{1+b-a}}{1+b-a}}}{|x|^{pb}} \, dx,
    \end{align*}
    we get
    \begin{align*}
        \int_{\mathbb{R}^N} |\nabla v|^p \frac{e^{-\frac{p|x|^{1+b-a}}{1+b-a}}}{|x|^{pb}} \, dx &\geq \int_{\mathbb{R}^N} \left(|\partial_r v_0|^p+C_p|\partial_r w|^p\right) \frac{e^{-\frac{p|x|^{1+b-a}}{1+b-a}}}{|x|^{pb}} \, dx\\
        &+ C_{Psph}\int_{\mathbb{R}^N} \dfrac{1}{|x|^p}|w|^p \frac{e^{-\frac{p|x|^{1+b-a}}{1+b-a}}}{|x|^{pb}} \, dx\\
        &\geq \int_{\mathbb{R}^N} |\partial_r v_0|^p \frac{e^{-\frac{p|x|^{1+b-a}}{1+b-a}}}{|x|^{pb}} \, dx\\
        &+\min\{C_p, C_{Psph}\}\int_{\mathbb{R}^N}\left(|\partial_r w|^p+\frac{|w|^p}{|x|^p}\right)\frac{e^{-\frac{p|x|^{1+b-a}}{1+b-a}}}{|x|^{pb}} \, dx\\
        & \geq C_{radPI}(a,b,p,N)\inf_{c \in \mathbb{R}} \int_{\mathbb{R}^N} |v_0-c|^p \frac{e^{-\frac{p|x|^{1+b-a}}{1+b-a}}}{|x|^{1+b+(p-1)a}}  \, dx\\
        &+\min\{C_p, C_{Psph}\}\int_{\mathbb{R}^N}\left(|\partial_r w|^p+\frac{|w|^p}{|x|^p}\right)\frac{e^{-\frac{p|x|^{1+b-a}}{1+b-a}}}{|x|^{pb}} \, dx.
    \end{align*}
    
    {\bf Step 4.} Then, using the elementary inequality 
    $$a^p+b^p \geq \frac{(a+b)^p}{2^{p-1}}$$
    for $p\geq 1$, it is enough to prove the following lemma:
    \begin{lemma}
        $$\int_{\mathbb{R}^N}\left(|\partial_r w|^p+\frac{|w|^p}{|x|^p}\right)\frac{e^{-\frac{p|x|^{1+b-a}}{1+b-a}}}{|x|^{pb}} \, dx\geq C_1 \int_{\mathbb{R}^N} |w|^p \frac{e^{-\frac{p|x|^{1+b-a}}{1+b-a}}}{|x|^{1+b+(p-1)a}}  \, dx.$$
    \end{lemma}
    To do this, we also consider two cases: the radial case and general case. First, if $w$ is radial, we will use a contradiction argument. For the sake of contradiction, we can assume that there exists a sequence of radial functions $(w_k)_{k=1}^\infty$ such that
    $$\int_{\mathbb{R}^N} |w_k|^p \frac{e^{-\frac{p|x|^{1+b-a}}{1+b-a}}}{|x|^{1+b+(p-1)a}}  \, dx=1$$
    and 
    $$\lim_{k \rightarrow \infty} \int_{\mathbb{R}^N}\left(|\partial_r w_k|^p+\frac{|w_k|^p}{|x|^p}\right)\frac{e^{-\frac{p|x|^{1+b-a}}{1+b-a}}}{|x|^{pb}} \, dx =0.$$
    Then,
    $$\lim_{k \rightarrow \infty} \int_{\mathbb{R}^N}|\partial_r w_k|^p\frac{e^{-\frac{p|x|^{1+b-a}}{1+b-a}}}{|x|^{pb}} \, dx =0.$$
    and 
    $$\lim_{k \rightarrow \infty} \int_{\mathbb{R}^N}\frac{|w_k|^p}{|x|^p}\frac{e^{-\frac{p|x|^{1+b-a}}{1+b-a}}}{|x|^{pb}} \, dx =0.$$
    
    Moreover, from the radial case of the Poincar\'{e} inequality, we can choose a sequence $(c_k)_{k=1}^\infty$ satisfying
    $$\int_{\mathbb{R}^N}|\partial_r w_k|^p\frac{e^{-\frac{p|x|^{1+b-a}}{1+b-a}}}{|x|^{pb}} \, dx \geq \frac{1}{2}C_{radPI}(a,b,p,N)\int_{\mathbb{R}^N} |w_k-c_k|^p \frac{e^{-\frac{p|x|^{1+b-a}}{1+b-a}}}{|x|^{1+b+(p-1)a}}  \, dx,$$
    which implies that
    $$\lim_{k \rightarrow \infty} \int_{\mathbb{R}^N} |w_k-c_k|^p \frac{e^{-\frac{p|x|^{1+b-a}}{1+b-a}}}{|x|^{1+b+(p-1)a}}  \, dx=0.$$
    Therefore, up to a subsequence, we can derive that $c_k \rightarrow 0$ as $k \rightarrow 0$. From $(p-1)a+b+1<(b+1)p$ for any $p>1$ and $b+1-a>0$, it follows that
    \begin{align*}
        1&=\int_{\mathbb{R}^N} |w_k|^p \frac{e^{-\frac{p|x|^{1+b-a}}{1+b-a}}}{|x|^{1+b+(p-1)a}}  \, dx\\
        &=\int_{B_R} |w_k|^p \frac{e^{-\frac{p|x|^{1+b-a}}{1+b-a}}}{|x|^{1+b+(p-1)a}}  \, dx+\int_{B_R^c} |w_k|^p \frac{e^{-\frac{p|x|^{1+b-a}}{1+b-a}}}{|x|^{1+b+(p-1)a}}  \, dx\\
        &\leq \int_{B_R} \frac{|w_k|^p}{|x|^{(b+1)p}} \frac{e^{-\frac{p|x|^{1+b-a}}{1+b-a}}|x|^{(b+1)p}}{|x|^{1+b+(p-1)a}}  \, dx\\
        &+2^{p-1}\int_{B_R^c} |w_k-c_k|^p \frac{e^{-\frac{p|x|^{1+b-a}}{1+b-a}}}{|x|^{1+b+(p-1)a}}  \, dx+2^{p-1}c_k^p\int_{B_R^c} \frac{e^{-\frac{p|x|^{1+b-a}}{1+b-a}}}{|x|^{1+b+(p-1)a}}  \, dx\\
        &\leq R^{(b+1)p-(p-1)a-b-1}\int_{B_R} \frac{|w_k|^p}{|x|^{(b+1)p}} e^{-\frac{p|x|^{1+b-a}}{1+b-a}}  \, dx\\
        &+2^{p-1}\int_{B_R^c} |w_k-c_k|^p \frac{e^{-\frac{p|x|^{1+b-a}}{1+b-a}}}{|x|^{1+b+(p-1)a}}  \, dx+2^{p-1}c_k^p\int_{B_R^c} \frac{e^{-\frac{p|x|^{1+b-a}}{1+b-a}}}{|x|^{1+b+(p-1)a}}  \, dx,
    \end{align*}
    which tends to $0$ as $k \rightarrow \infty$, where $B_R$ is the Euclidean ball centered at the origin with radius $R>0$. This is a contradiction.
    
    Next, for a general function $w$, fixing $\sigma \in \mathbb{S}^{N-1}$ and we define a radial function $v(x)=w(|x|\sigma)$. Then
    $$|\nabla v(x)|^p=|v'(r)|^p=|\nabla w(|x|\sigma)\cdot \sigma|^p=|\partial_r w(r\sigma)|^p.$$ Applying the radial case for the function $v$, we have
    \begin{align*}
        \int_0^\infty\left(|v'(r)|^p+\frac{|v|^p}{r^p}\right)\frac{e^{-\frac{pr^{1+b-a}}{1+b-a}}}{r^{pb}} \,r^{N-1} dr&=\int_0^\infty\left(|\nabla w(|x|\sigma)\cdot \sigma|^p+\frac{|v|^p}{r^p}\right)\frac{e^{-\frac{pr^{1+b-a}}{1+b-a}}}{r^{pb}} \,r^{N-1} dr\\
        &\geq C_1 \int_0^\infty |v|^p \frac{e^{-\frac{pr^{1+b-a}}{1+b-a}}}{r^{1+b+(p-1)a}} r^{N-1} dr.
    \end{align*}
    Integrating over the sphere $\mathbb{S}^{N-1}$ allows us
    \begin{align*}
        \int_{\mathbb{R}^N}\left(|\partial_r w|^p+\frac{|w|^p}{|x|^p}\right)\frac{e^{-\frac{p|x|^{1+b-a}}{1+b-a}}}{|x|^{pb}} \, dx&=\int_{\mathbb{S}^{N-1}}\int_0^\infty\left(|\nabla w(|x|\sigma)\cdot \sigma|^p+\frac{|v|^p}{r^p}\right)\frac{e^{-\frac{pr^{1+b-a}}{1+b-a}}}{r^{pb}} \,r^{N-1} drd\sigma\\
        &\geq C_1 \int_{\mathbb{S}^{N-1}}\int_0^\infty |v|^p \frac{e^{-\frac{pr^{1+b-a}}{1+b-a}}}{r^{1+b+(p-1)a}} r^{N-1} drd\sigma\\
        &=C_1 \int_{\mathbb{R}^N} |w|^p \frac{e^{-\frac{p|x|^{1+b-a}}{1+b-a}}}{|x|^{1+b+(p-1)a}}  \, dx,
    \end{align*}
    as desired. Thus, we complete the proof of Theorem \ref{LpCase1Thm} for $p \geq 2$ with
    $$\overline{C}_PI(a, b, p, N)=\dfrac{1}{2^{p-1}}\min\{C_{radPI}(a, b, p, N), \min\{C_p, C_{Psph}\}C_1\}.$$

    {\bf Step 5.} For $1<p<2$, similarly,
    \begin{align*}
        \int_{\mathbb{R}^N} \dfrac{1}{|x|^p}|\nabla_{\mathbb{S}^{N-1}}w|^p \frac{e^{-\frac{p|x|^{1+b-a}}{1+b-a}}}{|x|^{pb}} \, dx\geq C_{Psph}\int_{\mathbb{R}^N} \dfrac{1}{|x|^p}|w|^p \frac{e^{-\frac{p|x|^{1+b-a}}{1+b-a}}}{|x|^{pb}} \, dx,
    \end{align*}
    and
    \begin{align*}
        \int_{\mathbb{R}^N}|\partial_r v_0|^{p-2}\partial_rv_0\partial_r w \frac{e^{-\frac{p|x|^{1+b-a}}{1+b-a}}}{|x|^{pb}} dx=0.
    \end{align*}
    Then, from $|a+b|^p \geq |a|^p+p|a|^{p-2}ab$ with $p>1$,
    $$\int_{\mathbb{R}^N}|\partial_r v|^{p}\frac{e^{-\frac{p|x|^{1+b-a}}{1+b-a}}}{|x|^{pb}} dx \geq \int_{\mathbb{R}^N}|\partial_r v_0|^{p}\frac{e^{-\frac{p|x|^{1+b-a}}{1+b-a}}}{|x|^{pb}} dx,$$
    which implies that
    \begin{align*}
        \int_{\mathbb{R}^N}|\partial_r w|^{p}\frac{e^{-\frac{p|x|^{1+b-a}}{1+b-a}}}{|x|^{pb}} dx &\leq 2^{p-1}\left(\int_{\mathbb{R}^N}|\partial_r v|^{p}\frac{e^{-\frac{p|x|^{1+b-a}}{1+b-a}}}{|x|^{pb}} dx+\int_{\mathbb{R}^N}|\partial_r v_0|^{p}\frac{e^{-\frac{p|x|^{1+b-a}}{1+b-a}}}{|x|^{pb}} dx\right)\\
        &\leq 2^p \int_{\mathbb{R}^N}|\partial_r v|^{p}\frac{e^{-\frac{p|x|^{1+b-a}}{1+b-a}}}{|x|^{pb}} dx.
    \end{align*}
    Thus,
    \begin{align*}
        &\int_{\mathbb{R}^N} |\nabla v|^p \frac{e^{-\frac{p|x|^{1+b-a}}{1+b-a}}}{|x|^{pb}} \, dx \\
        &\geq 2^{p/2-1}\int_{\mathbb{R}^N} \left(|\partial_r v|^p+\dfrac{1}{|x|^p}\left|\nabla_{\mathbb{S}^{N-1}w}\right|^p\right) \frac{e^{-\frac{p|x|^{1+b-a}}{1+b-a}}}{|x|^{pb}} \, dx\\ 
        &\geq 2^{p/2-2}\int_{\mathbb{R}^N} |\partial_r v_0|^p \frac{e^{-\frac{p|x|^{1+b-a}}{1+b-a}}}{|x|^{pb}} \, dx\\
        &+\dfrac{1}{2^{p/2+2}}\int_{\mathbb{R}^N} |\partial_r w|^p \frac{e^{-\frac{p|x|^{1+b-a}}{1+b-a}}}{|x|^{pb}} \, dx+2^{p/2-1}C_{Psph}\int_{\mathbb{R}^N} \dfrac{1}{|x|^p}|w|^p \frac{e^{-\frac{p|x|^{1+b-a}}{1+b-a}}}{|x|^{pb}} \, dx\\
        & \geq 2^{p/2-2}C_{radPI}(a,b,p,N)\inf_{c \in \mathbb{R}} \int_{\mathbb{R}^N} |v_0-c|^p \frac{e^{-\frac{p|x|^{1+b-a}}{1+b-a}}}{|x|^{1+b+(p-1)a}}  \, dx\\
        &+\min\left\{\dfrac{1}{2^{p/2+2}}, 2^{p/2-1}C_{Psph}\right\}\int_{\mathbb{R}^N}\left(|\partial_r w|^p+\frac{|w|^p}{|x|^p}\right)\frac{e^{-\frac{p|x|^{1+b-a}}{1+b-a}}}{|x|^{pb}} \, dx.
    \end{align*}

    {\bf Step 6.} Since $$(p-1)(b+1)+a<(p-1)a+b+1<p(b+1)$$ for all $1<p<2$ and $1+b-a>0$, it is enough to prove 
    \begin{equation}\label{eq 3.2}
        \int_{\mathbb{R}^N}\left(|\partial_r w|^p+\frac{|w|^p}{|x|^p}\right)\frac{e^{-\frac{p|x|^{1+b-a}}{1+b-a}}}{|x|^{pb}} \, dx \geq C_2 \int_{\mathbb{R}^N} |w|^pe^{-\frac{p|x|^{1+b-a}}{1+b-a}} |x|^{-(p-1)(b+1)-a}dx. 
    \end{equation}
    If so, then
    \begin{align*}
        &\int_{\mathbb{R}^N}\left(|\partial_r w|^p+\frac{|w|^p}{|x|^p}\right)\frac{e^{-\frac{p|x|^{1+b-a}}{1+b-a}}}{|x|^{pb}} \, dx \\&\geq \frac{\min\{C_2, 1\}}{2} \int_{\mathbb{R}^N} |w|^pe^{-\frac{p|x|^{1+b-a}}{1+b-a}} \left(|x|^{-(p-1)(b+1)-a}+|x|^{-p(b+1)}\right)dx\\
        &\geq C_3\frac{\min\{C_2, 1\}}{2} \int_{\mathbb{R}^N} |w|^pe^{-\frac{p|x|^{1+b-a}}{1+b-a}} |x|^{-(p-1)a-b-1}dx,
    \end{align*}
    where $|x|^{-(p-1)(b+1)-a}+|x|^{-p(b+1)} \geq C_3 |x|^{-(p-1)a-b-1}$ for all $x \neq 0$.
    As a result, 
    \begin{align*}
        &\int_{\mathbb{R}^N} |\nabla v|^p \frac{e^{-\frac{p|x|^{1+b-a}}{1+b-a}}}{|x|^{pb}} \, dx\\
        & \geq 2^{p/2-2}C_{radPI}(a,b,p,N)\inf_{c \in \mathbb{R}} \int_{\mathbb{R}^N} |v_0-c|^p \frac{e^{-\frac{p|x|^{1+b-a}}{1+b-a}}}{|x|^{1+b+(p-1)a}}  \, dx\\
        &+\frac{C_3}{2}\min\left\{\dfrac{1}{2^{p/2+2}}, 2^{p/2-1}C_{Psph}\right\}\min\{C_2, 1\} \int_{\mathbb{R}^N} |w|^pe^{-\frac{p|x|^{1+b-a}}{1+b-a}} |x|^{-(p-1)a-b-1}dx\\
        &\geq C_{PI}(a,b,p,N)\inf_{c \in \mathbb{R}} \int_{\mathbb{R}^N} |v-c|^p \frac{e^{-\frac{p|x|^{1+b-a}}{1+b-a}}}{|x|^{1+b+(p-1)a}}  \, dx.
    \end{align*}
    Here, with $1<p<2$, 
    $$\tilde{C}_PI(a,b,p,N)=\dfrac{1}{2^{p-1}}\min\left\{2^{p/2-2}C_{radPI}(a,b,p,N), \frac{C_3}{2}\min\left\{\dfrac{1}{2^{p/2+2}}, 2^{p/2-1}C_{Psph}\right\}\min\{C_2, 1\}\right\}.$$

    {\bf Step 7.} To prove \eqref{eq 3.2}, by using Young's inequality with $q=\frac{p}{p-1}$ and integrating by parts, we have
    \begin{align*}
    \int_{\mathbb{R}^N}\left(\dfrac{1}{p}|\partial_r w|^p+\dfrac{1}{q}\frac{|w|^p}{|x|^p}\right)\frac{e^{-\frac{p|x|^{1+b-a}}{1+b-a}}}{|x|^{pb}} \, dx&\geq \int_{\mathbb{R}^N}\dfrac{1}{p} \dfrac{\partial_r(|w|^p)}{|x|^{p-1}}\frac{e^{-\frac{p|x|^{1+b-a}}{1+b-a}}}{|x|^{pb}} \, dx\\
    &=\dfrac{1}{p}\int_{\mathbb{S}^{N-1}}\left(\int_0^\infty \partial_r(|w|^p)e^{-p\frac{r^{1+b-a}}{1+b-a}}r^{N-pb-p}dr\right)d\sigma\\
    &=\int_{\mathbb{S}^{N-1}}\int_0^\infty |w|^p e^{-p\frac{r^{1+b-a}}{1+b-a}}r^{N-pb-p+b-a}drd\sigma\\
    &-\dfrac{N-pb-p}{p}\int_{\mathbb{S}^{N-1}}\int_0^\infty |w|^p e^{-p\frac{r^{1+b-a}}{1+b-a}}r^{N-1-pb-p}drd\sigma\\
    &=\int_{\mathbb{R}^N}|w|^pe^{-\frac{p|x|^{1+b-a}}{1+b-a}}|x|^{b-a-pb-p+1}dx\\
    &-\dfrac{N-pb-p}{p}\int_{\mathbb{R}^N}|w|^p e^{-\frac{p|x|^{1+b-a}}{1+b-a}} |x|^{-pb-p}dx.
    \end{align*}
    Hence,
    \begin{align*}
    &\int_{\mathbb{R}^N}|\partial_r w|^p\frac{e^{-\frac{p|x|^{1+b-a}}{1+b-a}}}{|x|^{pb}} \, dx+\left(N-pb-p+\frac{p}{q}\right)\int_{\mathbb{R}^N}|w|^p e^{-\frac{p|x|^{1+b-a}}{1+b-a}} |x|^{-pb-p}dx\\
    &\geq p\int_{\mathbb{R}^N}|w|^pe^{-\frac{p|x|^{1+b-a}}{1+b-a}}|x|^{b-a-pb-p+1}dx,
    \end{align*}
    which proves \eqref{eq 3.2} with 
    $$C_2=\dfrac{p}{\max\{1, N-pb-p+p/q\}}.$$
    To sum up, we complete the proof for all $p>1$ with
    $$C_{PI}(a, b, p, N)=\min\{\overline{C}_PI(a, b, p, N), \tilde{C}_PI(a, b, p, N)\}.$$
\end{proof}

As a consequence of the above theorem, we get the following weighted $L^p$-Poincar\'{e} type inequalities:
\begin{theorem}\label{LpCase2Thm}
    Assume $1 + b - a < 0$ and $b \geq \frac{N-p}{p}$ with $p >1$, we have
    \begin{equation}
        \int_{\mathbb{R}^N} |\nabla v|^p \frac{e^{\frac{p|x|^{1+b-a}}{1+b-a}}}{|x|^{pb}} \, dx \geq C_{2,PI}(a,b,p, N) \inf_{c \in \mathbb{R}} \int_{\mathbb{R}^N} |v-c|^p \frac{e^{\frac{p|x|^{1+b-a}}{1+b-a}}}{|x|^{1+b+(p-1)a}}  \, dx,
    \end{equation}
    where $C_{2, PI}(a,b,p,N)=C_{PI}\left(\frac{2}{p}N-a, \frac{2}{p}N-2-b,p, N\right).$
\end{theorem}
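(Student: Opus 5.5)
The plan is to deduce Theorem~\ref{LpCase2Thm} from Theorem~\ref{LpCase1Thm} by the Kelvin transform $x\mapsto y=x/|x|^2$, exactly as Theorem~\ref{lemma 2} was deduced from Theorem~\ref{lemma 1} in the $L^2$ case. The only change is that the exponents must now be tracked in $L^p$. I will apply Theorem~\ref{LpCase1Thm} with auxiliary parameters $(m,n)$ satisfying $1+n-m>0$ and $n\le \frac{N-p}{p}$:
\[
\int_{\mathbb{R}^N} |\nabla v|^p \frac{e^{-\frac{p|x|^{1+n-m}}{1+n-m}}}{|x|^{pn}}\,dx \ge C_{PI}(m,n,p,N)\inf_{c}\int_{\mathbb{R}^N}|v-c|^p\frac{e^{-\frac{p|x|^{1+n-m}}{1+n-m}}}{|x|^{1+n+(p-1)m}}\,dx .
\]
Then I substitute $x=y/|y|^2$, which gives $dx=|y|^{-2N}dy$ and $|x|=|y|^{-1}$. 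Setting $w(y)=v(y/|y|^2)$, the fact that the Kelvin map is conformal gives $|\nabla_y w(y)|^p=|y|^{-2p}|\nabla v(y/|y|^2)|^p$.

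Next I collect the powers. The left side becomes
\[
\int |\nabla w|^p\, e^{-\frac{p|y|^{-(1+n-m)}}{1+n-m}}\,|y|^{2p+pn-2N}\,dy,
\]
and the right side carries the weight $|y|^{1+n+(p-1)m-2N}$ with the same exponential. I choose $(m,n)$ so that these match the target weights:
\begin{itemize}
\item $pb=2N-2p-pn$, that is, $n=\frac{2}{p}N-2-b$;
\item $1+b-a=-(1+n-m)$, so that $-\frac{p|y|^{-(1+n-m)}}{1+n-m}=\frac{p|y|^{1+b-a}}{1+b-a}$; this gives $m=\frac{2}{p}N-a$.
\end{itemize}
With these values, a direct check shows $1+n+(p-1)m-2N=-(1+b+(p-1)a)$. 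Indeed, the sum of the two sides equals $2+(n+b)+(p-1)(m+a)-2N=\frac{2N}{p}+(p-1)\frac{2N}{p}-2N=0$. So the right side is exactly $\int |w-c|^p e^{\frac{p|y|^{1+b-a}}{1+b-a}}|y|^{-(1+b+(p-1)a)}dy$, and the constant comes out as $C_{PI}(\frac{2}{p}N-a,\frac{2}{p}N-2-b,p,N)$, as claimed.

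It remains to check that the hypotheses transfer. First, $1+n-m=-(1+b-a)>0$ because $1+b-a<0$. Second, $n\le\frac{N-p}{p}$ is equivalent to $\frac{2N}{p}-2-b\le \frac{N}{p}-1$, which is precisely $b\ge\frac{N-p}{p}$.

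The one genuinely delicate point is the function class. Starting from a target function $w\in C_0^\infty(\mathbb{R}^N)$, the pulled-back function $v(x)=w(x/|x|^2)$ is not in $C_0^\infty(\mathbb{R}^N)$. It vanishes near the origin but equals the constant $w(0)$ near infinity, so it is not compactly supported. Both sides of the Poincar\'e inequality are invariant under $v\mapsto v-\text{const}$, because the infimum over $c$ absorbs the shift. Hence I replace $v$ by $v-w(0)$, which lies in $C_0^\infty(\mathbb{R}^N\setminus\{0\})\subset C_0^\infty(\mathbb{R}^N)$, and apply Theorem~\ref{LpCase1Thm} to it. I also need to confirm that every integral is finite, so that the change of variables is legitimate. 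The only possible concern is the behaviour of $w$ near $y=0$, where the exponential weight $e^{p|y|^{1+b-a}/(1+b-a)}$ with $1+b-a<0$ decays extremely fast. Given this decay, I expect finiteness to be routine. Should the target class be larger, a standard density argument finishes the proof.
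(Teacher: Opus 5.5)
Your argument is the paper's own proof. You apply the Kelvin transform to Theorem~\ref{LpCase1Thm}, identify $n=\frac{2}{p}N-2-b$ and $m=\frac{2}{p}N-a$, and check that $1+n-m=-(1+b-a)>0$ and that $n\le\frac{N-p}{p}\iff b\ge\frac{N-p}{p}$; your exponent bookkeeping is correct.

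The only thing to fix is your function-class paragraph, a point the paper does not address at all. For $w\in C_0^\infty(\mathbb{R}^N)$, the pull-back $v(x)=w(x/|x|^2)$ vanishes near $x=0$, so $v-w(0)\equiv -w(0)$ there. Near infinity $v$ is not constant: one only has $v(x)=w(0)+O(|x|^{-1})$. So $v-w(0)$ is in general neither compactly supported nor zero near the origin, and the claimed membership in $C_0^\infty(\mathbb{R}^N\setminus\{0\})$ is false.

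A correct reduction cuts off at the origin on the $y$-side. Put $w_\varepsilon=\eta(y/\varepsilon)\,w$ with $\eta\equiv 0$ on $B_1$ and $\eta\equiv 1$ outside $B_2$. The Kelvin pull-back of $w_\varepsilon$ does lie in $C_0^\infty(\mathbb{R}^N\setminus\{0\})$, so your computation applies to it verbatim. As $\varepsilon\to0$, the extra gradient term is bounded by $C\varepsilon^{-p}\|w\|_\infty^p\int_{B_{2\varepsilon}\setminus B_\varepsilon}e^{\frac{p|y|^{1+b-a}}{1+b-a}}|y|^{-pb}\,dy$, which tends to $0$ by the super-exponential decay of the weight at the origin (the fact you already noticed). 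The right-hand side also converges: $d\nu=e^{\frac{p|y|^{1+b-a}}{1+b-a}}|y|^{-(1+b+(p-1)a)}\,dy$ is a finite measure (note $1+b+(p-1)a>p(1+b)\ge N$), and $\bigl|\inf_c\|f-c\|_{L^p(\nu)}-\inf_c\|g-c\|_{L^p(\nu)}\bigr|\le\|f-g\|_{L^p(\nu)}$.
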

\begin{proof}
    From Theorem \ref{LpCase1Thm}, with $1+n-m>0$ and $n \leq\frac{N-p}{p}$,
    $$\int_{\mathbb{R}^N} |\nabla_x v(x)|^p \frac{e^{-\frac{p|x|^{1+n-m}}{1+n-m}}}{|x|^{pn}} \, dx \geq C_{PI}(m,n,p,N) \inf_{c \in \mathbb{R}} \int_{\mathbb{R}^N} |v(x)-c|^p \frac{e^{-\frac{p|x|^{1+n-m}}{1+n-m}}}{|x|^{1+n+(p-1)m}}  \, dx.$$
    By changing variable $x \mapsto \frac{y}{|y|^2}$, we have $y=\frac{x}{|x|^2}$, $dx=\frac{dy}{|y|^{2N}}$ and
    $$\left|\nabla_x v\left(\frac{y}{|y|^2}\right)\right|^p=\left|(\nabla v)\left(\frac{y}{|y|^2}\right)\right|^p.$$
    Then,
    \begin{align*}
        \int_{\mathbb{R}^N} |\nabla_x v(x)|^p \frac{e^{-\frac{p|x|^{1+n-m}}{1+n-m}}}{|x|^{pn}} \, dx&=\int_{\mathbb{R}^N} \left|(\nabla v)\left(\frac{y}{|y|^2}\right)\right|^p \frac{e^{-\frac{p|y|^{-(1+n-m)}}{1+n-m}}}{|y|^{2N-pn}} \, dy\\
        &\geq C_{PI}(m,n,p,N) \inf_{c \in \mathbb{R}} \int_{\mathbb{R}^N} \left|v\left(\frac{y}{|y|^2}\right)-c\right|^p \frac{e^{-\frac{p|y|^{-(1+n-m)}}{1+n-m}}}{|y|^{2N-(1+n+(p-1)m)}}  \, dy.
    \end{align*}
    Define $w(y)=v\left(\frac{y}{|y|^2}\right)$. Thus, 
    $$|\nabla_y w(y)|^p=\dfrac{1}{|y|^{2p}}\left|(\nabla v)\left(\frac{y}{|y|}\right)\right|^p,$$
    which implies that
    $$\int_{\mathbb{R}^N} |\nabla_y w(y)|^p \frac{e^{-\frac{p|y|^{-(1+n-m)}}{1+n-m}}}{|y|^{2N-pn-2p}} \, dy \geq C_{PI}(m,n,p,N) \inf_{c \in \mathbb{R}} \int_{\mathbb{R}^N} \left|w(y)-c\right|^p \frac{e^{-\frac{p|y|^{-(1+n-m)}}{1+n-m}}}{|y|^{2N-(1+n+(p-1)m)}}  \, dy.$$
    Setting $pb:=2N-pn-2p$, and $a:=\frac{2}{p}N-m$, then $b=\frac{2}{p}N-n-2 \geq \frac{N-p}{p}$ and $1+b-a=m-n-1<0$. As a result,
    $$\int_{\mathbb{R}^N} |\nabla_y w(y)|^p \frac{e^{\frac{p|y|^{1+b-a}}{1+b-a}}}{|y|^{pb}} \, dy \geq C_{PI}\left(\frac{2}{p}N-a, \frac{2}{p}N-2-b, p, N\right) \inf_{c \in \mathbb{R}} \int_{\mathbb{R}^N} \left|w(y)-c\right|^p\frac{e^{\frac{p|y|^{1+b-a}}{1+b-a}}}{|y|^{(p-1)a+b+1}}  \, dy,$$
    for all $b \geq\frac{N-p}{p}$ and $1+b-a<0$, as desired.
\end{proof}

\begin{theorem}\label{LpCase3Thm}
    Assume $1 + b - a < 0$ and $b \leq \frac{N-p}{p}$ with $p >1$, we have
    \begin{equation}
        \int_{\mathbb{R}^N} |\nabla v|^p \frac{e^{\frac{p|x|^{1+b-a}}{1+b-a}}}{|x|^{2N-pb-2p}} \, dx \geq C_{3,PI}(a,b,p,N) \inf_{c \in \mathbb{R}} \int_{\mathbb{R}^N} |v-c|^p \frac{e^{\frac{p|x|^{1+b-a}}{1+b-a}}}{|x|^{2N+(p-1)a+(1-2p)b-2p+1}}  \, dx,
    \end{equation}
    where $C_{3,PI}(a,b,p,N)=C_{PI}(-a+2b+2,b,p,N).$
\end{theorem}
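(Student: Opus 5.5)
The plan is to deduce this statement from Theorem \ref{LpCase2Thm} by a pure relabeling of the parameters. No new analysis is needed, exactly as Theorem \ref{lemma 1.3} was obtained from Theorem \ref{lemma 2} in the $L^2$ setting. Theorem \ref{LpCase2Thm} says that for parameters $(m,n)$ with $1+n-m<0$ and $n\ge \frac{N-p}{p}$,
\[
\int_{\mathbb{R}^N} |\nabla v|^p \frac{e^{\frac{p|x|^{1+n-m}}{1+n-m}}}{|x|^{pn}}\,dx \ge C_{PI}\Big(\tfrac{2}{p}N-m,\tfrac{2}{p}N-2-n,p,N\Big)\inf_{c\in\mathbb{R}}\int_{\mathbb{R}^N}|v-c|^p\frac{e^{\frac{p|x|^{1+n-m}}{1+n-m}}}{|x|^{1+n+(p-1)m}}\,dx .
\]
Given $(a,b)$ with $1+b-a<0$ and $b\le\frac{N-p}{p}$, I will choose $(m,n)$ by matching the gradient weight and the exponent in the Gaussian factor:
\[
pn = 2N-pb-2p, \qquad 1+n-m = 1+b-a.
\]
Solving gives $n=\frac{2}{p}N-b-2$ and $m=a+\frac{2}{p}N-2b-2$.

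Next I check the hypotheses of Theorem \ref{LpCase2Thm}. First, $1+n-m=1+b-a<0$ holds by assumption. Second, $n\ge \frac{N-p}{p}$ is equivalent to $\frac{2N}{p}-b-2\ge \frac{N}{p}-1$, that is, to $b\le\frac{N-p}{p}$, which is also assumed. The left-hand weight is $|x|^{-pn}=|x|^{-(2N-pb-2p)}$ by construction.

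For the right-hand weight, a direct computation gives
\[
1+n+(p-1)m = 2N+(p-1)a+(1-2p)b-2p+1,
\]
which is exactly the exponent in the statement. For the constant,
\[
C_{2,PI}(m,n,p,N)=C_{PI}\Big(\tfrac{2}{p}N-m,\tfrac{2}{p}N-2-n,p,N\Big)=C_{PI}(2b+2-a,\,b,\,p,N),
\]
since $\tfrac{2}{p}N-m=2b+2-a$ and $\tfrac{2}{p}N-2-n=b$. This is the claimed $C_{3,PI}(a,b,p,N)$.

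The only real risk in this argument is arithmetic: the weight exponent $1+n+(p-1)m$ must reproduce $2N+(p-1)a+(1-2p)b-2p+1$. I will expand it term by term: the $\frac{2N}{p}$ contributions combine to $2N$, the $b$ terms give $-b-2(p-1)b=(1-2p)b$, and the constants give $1-2-2(p-1)=1-2p$. No density or regularity issues arise, because the statement is literally Theorem \ref{LpCase2Thm} with renamed parameters and the same class of test functions.
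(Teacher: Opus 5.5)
Your proposal is correct and follows the paper's own argument: it applies Theorem \ref{LpCase2Thm} with the same relabeling $pn=2N-pb-2p$, $1+n-m=1+b-a$. Your arithmetic is right, namely $n\ge\frac{N-p}{p}\iff b\le\frac{N-p}{p}$, $1+n+(p-1)m=2N+(p-1)a+(1-2p)b-2p+1$, and $C_{2,PI}(m,n,p,N)=C_{PI}(2b+2-a,b,p,N)$, and you spell these checks out more explicitly than the paper does.
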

\begin{proof}
    From Theorem \ref{LpCase2Thm}, we get for $1 + n - m < 0$ and $n \geq \frac{N-p}{p}$
    \begin{equation}
        \int_{\mathbb{R}^N} |\nabla v|^p \frac{e^{\frac{p|x|^{1+n-m}}{1+n-m}}}{|x|^{pn}} \, dx \geq C_{2,PI}(m,n,p,N) \inf_{c \in \mathbb{R}} \int_{\mathbb{R}^N} |v-c|^p \frac{e^{\frac{p|x|^{1+n-m}}{1+n-m}}}{|x|^{1+n+(p-1)m}}  \, dx.
    \end{equation}
    By choosing $a, b$ such that $2N-pb-2p=pn$ and $1+b-a=1+n-m$, we have $b=\frac{2}{p}N-2-n \leq\frac{N-p}{p}$ and $1+b-a<0$. Hence,
    \begin{align*}
        &\int_{\mathbb{R}^N} |\nabla v|^p \frac{e^{\frac{p|x|^{1+n-m}}{1+n-m}}}{|x|^{2N-pb-2p}} \, dx\\
        &\geq C_{2,PI}\left(\frac{2}{p}N+a-2b-2, \frac{2}{p}N-b-2,p,N\right) \inf_{c \in \mathbb{R}} \int_{\mathbb{R}^N} |v-c|^p \frac{e^{\frac{p|x|^{1+b-a}}{1+b-a}}}{|x|^{2N+(p-1)a+(1-2p)b+1-2p}}  \, dx,
    \end{align*}
    which implies that $C_{3,PI}(a,b,p,N)=C_{PI}(-a+2b+2,b,p,N)$.
\end{proof}

\begin{theorem}\label{LpCase4Thm}
    Assume $1 + b - a > 0$ and $b \geq \frac{N-p}{p}$ with $p >1$, we have
    \begin{equation}
        \int_{\mathbb{R}^N} |\nabla v|^p \frac{e^{-\frac{p|x|^{1+b-a}}{1+b-a}}}{|x|^{2N-pb-2p}} \, dx \geq C_{4,PI}(a,b,p,N) \inf_{c \in \mathbb{R}} \int_{\mathbb{R}^N} |v-c|^p \frac{e^{-\frac{p|x|^{1+b-a}}{1+b-a}}}{|x|^{2N+(p-1)a+(1-2p)b-2p+1}}  \, dx,
    \end{equation}
    where $C_{4,PI}(a,b,p,N)=C_{PI}\left(\frac{2}{p}N+a-2b-2, \frac{2}{p}N-b-2,p,N\right)$.
\end{theorem}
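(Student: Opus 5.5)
This case follows the pattern of the proof of Theorem~\ref{LpCase3Thm}, but the starting inequality is Theorem~\ref{LpCase1Thm} rather than Theorem~\ref{LpCase2Thm}. Fix $(a,b)$ with $1+b-a>0$ and $b\ge \frac{N-p}{p}$. Define new parameters $(m,n)$ by
\[
pn = 2N-pb-2p \quad\text{and}\quad 1+n-m = 1+b-a .
\]
This gives $n=\frac{2}{p}N-b-2$ and $m=\frac{2}{p}N+a-2b-2$. The first task is to check the hypotheses of Theorem~\ref{LpCase1Thm} for $(m,n)$:
\begin{itemize}
\item $1+n-m=1+b-a>0$ holds by assumption.
\item $n\le \frac{N-p}{p}$ is equivalent to $\frac{2}{p}N-b-2\le \frac{N}{p}-1$, that is, to $b\ge \frac{N-p}{p}$, which holds by assumption.
\end{itemize}
So Theorem~\ref{LpCase1Thm} applies with $(m,n)$. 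Its exponential factor is $e^{-\frac{p|x|^{1+n-m}}{1+n-m}}=e^{-\frac{p|x|^{1+b-a}}{1+b-a}}$, which is exactly the factor in the claim, and its gradient weight is $|x|^{-pn}=|x|^{-(2N-pb-2p)}$, which is also the claimed one.

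The next task is to match the weight on $|v-c|^p$. Theorem~\ref{LpCase1Thm} produces $|x|^{-(1+n+(p-1)m)}$, so I would compute
\[
1+n+(p-1)m = 1+\tfrac{2}{p}N-b-2+(p-1)\bigl(\tfrac{2}{p}N+a-2b-2\bigr) = 2N+(p-1)a+(1-2p)b-2p+1 ,
\]
since the $N$-terms give $\frac{2}{p}N(1+p-1)=2N$, the $b$-terms give $-b-2(p-1)b=(1-2p)b$, and the constants give $1-2-2(p-1)=1-2p$. This is exactly the exponent in the statement. With this, the claimed inequality is literally the instance $(m,n)$ of Theorem~\ref{LpCase1Thm}, and we get $C_{4,PI}(a,b,p,N)=C_{PI}\left(\frac{2}{p}N+a-2b-2,\frac{2}{p}N-b-2,p,N\right)$.

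I do not expect a real obstacle. The only points to be careful about are the bookkeeping of the exponent identity and the direction of the inequality when verifying $n\le\frac{N-p}{p}$. No Kelvin transform is needed here, because both inequalities decay at infinity through the same Gaussian-type factor; the Kelvin transform was needed only to pass from case (i) to cases (ii) and (iii).
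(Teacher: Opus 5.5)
Your proposal is correct and follows the paper's proof: the paper also sets $pn=2N-pb-2p$ and $1+n-m=1+b-a$, checks $1+n-m>0$ and $n\le \frac{N-p}{p}$, and reads off the claim as an instance of Theorem~\ref{LpCase1Thm} without any Kelvin transform. Your explicit computation that $1+n+(p-1)m=2N+(p-1)a+(1-2p)b-2p+1$ fills in a step that the paper states without proof.
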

\begin{proof}
    From Theorem \ref{LpCase1Thm}, we get for $1 + n - m > 0$ and $n \leq \frac{N-p}{p}$
    \begin{equation}
        \int_{\mathbb{R}^N} |\nabla v|^p \frac{e^{-\frac{p|x|^{1+n-m}}{1+n-m}}}{|x|^{pn}} \, dx \geq C_{PI}(m,n,p,N) \inf_{c \in \mathbb{R}} \int_{\mathbb{R}^N} |v-c|^p \frac{e^{-\frac{p|x|^{1+n-m}}{1+n-m}}}{|x|^{1+n+(p-1)m}}  \, dx.
    \end{equation}
    By choosing $a, b$ such that $2N-pb-2p=pn$ and $1+b-a=1+n-m$, we have $b=\frac{2}{p}N-2-n \geq\frac{N-p}{p}$ and $1+b-a>0$. Hence,
    \begin{align*}
        &\int_{\mathbb{R}^N} |\nabla v|^p \frac{e^{-\frac{p|x|^{1+b-a}}{1+b-a}}}{|x|^{2N-pb-2p}} \, dx\\
        &\geq C_{PI}\left(\frac{2}{p}N+a-2b-2, \frac{2}{p}N-b-2,p,N\right) \inf_{c \in \mathbb{R}} \int_{\mathbb{R}^N} |v-c|^p \frac{e^{-\frac{p|x|^{1+b-a}}{1+b-a}}}{|x|^{2N+(p-1)a+(1-2p)b+1-2p}}  \, dx,
    \end{align*}
    which implies that $C_{4,PI}(a,b,p, N)=C_{PI}\left(\frac{2}{p}N+a-2b-2, \frac{2}{p}N-b-2,p,N\right) $.
\end{proof}

\section{Stability estimates of the $L^p$-CKN inequalities}
In this section, we denote
$$\mathcal{R}_p(a,b)=|b|^p+(p-1)|a|^p-p|a|^{p-2}a\cdot b,$$
for all $p>1$, $a, b \in \mathbb{R}^N$ with $N \geq 1$.
From \cite[Theorem $4.2$]{DFLL23}, we have
\begin{theorem}
Let $b+1-a>0$ and $b\leq\dfrac{N-p}{p}$ with $p \geq 2$. Then for $u\in C_{0}^{\infty}\left(
\mathbb{R}^{N}\setminus\left\{  0\right\}  \right)  :$
\begin{align*}
&  \int_{\mathbb{R}^{N}}\dfrac{|\nabla u|^{p}}{|x|^{pb}}dx+(p-1)\int_{\mathbb{R}%
^{N}}\dfrac{|u|^{2}}{|x|^{2a}}dx-\left(  N-1-(p-1)a-b\right)  \int_{\mathbb{R}^{N}%
}\dfrac{|u|^{p}}{|x|^{(p-1)a+b+1}}dx\\
& \geq M_pC_{PI}(a,b,p,N)\inf_{c \in \mathbb{R}}\int_{\mathbb{R}^N}\left|u-ce^{-\frac{|x|^{1+b-a}}{1+b-a}}\right|^p\dfrac{dx}{|x|^{1+b+(p-1)a}}.
\end{align*}

Also, for $u\in C_{0}^{\infty}\left(  \mathbb{R}^{N}\setminus\left\{
0\right\}  \right)  \setminus\left\{  0\right\} $: 
\begin{align*}
&  \left(  \int_{\mathbb{R}^{N}}\dfrac{|\nabla u|^{p}}{|x|^{pb}}dx\right)
^{\frac{1}{p}}\left(  \int_{\mathbb{R}^{N}}\dfrac{|u|^{p}}{|x|^{pa}}dx\right)
^{\frac{p-1}{p}}-\left( \dfrac{N-(p-1)a-b-1}{p}\right) \left(  \int
_{\mathbb{R}^{N}}\dfrac{|u|^{p}}{|x|^{(p-1)a+b+1}}dx\right) \nonumber\\
&\geq \dfrac{M_p}{p}C_{PI}(a,b,p,N)\inf_{c \in \mathbb{R}, \lambda>0}\displaystyle\int_{\mathbb{R}^N}\dfrac{\left|u-ce^{-\frac{\lambda|x|^{b+1-a}}{(b+1-a)}}\right|^p}{|x|^{1+b+(p-1)a}}dx,
\end{align*}
where $M_p\in (0, 1]$ is defined in \cite[Lemma $1.1$]{DFLL23}.
\end{theorem}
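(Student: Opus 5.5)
We would follow the $L^2$ template of Section 3, with the square completion of Theorem~\ref{L2CKNI} replaced by its $L^p$ analogue from \cite[Theorem 4.2]{DFLL23}, and with Theorem~\ref{TPoincare} replaced by the weighted $L^p$-Poincar\'e inequality of Theorem~\ref{LpCase1Thm}.

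Fix $u\in C_0^\infty(\mathbb{R}^N\setminus\{0\})$. Take the vector field $\mathbf{X}=-|x|^{b-a}\frac{x}{|x|}$ and the weight $A=|x|^{-pb}$. Write $u=e^{-\frac{|x|^{1+b-a}}{1+b-a}}\,v$, with $v\in C_0^\infty(\mathbb{R}^N\setminus\{0\})$.

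\textbf{Step 1 (exact deficit identity).} The $L^p$ identity in \cite{DFLL23} is obtained by integrating by parts with $\mathrm{div}(A|\mathbf{X}|^{p-2}\mathbf{X})$. It expresses the left-hand side of the first inequality as
\[
\int_{\mathbb{R}^N}\mathcal{R}_p\bigl(\nabla u,\,-|x|^{b-a}\tfrac{x}{|x|}u\bigr)\,\frac{dx}{|x|^{pb}}.
\]
Here we use that $-\mathrm{div}(|x|^{-pb}|x|^{(p-1)(b-a)}\tfrac{x}{|x|})$ produces exactly $(N-1-(p-1)a-b)|x|^{-(p-1)a-b-1}$. The same computation turns the middle term into $(p-1)\int|u|^p|x|^{-pa}$; the ``$|u|^2/|x|^{2a}$'' printed in the statement should be read this way.

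Since $\nabla u+|x|^{b-a}\frac{x}{|x|}u=e^{-\frac{|x|^{1+b-a}}{1+b-a}}\nabla v$, the difference of the two arguments of $\mathcal{R}_p$ is exactly this gradient term.

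\textbf{Step 2 (lower bound on $\mathcal{R}_p$).} For $p\ge2$, \cite[Lemma 1.1]{DFLL23} gives $\mathcal{R}_p(\xi,\eta)\ge M_p|\xi-\eta|^p$. Applied here, the deficit is at least
\[
M_p\int_{\mathbb{R}^N}|\nabla v|^p\,\frac{e^{-\frac{p|x|^{1+b-a}}{1+b-a}}}{|x|^{pb}}\,dx.
\]
This step is where $p\ge2$ is needed.

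\textbf{Step 3 (weighted Poincar\'e).} Apply Theorem~\ref{LpCase1Thm} to $v$. Its hypotheses $1+b-a>0$ and $b\le\frac{N-p}{p}$ are exactly ours. This bounds the quantity in Step 2 below by
\[
M_pC_{PI}(a,b,p,N)\inf_{c\in\mathbb{R}}\int_{\mathbb{R}^N}|v-c|^p\frac{e^{-\frac{p|x|^{1+b-a}}{1+b-a}}}{|x|^{1+b+(p-1)a}}\,dx.
\]
Since $|v-c|^pe^{-p\frac{|x|^{1+b-a}}{1+b-a}}=\bigl|u-ce^{-\frac{|x|^{1+b-a}}{1+b-a}}\bigr|^p$, this is the first claimed inequality.

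\textbf{Step 4 (product form by scaling).} For the second inequality, rescale. Apply Steps 1–3 with $\mathbf{X}$ replaced by $-\lambda^{-(1+b-a)}|x|^{b-a}\frac{x}{|x|}$, equivalently use the dilated form of Theorem~\ref{LpCase1Thm}. This gives, for every $\mu>0$,
\[
\mu^{p}\int\frac{|\nabla u|^p}{|x|^{pb}}+(p-1)\mu^{-p/(p-1)}\cdot\mu^{\,p/(p-1)}\cdots\ \ge\ \dots
\]
More concretely, the deficit identity scales as
\[
t^{p-1}\|\nabla u\|^p+(p-1)t^{-1}\|u\|_{pa}^p-p\,C\,\|u\|^p_{\ast}\ \ge\ M_pC_{PI}\inf_c\int\bigl|u-ce^{-\frac{t'|x|^{1+b-a}}{1+b-a}}\bigr|^p\frac{dx}{|x|^{1+b+(p-1)a}},
\]
where $C=\frac{N-1-(p-1)a-b}{p}$ and $t$ is the scaling parameter. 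The constant $C_{PI}$ is unchanged after weight homogeneity is accounted for, which mirrors the $\lambda^{1+b-a}$ factor in the $L^2$ proof.

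Choose $t$ so that Young's inequality $\frac{1}{p}X^p+\frac{p-1}{p}Y^{p/(p-1)}\ge XY$ is an equality, i.e.\ so that $t^{p-1}\|\nabla u\|^p=t^{-1}\|u\|_{pa}^p$. Dividing by $p$ then yields the product deficit. Taking the infimum over $\lambda>0$ gives the constant $\frac{M_p}{p}C_{PI}$.

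The main obstacle is the bookkeeping in Step 4. One must verify that, after the dilation $x\mapsto sx$, the weighted Poincar\'e constant is invariant while both sides of the deficit scale with matching powers. This uses the homogeneity relation $(p-1)a+b+1$ versus $pb$ and $pa$, exactly as in the $L^2$ case. The pointwise inequality of Step 2 itself is quoted from \cite{DFLL23}.
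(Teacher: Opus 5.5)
Your proposal is correct and is essentially the paper's proof. The paper writes the deficit as $\int_{\mathbb{R}^N}|x|^{-pb}\mathcal{R}_p(-u|x|^{b-1-a}x,\nabla u)\,dx$ and bounds it below by $M_p\int_{\mathbb{R}^N}|x|^{-pb}\bigl|\nabla\bigl(ue^{\frac{|x|^{1+b-a}}{1+b-a}}\bigr)\bigr|^pe^{-\frac{p|x|^{1+b-a}}{1+b-a}}\,dx$. It then applies Theorem~\ref{LpCase1Thm}, and gets the product form from the dilated Poincar\'e inequality with $\lambda^{1+b-a}=\bigl(\int_{\mathbb{R}^N}|u|^p|x|^{-pa}dx\big/\int_{\mathbb{R}^N}|\nabla u|^p|x|^{-pb}dx\bigr)^{1/p}$, which is exactly your Young-equality choice of $t$.

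Your reading of the middle term as $(p-1)\int_{\mathbb{R}^N}|u|^p|x|^{-pa}\,dx$ is the one the $\mathcal{R}_p$ identity actually produces. One small correction: with the paper's convention $\mathcal{R}_p(a,b)=|b|^p+(p-1)|a|^p-p|a|^{p-2}a\cdot b$, the vector-field term $-u|x|^{b-1-a}x$ goes in the first slot, not the second as you wrote.
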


\begin{proof}
The first assertion is the direct consequence of \cite[[Theorem $4.2$]{DFLL23} and Theorem \ref{LpCase1Thm}. Indeed,
\begin{align*}
&  \int_{\mathbb{R}^{N}}\dfrac{|\nabla u|^{p}}{|x|^{pb}}dx+(p-1)\int_{\mathbb{R}%
^{N}}\dfrac{|u|^{2}}{|x|^{2a}}dx-\left(  N-1-(p-1)a-b\right)  \int_{\mathbb{R}^{N}%
}\dfrac{|u|^{p}}{|x|^{(p-1)a+b+1}}dx\\
&=\int_{\mathbb{R}^N}\dfrac{1}{|x|^{pb}}\mathcal{R}_p(-u|x|^{b-1-a}x, \nabla u)dx\\
&\geq M_p \int_{\mathbb{R}^N}\dfrac{1}{|x|^{pb}}\left|\nabla u+u|x|^{b-1-a}x\right|^pdx\\
&=M_p \int_{\mathbb{R}^N}\dfrac{1}{|x|^{pb}}\left|\nabla \left(ue^{\frac{|x|^{b+1-a}}{b+1-a}}\right)\right|^pe^{-p\frac{|x|^{b+1-a}}{b+1-a}}dx\\
& \geq M_pC_{PI}(a,b,p,N)\inf_{c \in \mathbb{R}}\int_{\mathbb{R}^N}\left|u-ce^{-\frac{|x|^{1+b-a}}{1+b-a}}\right|^p\dfrac{dx}{|x|^{1+b+(p-1)a}}.
\end{align*}
For the second one, by applying a standard scaling argument to Theorem \ref{LpCase1Thm}, for any $\lambda>0$, we have
$$\lambda^{(p-1)(1+b-a)}\int_{\mathbb{R}^N} |\nabla v|^p \frac{e^{-\frac{p|x|^{1+b-a}}{(1+b-a)\lambda^{1+b-a}}}}{|x|^{pb}} \, dx \geq C_{PI}(a,b,p,N) \inf_{c \in \mathbb{R}} \int_{\mathbb{R}^N} |v-c|^p \frac{e^{-\frac{p|x|^{1+b-a}}{(1+b-a)\lambda^{1+b-a}}}}{|x|^{1+b+(p-1)a}}  \, dx.$$
Using the scaling argument again, with $\lambda=\left(\dfrac{\int_{\mathbb{R}^N}|u|^p/|x|^{pa}dx}{\int_{\mathbb{R}^N}|\nabla u|^p/|x|^{pb}dx}\right)^{1/(p(b+1-a))}$, we get
\begin{align*}
&  \left(  \int_{\mathbb{R}^{N}}\dfrac{|\nabla u|^{p}}{|x|^{pb}}dx\right)
^{\frac{1}{p}}\left(  \int_{\mathbb{R}^{N}}\dfrac{|u|^{p}}{|x|^{pa}}dx\right)
^{\frac{p-1}{p}}-\left( \dfrac{N-(p-1)a-b-1}{p}\right) \left(  \int
_{\mathbb{R}^{N}}\dfrac{|u|^{p}}{|x|^{(p-1)a+b+1}}dx\right) \nonumber\\
&  =\frac{1}{p}\lambda^{(p-1)(b-a+1)}\int_{\mathbb{R}^{N}}\frac{1}{|x|^{pb}%
}\mathcal{R}_p(-\lambda^{a-b-1}u|x|^{b-a-1}x, \nabla u)dx\\
&  \geq\frac{M_p}{p}\lambda^{(p-1)(b-a+1)}\int_{\mathbb{R}^{N}}\frac{1}{|x|^{pb}%
}\left\vert \nabla u+\lambda^{a-b-1}u|x|^{b-a-1}x  \right\vert ^{p}dx\\
&  =\frac{M_p}{p}\lambda^{(p-1)(b-a+1)}\int_{\mathbb{R}^{N}}\frac{1}{|x|^{pb}%
}\left\vert \nabla\left(  ue^{\frac{|x|^{b+1-a}}{\left(  b+1-a\right)
\lambda^{b-a+1}}}\right)  \right\vert ^{p}e^{-\frac{p|x|^{b+1-a}}{\left(
b+1-a\right)  \lambda^{b-a+1}}}dx\\
&\geq \frac{M_p}{p}C_{PI}(a,b,p,N)\inf_{c \in \mathbb{R}}\displaystyle\int_{\mathbb{R}^N}\dfrac{\left|u-ce^{-\frac{|x|^{b+1-a}}{(b+1-a)\lambda^{b-a+1}}}\right|^p}{|x|^{1+b+(p-1)a}}dx\\
&\geq \frac{M_p}{p}C_{PI}(a,b,p,N)\inf_{c \in \mathbb{R}, \lambda>0}\displaystyle\int_{\mathbb{R}^N}\dfrac{\left|u-ce^{-\frac{\lambda|x|^{b+1-a}}{(b+1-a)}}\right|^p}{|x|^{1+b+(p-1)a}}dx.
\end{align*}
\end{proof}

Similarly, from \cite[Theorem $4.3$]{DFLL23}, we have
\begin{theorem}
Let $b+1-a<0$ and $b\geq\dfrac{N-p}{p}$ with $p \geq 2$. Then for $u\in C_{0}^{\infty}\left(
\mathbb{R}^{N}\setminus\left\{  0\right\}  \right)  :$
\begin{align*}
&  \int_{\mathbb{R}^{N}}\dfrac{|\nabla u|^{p}}{|x|^{pb}}dx+(p-1)\int_{\mathbb{R}%
^{N}}\dfrac{|u|^{2}}{|x|^{2a}}dx-\left(  1+(p-1)a+b-N\right)  \int_{\mathbb{R}^{N}%
}\dfrac{|u|^{p}}{|x|^{(p-1)a+b+1}}dx\\
& \geq M_pC_{2,PI}(a,b,p,N)\inf_{c \in \mathbb{R}}\int_{\mathbb{R}^N}\left|u-ce^{\frac{|x|^{1+b-a}}{1+b-a}}\right|^p\dfrac{dx}{|x|^{1+b+(p-1)a}}.
\end{align*}

Also, for $u\in C_{0}^{\infty}\left(  \mathbb{R}^{N}\setminus\left\{
0\right\}  \right)  \setminus\left\{  0\right\} $: 
\begin{align*}
&  \left(  \int_{\mathbb{R}^{N}}\dfrac{|\nabla u|^{p}}{|x|^{pb}}dx\right)
^{\frac{1}{p}}\left(  \int_{\mathbb{R}^{N}}\dfrac{|u|^{p}}{|x|^{pa}}dx\right)
^{\frac{p-1}{p}}-\left( \dfrac{1+(p-1)a+b-N}{p}\right) \left(  \int
_{\mathbb{R}^{N}}\dfrac{|u|^{p}}{|x|^{(p-1)a+b+1}}dx\right) \nonumber\\
&\geq \dfrac{M_p}{p}C_{2,PI}(a,b,p,N)\inf_{c \in \mathbb{R}, \lambda>0}\displaystyle\int_{\mathbb{R}^N}\dfrac{\left|u-ce^{\frac{\lambda|x|^{b+1-a}}{(b+1-a)}}\right|^p}{|x|^{1+b+(p-1)a}}dx,
\end{align*}
where $M_p\in (0, 1]$ is defined in \cite[Lemma $1.1$]{DFLL23}.
\end{theorem}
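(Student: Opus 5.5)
The argument mirrors the preceding theorem, with Theorem \ref{LpCase2Thm} in place of Theorem \ref{LpCase1Thm} and the opposite sign of the vector field.

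\textbf{Step 1: the identity.} From \cite[Theorem 4.3]{DFLL23}, the deficit on the left of the first inequality equals
\[
\int_{\mathbb{R}^N}\frac{1}{|x|^{pb}}\,\mathcal{R}_p\bigl(u|x|^{b-1-a}x,\nabla u\bigr)\,dx .
\]
This is the $L^p$ analogue of Theorem \ref{L2CKNI} with $\mathbf{X}=|x|^{b-a}\frac{x}{|x|}$. The divergence term is $\operatorname{div}(|x|^{-pb}|x|^{(p-1)(b-a)}\ldots)$, and it produces the coefficient $1+(p-1)a+b-N$. That coefficient is positive because $b\ge \frac{N-p}{p}$ and $a>b+1$.

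\textbf{Step 2: coercivity and the Gaussian Poincar\'e bound.} By \cite[Lemma 1.1]{DFLL23}, for $p\ge2$ we have $\mathcal{R}_p(\xi,\eta)\ge M_p|\eta-\xi|^p$. This bounds the integrand below by $M_p\,|x|^{-pb}\,|\nabla u-u|x|^{b-1-a}x|^p$. Since $\nabla\bigl(e^{-\frac{|x|^{b+1-a}}{b+1-a}}\bigr)=-|x|^{b-1-a}x\,e^{-\frac{|x|^{b+1-a}}{b+1-a}}$, this quantity equals
\[
M_p\int_{\mathbb{R}^N}\frac{1}{|x|^{pb}}\Bigl|\nabla\bigl(ue^{-\frac{|x|^{b+1-a}}{b+1-a}}\bigr)\Bigr|^p e^{\frac{p|x|^{b+1-a}}{b+1-a}}\,dx .
\]
We then apply Theorem \ref{LpCase2Thm} to $v=ue^{-\frac{|x|^{b+1-a}}{b+1-a}}$. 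Since $|v-c|^p e^{\frac{p|x|^{b+1-a}}{b+1-a}}=|u-ce^{\frac{|x|^{b+1-a}}{b+1-a}}|^p$, this gives the first assertion with constant $M_pC_{2,PI}$. One should check that $v$ is an admissible test function. It is compactly supported away from $0$, so Theorem \ref{LpCase2Thm}, which is stated for $C_0^\infty(\mathbb{R}^N)$, applies directly.

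\textbf{Step 3: the product form, via scaling.} First we rescale Theorem \ref{LpCase2Thm}: replacing $x$ by $x/\lambda$ gives, for every $\lambda>0$,
\[
\lambda^{(p-1)(1+b-a)}\int_{\mathbb{R}^N}|\nabla v|^p\,\frac{e^{\frac{p|x|^{1+b-a}}{(1+b-a)\lambda^{1+b-a}}}}{|x|^{pb}}\,dx\ \ge\ C_{2,PI}\inf_c\int_{\mathbb{R}^N}|v-c|^p\,\frac{e^{\frac{p|x|^{1+b-a}}{(1+b-a)\lambda^{1+b-a}}}}{|x|^{1+b+(p-1)a}}\,dx .
\]
The exponents match because the weights are homogeneous, the factor $\lambda^{(p-1)(1+b-a)}$ accounting for the mismatch in homogeneity.

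Next we write the product deficit as $\frac1p\lambda^{(p-1)(b-a+1)}\int |x|^{-pb}\mathcal{R}_p\bigl(\lambda^{a-b-1}u|x|^{b-a-1}x,\nabla u\bigr)\,dx$. This holds for the choice
\[
\lambda=\Bigl(\frac{\int|u|^p|x|^{-pa}}{\int|\nabla u|^p|x|^{-pb}}\Bigr)^{1/(p(b+1-a))}.
\]
With this $\lambda$, the AM--GM step (the weighted Young inequality $\frac1pX+\frac{p-1}{p}Y\ge X^{1/p}Y^{(p-1)/p}$) becomes an equality, exactly as in the previous theorem. We then repeat Step 2 with the scaled field and take the infimum over $\lambda$ to obtain the stated bound with constant $\frac{M_p}{p}C_{2,PI}$.

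\textbf{Main obstacle.} The delicate point is the sign bookkeeping when $b+1-a<0$. The exponent $b+1-a$ is negative, so the power $1/(p(b+1-a))$ and the factor $\lambda^{(p-1)(b-a+1)}$ must be handled carefully. One must also confirm that the identity of Step 1 carries the correct sign of $\mathbf{X}$, so that the remainder is $|\nabla u-\mathbf{X}u|$ rather than $|\nabla u+\mathbf{X}u|$. I would verify this by differentiating the candidate optimizer $e^{\frac{t|x|^{b+1-a}}{b+1-a}}$, $t>0$, and checking that the remainder vanishes on it.
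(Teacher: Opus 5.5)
Your proposal is correct and follows the paper's own proof essentially line by line. It uses the identity from \cite[Theorem 4.3]{DFLL23} with $\mathcal{R}_p(u|x|^{b-1-a}x,\nabla u)$, the bound $\mathcal{R}_p(\xi,\eta)\ge M_p|\eta-\xi|^p$, and the substitution $v=ue^{-\frac{|x|^{b+1-a}}{b+1-a}}$ into Theorem~\ref{LpCase2Thm}; it then uses the rescaled Poincar\'e inequality with the same choice of $\lambda$, which makes the weighted AM--GM step an identity. One small remark: that identity actually produces the term $(p-1)\int_{\mathbb{R}^N}|u|^p|x|^{-pa}\,dx$. So the $(p-1)\int_{\mathbb{R}^N}|u|^2|x|^{-2a}\,dx$ in the statement is a typo, which you, like the paper, implicitly read correctly.
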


\begin{proof}
The first assertion is the direct consequence of \cite[Theorem $4.3$]{DFLL23} and Theorem \ref{LpCase2Thm}. Indeed,
\begin{align*}
&  \int_{\mathbb{R}^{N}}\dfrac{|\nabla u|^{p}}{|x|^{pb}}dx+(p-1)\int_{\mathbb{R}%
^{N}}\dfrac{|u|^{2}}{|x|^{2a}}dx-\left(  1+(p-1)a+b-N\right)  \int_{\mathbb{R}^{N}%
}\dfrac{|u|^{p}}{|x|^{(p-1)a+b+1}}dx\\
&=\int_{\mathbb{R}^N}\dfrac{1}{|x|^{pb}}\mathcal{R}_p(u|x|^{b-1-a}x, \nabla u)dx\\
&\geq M_p \int_{\mathbb{R}^N}\dfrac{1}{|x|^{pb}}\left|\nabla u-u|x|^{b-1-a}x\right|^pdx\\
&=M_p \int_{\mathbb{R}^N}\dfrac{1}{|x|^{pb}}\left|\nabla \left(ue^{-\frac{|x|^{b+1-a}}{b+1-a}}\right)\right|^pe^{p\frac{|x|^{b+1-a}}{b+1-a}}dx\\
& \geq M_pC_{2,PI}(a,b,p,N)\inf_{c \in \mathbb{R}}\int_{\mathbb{R}^N}\left|u-ce^{\frac{|x|^{1+b-a}}{1+b-a}}\right|^p\dfrac{dx}{|x|^{1+b+(p-1)a}}.
\end{align*}
For the second one, by applying a standard scaling argument to Theorem \ref{LpCase2Thm}, for any $\lambda>0$, we have
$$\lambda^{(p-1)(1+b-a)}\int_{\mathbb{R}^N} |\nabla v|^p \frac{e^{\frac{p|x|^{1+b-a}}{(1+b-a)\lambda^{1+b-a}}}}{|x|^{pb}} \, dx \geq C_{\text{2,PI}}(a,b,p,N) \inf_{c \in \mathbb{R}} \int_{\mathbb{R}^N} |v-c|^p \frac{e^{\frac{p|x|^{1+b-a}}{(1+b-a)\lambda^{1+b-a}}}}{|x|^{1+b+(p-1)a}}  \, dx.$$
Using the scaling argument again, with $\lambda=\left(\dfrac{\int_{\mathbb{R}^N}|u|^p/|x|^{pa}dx}{\int_{\mathbb{R}^N}|\nabla u|^p/|x|^{pb}dx}\right)^{1/(p(b+1-a))}$, we get
\begin{align*}
&  \left(  \int_{\mathbb{R}^{N}}\dfrac{|\nabla u|^{p}}{|x|^{pb}}dx\right)
^{\frac{1}{p}}\left(  \int_{\mathbb{R}^{N}}\dfrac{|u|^{p}}{|x|^{pa}}dx\right)
^{\frac{p-1}{p}}-\left( \dfrac{1+(p-1)a+b-N}{p}\right) \left(  \int
_{\mathbb{R}^{N}}\dfrac{|u|^{p}}{|x|^{(p-1)a+b+1}}dx\right) \nonumber\\
&  =\frac{1}{p}\lambda^{(p-1)(b-a+1)}\int_{\mathbb{R}^{N}}\frac{1}{|x|^{pb}%
}\mathcal{R}_p(\lambda^{a-b-1}u|x|^{b-a-1}x, \nabla u)dx\\
&  \geq\frac{M_p}{p}\lambda^{(p-1)(b-a+1)}\int_{\mathbb{R}^{N}}\frac{1}{|x|^{pb}%
}\left\vert \nabla u-\lambda^{a-b-1}u|x|^{b-a-1}x  \right\vert ^{p}dx\\
&  =\frac{M_p}{p}\lambda^{(p-1)(b-a+1)}\int_{\mathbb{R}^{N}}\frac{1}{|x|^{pb}%
}\left\vert \nabla\left(  ue^{-\frac{|x|^{b+1-a}}{\left(  b+1-a\right)
\lambda^{b-a+1}}}\right)  \right\vert ^{p}e^{\frac{p|x|^{b+1-a}}{\left(
b+1-a\right)  \lambda^{b-a+1}}}dx\\
&\geq \frac{M_p}{p}C_{2,PI}(a,b,p,N)\inf_{c \in \mathbb{R}}\displaystyle\int_{\mathbb{R}^N}\dfrac{\left|u-ce^{\frac{|x|^{b+1-a}}{(b+1-a)\lambda^{b-a+1}}}\right|^p}{|x|^{1+b+(p-1)a}}dx\\
&\geq \frac{M_p}{p}C_{2,PI}(a,b,p,N)\inf_{c \in \mathbb{R}, \lambda>0}\displaystyle\int_{\mathbb{R}^N}\dfrac{\left|u-ce^{\frac{\lambda|x|^{b+1-a}}{(b+1-a)}}\right|^p}{|x|^{1+b+(p-1)a}}dx.
\end{align*}
\end{proof}

\noindent\textbf{Acknowledgements.}  A. X. Do and G. Lu were partially supported by grants from
the Simons Foundation. N. Lam was partially supported by an NSERC
Discovery Grant.  V. H. Nguyen was supported by Vietnam National Foundation for Science and Technology Development (NAFOSTED)[101.02-2025.33].

\end{document}